\newcommand{\C}{\ensuremath{\mathbb{C}}}
\newcommand{\R}{\ensuremath{\mathbb{R}}}
\newcommand{\N}{\ensuremath{\mathbb{N}}}
\newcommand{\Z}{\ensuremath{\mathbb{Z}}}
\renewcommand\epsilon\varepsilon
\newcommand{\Aut}{{\rm{Aut}}}
\newcommand{\norm}[1]{\left\lVert#1\right\rVert}
\newcommand\reallywidehat[1]{
	\savestack{\tmpbox}{\stretchto{\scaleto{\scalerel*[\widthof{\ensuremath{#1}}]{\kern-.6pt\bigwedge\kern-.6pt}
    {\rule[-\textheight/2]{1ex}{\textheight}}}{\textheight}}{0.5ex}}\stackon[1pt]{#1}{\tmpbox}}
\theoremstyle{definition}
\newtheorem{thmA}{Theorem}
\newtheorem{thm}{Theorem}[section]
\newtheorem{dfn}[thm]{Definition}
\newtheorem{lem}[thm]{Lemma}
\newtheorem{prp}[thm]{Proposition}
\newtheorem{cor}[thm]{Corollary}
\newtheorem{exm}[thm]{Example}
\newtheorem{rmk}[thm]{Remark}
\author{Dennis Heinig}
\address{\newline Westf\"alische Wilhelms-Universit\"at M\"unster, Mathematisches Institut
\newline Einsteinstra\ss{}e 62, 48149 M\"unster, Germany \newline \newline \normalfont{\texttt{mail.dheinig@gmail.com} \newline \texttt{tim.delaat@uni-muenster.de} \newline \texttt{timo.siebenand@uni-muenster.de}}}
\author{Tim de Laat}
\author{Timo Siebenand}
\title{Group $C^*$-algebras of locally compact groups acting on trees}
\date{}
\begin{document}

\begin{abstract}
We study the group $C^*$-algebras $C^*_{L^{p+}}(G)$ -- constructed from $L^p$-integrability properties of matrix coefficients of unitary representations -- of locally compact groups $G$ acting on (semi-)homogeneous trees of sufficiently large degree. These group $C^*$-algebras lie between the universal and the reduced group $C^*$-algebra. By directly investigating these $L^p$-integrability properties, we first show that for every non-compact, closed subgroup $G$ of the automorphism group $\mathrm{Aut}(T)$ of a (semi-)homogeneous tree $T$ that acts transitively on the boundary $\partial T$ and every $2 \leq q < p \leq \infty$, the canonical quotient map $C^*_{L^{p+}}(G) \twoheadrightarrow C^*_{L^{q+}}(G)$ is not injective. This reproves a result of Samei and Wiersma. We prove that under the additional assumptions that $G$ acts transitively on $T$ and that it has Tits' independence property, the group $C^*$-algebras $C^*_{L^{p+}}(G)$ are the only group $C^*$-algebras coming from $G$-invariant ideals in the Fourier-Stieltjes algebra $B(G)$. Additionally, we show that given a group $G$ as before, every group $C^*$-algebra $C^*_{\mu}(G)$ that is distinguishable (as a group $C^*$-algebra) from the universal group $C^*$-algebra of $G$ and whose dual space $C^*_\mu(G)^*$ is a $G$-invariant ideal in $B(G)$ is abstractly ${}^*$-isomorphic to the reduced group $C^*$-algebra of $G$.
\end{abstract}

\maketitle

\section{Introduction and statement of the main results} \label{sec:introduction}
There are several interesting ways to construct operator algebras from locally compact groups. Arguably the best known are the universal group $C^*$-algebra $C^*(G)$ and the reduced group $C^*$-algebra $C^*_r(G)$ of a locally compact group $G$. There is a canonical surjective ${}^*$-homomorphism $C^*(G)\twoheadrightarrow C^*_r(G)$, which is well known to be a ${}^*$-isomorphism if and only if $G$ is amenable. For non-amenable groups $G$, it is natural to investigate whether there are other interesting group $C^*$-algebras that lie between these two extremes.

For our purposes, a group $C^*$-algebra is a $C^*$-completion $A$ of $C_c(G)$ with surjective ${}^*$-homomorphisms from $C^*(G)$ to $A$ and from $A$ to $C^*_r(G)$ that extend the identity map on $C_c(G)$ (see Section \ref{subsec:constructinggroupcstaralgebras} for details):
\begin{equation*}
	C^*(G) \twoheadrightarrow A \twoheadrightarrow C^*_r(G).
\end{equation*}
If, moreover, both these canonical surjections are not injective, the algebra $A$ is said to be an exotic group $C^*$-algebra.

Apart from being interesting objects from an intrinsic point of view, group $C^{*}$-algebras of this type and related constructions, such as (exotic) crossed products, have received a lot of attention in recent years, in particular because of their relevance for the study of the Baum-Connes conjecture with coefficients and the strong Novikov conjecture (see e.g.~\cite{MR3514939}, \cite{MR3824785}, \cite{MR3837592}, \cite{antoninietal}).

A natural class of group $C^*$-algebras of a locally compact group $G$ comes from the $L^p$-integrability properties (for different $p$) of matrix coefficients of unitary representations. Let $p \in [1, \infty]$. A unitary representation $\pi \colon G \to \mathcal{U}(\mathcal{H})$ is called an $L^{p+}$-representation if for every $\varepsilon > 0$, sufficiently many of its matrix coefficients are elements of $L^{p+\varepsilon}(G)$. Given $p \in [2,\infty]$, completing the algebra $C_c(G)$ with respect to the natural norm coming from the collection of $L^{p+}$-representations of $G$ yields a group $C^*$-algebra of $G$, denoted by $C^*_{L^{p+}}(G)$ (see Section \ref{subsec:cstarlp} for the precise construction).

In this article, we study the group $C^*$-algebras $C^*_{L^{p+}}(G)$ for certain classes of (non-discrete) totally disconnected locally compact groups $G$ acting on trees. Our starting point is the following result, which shows that given an appropriate locally compact group $G$ acting  on a semi-homogeneous tree  of sufficiently large degree, the group $C^*$-algebras $C^*_{L^{p+}}(G)$ are pairwise distinguishable for $p \in [2,\infty]$.
\begin{thmA} \label{thm:groupcstaralgebrastrees}
Let $T$ be a semi-homogeneous tree of degree $(d_0,d_1)$ with $d_0,d_1\geq 2$ and $d_0+d_1\geq 5$, and let $G$ be a non-compact, closed subgroup of the automorphism group $\mathrm{Aut}(T)$. Suppose that $G$ acts transitively on the boundary $\partial T$. For $2 \leq q < p \leq \infty$, the canonical quotient map
	\[
		C^*_{L^{p+}}(G) \twoheadrightarrow C^*_{L^{q+}}(G)
	\]
is not injective.
\end{thmA}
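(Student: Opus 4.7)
It suffices to exhibit, for each $p \in [2,\infty)$, a unitary representation $\pi_p$ of $G$ that is an $L^{p+}$-representation but fails to be an $L^{q+}$-representation for any $q < p$. By the defining universal property of $C^*_{L^{p+}}(G)$, such a $\pi_p$ extends to $C^*_{L^{p+}}(G)$ while it does not extend to $C^*_{L^{q+}}(G)$, forcing the canonical surjection to have non-trivial kernel. The endpoint case $p = \infty$ is handled by the trivial representation: $G$ is non-amenable (ping-pong on $\partial T$ produces free subgroups), and the constant matrix coefficient $1$ belongs to no $L^{q+\varepsilon}(G)$ for finite $q$.

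\textbf{A family of boundary representations.} Since $G$ acts transitively on $\partial T$, fix a boundary point $\omega$ with stabilizer $P = G_\omega$ and a reference vertex $v$ with compact open stabilizer $K = G_v$. Inducing unitarily from a one-parameter family of positive characters of $P$, realized on $L^2(\partial T) \cong L^2(G/P)$, and renormalizing the inner product, one obtains a continuous family $(\pi_s)_s$ of unitary representations of $G$: the principal series for purely imaginary $s$ and the complementary series for real $s$ in a suitable interval. Each $\pi_s$ carries a distinguished $K$-fixed unit vector $\xi_0$, whose diagonal matrix coefficient $\varphi_s(g) = \langle \pi_s(g) \xi_0, \xi_0 \rangle$ is a $K$-biinvariant spherical function on $G$.

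\textbf{$L^p$-integrability and main obstacle.} Using the Cartan-type decomposition of $G$ into $K$-double cosets indexed by $n = d(v, gv)$, one expresses $\varphi_s$ as an explicit function of $n$ that decays exponentially at a rate $r(s)$ varying continuously in $s$. Simultaneously, the Haar measure of the distance sphere $\{g \in G : d(v, gv) = n\}$ grows like $C \cdot m^n$ for some $m > 1$ determined by $d_0, d_1$ and the indices of the vertex stabilizers. Comparing the two, $\int_G |\varphi_s(g)|^p\, dg$ is comparable to $\sum_n r(s)^{pn} m^n$, which is finite precisely when $p$ exceeds an explicit threshold $p_0(s)$. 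As $s$ traverses the complementary-series interval, $p_0(s)$ sweeps continuously through $[2,\infty)$, yielding the required $\pi_p$ for each $p$. The main technical obstacle is verifying these decay and growth rates directly from the weak standing hypotheses (non-compactness plus boundary transitivity), without invoking further structure such as Tits' independence property; this is essentially a geometric accounting on the tree, made possible by the presence in $G$ of a hyperbolic isometry, which itself is guaranteed by combining non-compactness with boundary transitivity.
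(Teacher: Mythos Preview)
Your overall route---build complementary-series-type representations with a $K$-fixed vector, pin down the $L^p$-class of the spherical coefficient via the Cartan decomposition and sphere-volume growth---is the same as the paper's. But there is a genuine gap in the logical chain.

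You write: ``By the defining universal property of $C^*_{L^{p+}}(G)$, such a $\pi_p$ extends to $C^*_{L^{p+}}(G)$ while it does not extend to $C^*_{L^{q+}}(G)$.'' The first clause is correct; the second is not a consequence of the universal property. By construction, a representation $\rho$ extends to $C^*_{L^{q+}}(G)$ if and only if $\rho$ is \emph{weakly contained} in some $L^{q+}$-representation, which is strictly weaker than $\rho$ itself being an $L^{q+}$-representation. Knowing that the spherical coefficient $\varphi_s$ lies in $L^{p+}(G)\setminus L^{q+}(G)$ does not, by itself, rule out that $\pi_s$ is weakly contained in (say) the regular representation. The paper closes exactly this gap by invoking the Kunze--Stein property of $G$ (known for these groups by Nebbia): for a Kunze--Stein group one has $C^*_{L^{q+}}(G)^*\subset L^{q+\varepsilon}(G)$ for every $\varepsilon>0$ (Theorem~\ref{KS:cor_dual_Lpplus_algebra}), so any representation that factors through $C^*_{L^{q+}}(G)$ has \emph{all} matrix coefficients in $L^{q+\varepsilon}(G)$; in particular $\varphi_s\in L^{q+\varepsilon}(G)$, contradicting your integrability computation. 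Without this ingredient the argument is incomplete, and your ``main obstacle'' paragraph misidentifies where the difficulty lies.

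A smaller point: you assert unitarity of the complementary series by ``renormalizing the inner product'', but the positivity of that renormalized form is precisely the nontrivial issue. The paper obtains the needed positive definite functions under the bare hypotheses (no Tits' independence) by a different route: the length function $|s|=d_c(so,o)$ is conditionally negative definite on $G$, so by Schoenberg $h_z(s)=\delta^{-z|s|/2}$ is positive definite for $z\ge 0$, and a direct computation gives $h_z\in L^p(G)$ iff $z>1/p$ (Proposition~\ref{GA:prop:length_tree_asymp}). This furnishes the positive definite functions with the required $L^p$-behaviour without having to analyse the induced representations directly.
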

We give a proof of this result in Section \ref{sec:sphericalsubcstaralgebras}. It reproves a result of Samei and Wiersma (see \cite[Proposition 4.11 and Example 5.9]{sameiwiersma2}). Our approach is similar to theirs, in the sense that it relies on establishing the integrable Haagerup property for the groups under consideration, which together with the Kunze-Stein property (which is known for these groups) implies the theorem. However, our approach towards the integrable Haagerup property strongly relies on harmonic analysis and representation theory rather on geometric considerations.

Group $C^*$-algebras constructed from $L^p$-integrability properties of matrix coefficients have already been investigated extensively for discrete groups and for Lie groups. The systematic study of such algebras (in the setting of discrete groups) was initiated in \cite{MR3138486}. This lead to an analogue of Theorem \ref{thm:groupcstaralgebrastrees} for (non-amenable) discrete groups containing a non-abelian free subgroup \cite{MR3238088}, \cite{MR3705441}. In the setting of Lie groups, Wiersma proved an analogue of Theorem \ref{thm:groupcstaralgebrastrees} for $\mathrm{SL}(2,\mathbb{R})$ \cite{MR3418075}, which was generalised to the Lie groups $\mathrm{SO}_0(n,1)$ and $\mathrm{SU}(n,1)$ in \cite{sameiwiersma2}. In \cite{delaatsiebenand1}, the second-named and the third-named author generalised this to all classical simple Lie groups with real rank one, including the ones with property (T), which could not be dealt with before.

A related question is whether the algebras $C^*_{L^{p+}}(G)$ are the only group $C^*$-algebras of the groups considered in Theorem \ref{thm:groupcstaralgebrastrees}. A positive answer is too much to hope for, but under two additional assumptions on the groups, we can show that the algebras $C^*_{L^{p+}}(G)$ are the only ones that can be constructed from $G$-invariant ideals in the Fourier-Stieltjes algebra $B(G)$ of $G$ (see Section \ref{subsec:constructinggroupcstaralgebras} for details).
\begin{thmA} \label{thm:ideals}
Let $T$ be a homogeneous tree of degree $d \geq 3$, and let $G$ be a non-compact, closed subgroup of the automorphism group $\mathrm{Aut}(T)$. Suppose that $G$ acts transitively on the vertices of $T$ and on the boundary $\partial T$ and that $G$ satisfies Tits' independence property. If $C^*_{\mu}(G)$ is a group $C^*$-algebra of $G$ such that its dual space $C^*_{\mu}(G)^*$ is a $G$-invariant ideal of $B(G)$, then there exists a unique $p \in [2,\infty]$ such that $B_{L^{p+}}(G)=C^*_{\mu}(G)^*$, where $B_{L^{p+}}(G):=C^*_{L^{p+}}(G)^*$.
\end{thmA}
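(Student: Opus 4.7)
The approach is to combine the explicit description of the unitary dual $\widehat{G}$ available under the standing hypotheses with an analysis of how $G$-invariant ideals in $B(G)$ are constrained by the tensor product structure of the Fourier--Stieltjes algebra.

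First, under the assumptions that $G$ is a closed non-compact subgroup of $\mathrm{Aut}(T)$ acting transitively on vertices and on $\partial T$ and satisfying Tits' independence property, results of Ol'shanskii imply that $G$ is of type I and that every irreducible unitary representation of $G$ belongs to one of four families: the trivial representation $\one$; the spherical unitary principal series (tempered); the spherical complementary series $\{\pi_s\}_{s\in I}$, parametrised by a real interval $I$ and non-tempered; and the special and cuspidal representations (tempered). A direct computation with the spherical functions on $T$ gives, for each $s\in I$, a uniquely determined $p(s)\in(2,\infty)$ such that $\pi_s$ is an $L^{p(s)+}$-representation, and the map $s\mapsto p(s)$ extends to a strictly increasing bijection onto $[2,\infty]$ once one adjoins the tempered endpoint of $I$ (mapping to $2$) and $\one$ (mapping to $\infty$).

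Now let $B_\mu:=C^*_\mu(G)^*$ be as in the statement. The canonical surjections $C^*(G)\twoheadrightarrow C^*_\mu(G)\twoheadrightarrow C^*_r(G)$ give $B_r(G)\subseteq B_\mu\subseteq B(G)$. Since $G$ is of type I and $B_\mu$ is weak-* closed and $G$-invariant, it is uniquely determined by the subset $E\subseteq\widehat{G}$ of irreducible representations whose matrix coefficients lie in $B_\mu$. As the entire tempered dual is automatically contained in $E$ (via $B_r(G)\subseteq B_\mu$), the set $E$ is completely described by its intersection with the complementary series parameters in $I$ together with whether $\one\in E$.

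The main obstacle is to show that the subset of $I$ captured by $E$ is an initial segment $\{s\in I: s\leq s_0\}$ for a unique $s_0$, so that $B_\mu=B_{L^{p_0+}}(G)$ for $p_0:=p(s_0)$. The ideal condition $B(G)\cdot B_\mu\subseteq B_\mu$ is essential here: if $\varphi$ is a matrix coefficient of $\pi_s\in E$ and $\psi$ is a matrix coefficient of some $\sigma\in\widehat{G}$, then $\varphi\psi$ is a matrix coefficient of $\pi_s\otimes\sigma$. The direct integral decomposition of such tensor products, computable via the product formula for spherical functions on $T$ in the spirit of Fig\`a-Talamanca and Nebbia, precisely determines which additional $\pi_u$ must lie in $E$. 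Combining this with Fell's absorption principle $\pi\otimes\lambda\simeq\lambda^{\oplus\dim\pi}$ (which yields $B_r(G)\subseteq B_\mu$ automatically) and iterating over all admissible $\sigma$, one concludes that $E$ is characterised by a single cutoff parameter. Uniqueness of $p$ then follows immediately from Theorem~\ref{thm:groupcstaralgebrastrees}, which implies that the chain $\{B_{L^{p+}}(G)\}_{p\in[2,\infty]}$ is strictly decreasing in $p$.
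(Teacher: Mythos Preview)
Your outline correctly identifies the structure of the problem: the non-tempered part of $\widehat{G}$ is exhausted by the complementary series, and everything reduces to showing that the set of complementary-series parameters appearing in $E$ is an interval. However, the paper's proof of this interval property is entirely different from what you propose, and your argument for this crucial step has a genuine gap.

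The paper does not analyse tensor products $\pi_s\otimes\sigma$ at all. Instead it argues via K-theory. K-amenability of $G$ (which holds because $G$ acts on a tree), combined with the hypothesis that $C^*_\mu(G)^*$ is an ideal in $B(G)$, implies by Theorem~\ref{CR:thm:KK_eq_abs_grp_alg} that the quotient $C^*_\mu(G)\to C^*_r(G)$ is a KK-equivalence; this descends to the commutative spherical subalgebras $C^*_\mu(K\backslash G/K)\to C^*_r(K\backslash G/K)$. Since $C^*_\mu(K\backslash G/K)\cong C(\sigma_\mu(\mu_1))$ is singly generated, the six-term exact sequence forces $K_*\bigl(C_0(\sigma_\mu(\mu_1)\setminus\sigma_r(\mu_1))\bigr)=0$, and hence $\sigma_\mu(\mu_1)$ can have no gaps: it must be a full interval $[-r,r]$. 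The transitivity of $G$ on $V(T)$ enters precisely here, via the sign character of Remark~\ref{GA:rem:tree_char_vs_spec_of_mu}, to guarantee that $-r\in\sigma_\mu(\mu_1)$ once $r$ is. Matching $[-r,r]$ with the spectra in Proposition~\ref{GA:prp:spectrum_of_mu_in_radial_algs} then produces the unique~$p$.

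Your proposed route, by contrast, appeals to ``the direct integral decomposition of such tensor products'' to force the initial-segment property, but you do not carry this out: you neither state which $\pi_u$ actually occur in $\pi_s\otimes\sigma$, nor explain why this forces every $\pi_u$ with $u\le s_0$ into $E$ once $\pi_{s_0}\in E$. The sentence ``one concludes that $E$ is characterised by a single cutoff parameter'' is precisely the assertion requiring proof, and ``iterating over all admissible $\sigma$'' does not constitute one. An argument in this spirit can in fact be made to work---for instance, by multiplying the spherical function of $\pi_{s_0}$ by the positive-definite functions $h_t=\delta^{-t|\cdot|/2}$ of Proposition~\ref{GA:prop:length_tree_asymp} and observing that the $L^{2+}$ remainder already lies in $B_r(G)\subset B_\mu$---but that is not what you wrote, and it is the heart of the matter. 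As it stands, the key step is asserted rather than proved.
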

We also prove this theorem for the totally disconnected group $\mathrm{SL}(2,\mathbb{Q}_p)$, which does not have Tits' independence property.

Group $C^*$-algebras constructed from $G$-invariant ideals in $B(G)$ play an important role in the theory of exotic crossed product functors due to their good behaviour with respect to important invariants, such as K-theory (see \cite{MR3824785}, \cite{MR3837592}). This behaviour plays an important role in the proof of Theorem \ref{thm:ideals}, which is given in Section \ref{sec:sphericalsubcstaralgebras}. The strategy of this proof was first used by the third-named author in \cite{siebenand}, where he showed an analogue of Theorem \ref{thm:ideals} for $\mathrm{SL}(2,\mathbb{R})$ and $\mathrm{SL}(2,\mathbb{C})$. The analogue for $\mathrm{SL}(2,\mathbb{R})$ had already been covered before by representation-theoretic methods in \cite{MR3418075}.

Furthermore, similar methods lead to the following result, the proof of which is also given in Section \ref{sec:sphericalsubcstaralgebras}. This result indicates that canonical (non)-${}^*$-isomorphism of group $C^*$-algebras is very subtle.
\begin{thmA}\label{thm:star_iso}
Let $T$ be a semi-homogeneous tree of degree $(d_0,d_1)$ with $d_0,d_1\geq 2$ and $d_0+d_1\geq 5$, and let $G$ be a non-compact, closed subgroup of the automorphism group $\mathrm{Aut}(T)$. Suppose that $G$ acts transitively on the boundary $\partial T$ and that $G$ satisfies Tits' independence property. Then every group $C^*$-algebra $C^*_\mu(G)$ of $G$ that is distinguishable (as a group $C^*$-algebra) from the universal group $C^*$-algebra of $G$ and whose dual space $C^*_\mu(G)^*$ is a $G$-invariant ideal of $B(G)$ is (abstractly) ${}^*$-isomorphic to the reduced group $C^*$-algebra of $G$.
\end{thmA}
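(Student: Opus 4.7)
The plan is to combine the representation-theoretic structure of $G$ with a continuous-field description of $C^*_\mu(G)$. First, I would observe that the condition of being distinguishable from $C^*(G)$ is equivalent to $B_\mu := C^*_\mu(G)^*$ being a \emph{proper} $G$-invariant ideal of $B(G)$: if $B_\mu$ contained the constant function $1_G$, then the ideal property would force $B_\mu = B(G)$, and the quotient map $C^*(G) \twoheadrightarrow C^*_\mu(G)$ would be an isomorphism. Consequently, the trivial representation does not weakly belong to $\mu$, so the spectrum $\widehat{G}_\mu$ is contained in $\widehat{G} \setminus \{1_G\}$. On the other hand, $\widehat{G}_\mu \supseteq \widehat{G}_r$, since $C^*_r(G)$ is a canonical quotient of $C^*_\mu(G)$.

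Second, I would invoke the classification of the unitary dual of tree groups satisfying Tits' independence and $\partial T$-transitivity, as worked out by Ol'shanskii, Nebbia, Amann, Ciobotaru and others. Under these hypotheses $G$ is type I, and the non-tempered part of $\widehat{G}$ consists of the trivial representation together with a family of spherical complementary series representations parametrized by a half-open arc $I$; this arc attaches at its closed end to an endpoint of the spherical principal series and limits to $1_G$ at its open end. In particular, every non-spherical irreducible unitary representation of $G$ is tempered. Combined with the previous paragraph, this forces $\widehat{G}_\mu = \widehat{G}_r \cup S$ for some $S \subseteq I$ that is bounded away from the endpoint of $I$ associated with $1_G$.

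Third, since $G$ is type I, both $C^*_r(G)$ and $C^*_\mu(G)$ are determined up to abstract $*$-isomorphism by the topological structure of their spectra together with the continuous-field structure of their irreducible fibres. The key geometric observation is that adjoining the open arc $S$ onto the principal series at a single endpoint does not change the homeomorphism type of the spherical spectrum: the union $\widehat{G}_r \cup S$ is homeomorphic to $\widehat{G}_r$ via a re-parametrization that absorbs $S$ into the adjacent principal series arc. Since the non-spherical parts of $C^*_\mu(G)$ and $C^*_r(G)$ are literally identical, piecing together the identifications on the spherical and non-spherical summands produces the desired abstract $*$-isomorphism.

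The main obstacle will be upgrading this spectral homeomorphism to a genuine $*$-isomorphism of $C^*$-algebras, rather than merely of their primitive ideal spaces: one must match the continuous-field structure on the spherical summand — with each fibre an algebra of compact operators — so as to rule out any twisting obstruction. I expect this to follow from the explicit realization of both the principal and complementary spherical representations via intertwining operators on $L^2(\partial T)$, together with the continuous deformation of the complementary series into the principal series at their common endpoint.
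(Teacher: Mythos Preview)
Your overall architecture matches the paper's: split off the non-spherical (special and super-cuspidal) part, on which the quotient map is already a $*$-isomorphism, and then argue that the spherical ideals $C^*_\mu(G,K)$ and $C^*_r(G,K)$ are abstractly $*$-isomorphic. The differences lie in how you handle the spherical ideal, and one of them is a genuine gap.

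\textbf{The gap.} You pass from ``$\widehat{G}_\mu = \widehat{G}_r \cup S$ with $S \subseteq I$ bounded away from $1_G$'' to ``$S$ is an open arc that can be absorbed into the principal series by re-parametrization''. Nothing you have said forces $S$ to be an interval: Fell-closedness of $\widehat{G}_\mu$ only gives that $S$ is closed in $I$, and the ideal condition on $B_\mu$ does not by itself rule out a disconnected $S$. If $S$ had a gap, the spherical spectrum of $C^*_\mu(G)$ would not be homeomorphic to that of $C^*_r(G)$, and your argument collapses. The paper fills this hole with K-theory: $G$ is K-amenable, so the canonical quotients are KK-equivalences (Theorem~\ref{CR:thm:KK_eq_abs_grp_alg}); restricting to the singly-generated commutative algebra $C^*_\mu(K\backslash G/K)$ and running the six-term sequence for $C^*_\mu(K\backslash G/K) \twoheadrightarrow C^*_r(K\backslash G/K)$ forces the kernel to have trivial K-theory, which prohibits any compact-open component in the complement and pins the spectrum down as a perfect compact interval. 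You need this (or an equivalent) step.

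\textbf{The methodological difference.} For the ``no twisting'' obstruction you propose to track intertwining operators on $L^2(\partial T)$ explicitly across the principal/complementary boundary. The paper avoids this entirely: since $C^*_\mu(G,K)$ is Morita-equivalent to the commutative algebra $C^*_\mu(K\backslash G/K)$, it is a separable continuous-trace algebra; the exclusion of the (one or two) finite-dimensional spherical representations --- which follows from properness of $B_\mu$ together with the relation $\varphi_{-1}\cdot\varphi_{-1}=1_G$ in the $\kappa=1$ case --- makes every fibre infinite-dimensional, so the algebra is stable; and since the spectrum is a compact interval, the Dixmier--Douady class vanishes. Stability plus Dixmier--Douady then gives the $*$-isomorphism directly. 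Your intertwining-operator route could in principle be made to work, but it is considerably more labour for the same conclusion.
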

From the work of Bruhat and Tits \cite{MR0327923}, it is known that semi-simple algebraic rank one groups resemble the structure of the groups considered above. Indeed, for every reductive group over a local field, Bruhat and Tits constructed a geometric object -- nowadays called Bruhat-Tits building -- on which the group admits a natural action. These buildings can be viewed as a generalisation of Riemann symmetric spaces, and in the case of groups of rank one, the Bruhat-Tits building is a (semi-)homogeneous tree. In this way, our results may be applied to (appropriate classes of) rank one algebraic groups. In particular, it is known that the action of a simple algebraic group of rank one over a non-Archimedean local field on the boundary of its Bruhat-Tits tree is transitive, so Theorem \ref{thm:groupcstaralgebrastrees} directly applies.

\section*{Acknowledgements}
\noindent We thank Siegfried Echterhoff for interesting discussions and several useful comments, Marc Burger for pointing out \cite{sallyIntroductionAdicFields1998} to us, and Sven Raum for useful remarks on an earlier version of this article.

TdL and TS are supported by the Deutsche Forschungsgemeinschaft - Project-ID 427320536 - SFB 1442, as well as under Germany's Excellence Strategy - EXC 2044 -  390685587, Mathematics Münster: Dynamics - Geometry - Structure.

\section{Group $C^*$-algebras} \label{subsec:unitarydual}
We now recall some basic theory of group $C^*$-algebras. In this section, let $G$ be a locally compact group, equipped with a fixed Haar measure $\mu_G$.

\subsection{Weak containment and the unitary dual} \label{subsec:unitarydual}
A matrix coefficient of a unitary representation $\pi \colon G \to \mathcal{U}(\mathcal{H})$ is an (automatically bounded and continuous) function of the form $\pi_{\xi,\eta} \colon s \mapsto \langle \pi(s)\xi,\eta \rangle$, where $\xi,\eta \in \mathcal{H}$. A matrix coefficient $\pi_{\xi,\eta}$ is called diagonal if $\xi=\eta$, i.e.~if it is of the form $\pi_{\xi,\xi}$ for some $\xi \in \mathcal{H}$.

Let $\pi_1$ and $\pi_2$ be unitary representations of $G$. If every diagonal matrix coefficient of $\pi_1$ can be approximated uniformly on compact subsets of $G$ by finite sums of diagonal matrix coefficients of $\pi_2$, then the representation $\pi_1$ is said to be weakly contained in $\pi_2$.

For a locally compact group $G$, let $\widehat{G}$ denote its unitary dual, i.e.~the set of (unitary) equivalence classes of irreducible unitary representations equipped with the Fell topology. For a subset $S$ of $\widehat{G}$, the closure $\overline{S}$ of $S$ in the Fell topology consists of the elements of $\widehat{G}$ which are weakly contained in $S$. The subspace of $\widehat{G}$ consisting of all elements from $\widehat{G}$ that are weakly contained in the left regular representation $\lambda \colon G \to \mathcal{U}(L^2(G))$ is called the reduced unitary dual and denoted by $\widehat{G}_r$. For details, we refer to \cite{MR0458185}.

\subsection{Constructing group $C^{*}$-algebras} \label{subsec:constructinggroupcstaralgebras}
A group $C^*$-algebra associated with $G$ is a $C^{*}$-completion $C^*_\mu(G)$ of $C_c(G)$ with respect to a $C^*$-norm $\|.\|_{\mu}$ satisfying $\|f\|_u \geq \|f\|_{\mu} \geq \|f\|_r$ for all $f \in C_c(G)$, where $\|.\|_u$ and $\|.\|_r$ are the universal and the reduced $C^*$-norm, respectively. The identity map from $C_c(G)$ to $C_c(G)$ induces canonical surjective ${}^*$-homomorphisms $C^{*}(G) \twoheadrightarrow C^*_\mu(G)$ and $C^*_\mu(G) \twoheadrightarrow C^{*}_r(G)$. If both the quotient map $C^{*}(G) \twoheadrightarrow C^*_\mu(G)$ and the quotient map $C^*_\mu(G) \twoheadrightarrow C^{*}_r(G)$ are non-injective, then the algebra $C^*_\mu(G)$ is called an exotic group $C^*$-algebra. More generally, two group $C^*$-algebras
$C^*_{\mu_1}(G)$ and $C^*_{\mu_2}(G)$ are said to be distinguishable
if the corresponding $C^*$-norms $\|\cdot\|_{\mu_1}$ and $\|\cdot\|_{\mu_2}$
on $C_c(G)$ differ.

One way to construct group $C^*$-algebras, which goes back to \cite{MR3514939}, is by defining a $C^*$-norm that naturally comes from an appropriate subset of the unitary dual. More precisely, if $\widehat{G}$ and $\widehat{G}_r$ are the unitary and the reduced unitary dual (see Section \ref{subsec:unitarydual}), respectively, a subset $S \subset \widehat{G}$ is said to be admissible if $\widehat{G}_r \subset \overline{S}$. For such an admissible $S \subset \widehat{G}$, we can define a $C^*$-norm on $C_c(G)$ by
\[
	\|f\|_S:=\sup\{\|\pi(f)\| \mid \pi \in S\}.
\]
The corresponding completion $C^*_S(G)$ is a group $C^{*}$-algebra.
\begin{dfn} \label{dfn:ideal}
Let $G$ be a locally compact group. An ideal of $\widehat{G}$ is a subset $S \subset \widehat{G}$ such that for every $\pi \in S$ and every unitary representation $\rho$ of $G$, the unitary representation $\pi \otimes \rho$ is weakly contained in $S$.
\end{dfn}
Note that non-empty ideals are automatically admissible. Taking $S$ to be an ideal in the above construction has certain analytic advantages, as will be explained below.

In \cite{MR3141810}, another construction of group $C^*$-algebras was described. Recall that the Fourier-Stieltjes algebra $B(G)$ of a locally compact group $G$ is the Banach algebra consisting of matrix coefficients of unitary representations of $G$. The Fourier-Stieltjes algebra $B(G)$ can be identified canonically with the dual space $C^*(G)^*$ of $C^*(G)$ through the pairing given by $\langle \varphi, \, f \rangle = \int \varphi f \mathrm{d}\mu_G$, with $\varphi \in B(G)$ and $f\in C_c(G) \subset C^*(G)$. Let $B_r(G) \subset B(G)$ be the dual space of the reduced group $C^*$-algebra $C^*_r(G)$. If $E \subset B(G)$ is a weak*-closed $G$-invariant subspace of $B(G)$ that contains $B_r(G)$, then
\begin{align*}
	C_E^*(G) = C^*(G)/{}^{\perp}E
\end{align*}
is a group $C^*$-algebra. Here ${}^\perp E = \{ x\in C^*(G)\mid \langle\varphi, x\rangle = 0 \; \forall \varphi \in E\}$ denotes the pre-annihilator of $E$.

The two constructions recalled above are closely related: If $C^*_\mu(G)$ is a group $C^*$-algebra of $G$, then $\widehat{C^*_\mu(G)}\subset \widehat{G}$ is a closed ideal in $\widehat{G}$ if and only if the dual space $C^*_\mu(G)^*$ of $C^*_\mu(G)$ is a $G$-invariant ideal in $B(G)$. An explicit proof of this fact (which is well known to experts) can be found in \cite[Proposition 2.2]{delaatsiebenand1}.

\subsection{$L^p$-integrability of matrix coefficients and group $C^*$-algebras} \label{subsec:cstarlp}
We now consider unitary representations with certain $L^p$-integrability conditions on their matrix coefficients.
\begin{dfn} \label{dfn:lpplusrepresentation}
	Let $\pi \colon G \to \mathcal{U}(\mathcal{H})$ be a unitary representation, and let $p \in [1, \infty]$.
\begin{enumerate}[(i)]
	\item The representation $\pi$ is an $L^{p}$-representation if there exists a dense subspace 
		$\mathcal{H}_0 \subset \mathcal{H}$ such that for all $\xi,\eta \in \mathcal{H}_0$, we have $\pi_{\xi,\eta}\in L^p(G)$.
	\item The representation $\pi$ is an $L^{p+}$-representation if for all $p'\in (p,\infty]$, it is an $L^{p'}$-representation.
\end{enumerate}
\end{dfn}
These (and similar) notions have been studied extensively in the area of harmonic analysis on Lie groups.

We now consider the group $C^*$-algebras associated with these classes of representations. Note that in general, we cannot just take $S$ to be the set of (equivalence classes of) $L^{p+}$-representations and use the first construction above. Indeed, the set $S$ can be empty, e.g.~for non-compact locally compact abelian groups. Therefore, we define the $C^*$-norm in terms of unitary representations that are not necessarily irreducible.

Let $G$ be a locally compact group and $p\in[2,\infty]$. Let $C^*_{L^p}(G)$ and $C^*_{L^{p+}}(G)$ denote the group $C^*$-algebras obtained as the completions $C_c(G)$ with respect to norms
\begin{align*}
	\|\cdot \|_{L^p} &\colon C_c(G)\to [0,\infty),\, f \mapsto 
	\sup \lbrace \|\pi(f)\| \mid \pi \mbox{ is a } L^p\mbox{-representation}\} \mbox { and}\\
	\|\cdot \|_{L^{p+}}& \colon C_c(G)\to [0,\infty),\, f \mapsto 
	\sup \{ \|\pi(f)\| \mid \pi \mbox{ is a } L^{p+}\mbox{-representation}\},
\end{align*}
respectively. This essentially goes back to \cite{MR3138486}, where the algebras $C^*_{L^p}(G)$ were constructed for discrete groups $G$.

It is known that whenever $p\in [2,\infty]$, the dual spaces $\widehat{C^*_{L^{p}}(G)}$ and $\widehat{C^*_{L^{p+}}(G)}$ of $C^*_{L^{p}}(G)$ and $C^*_{L^{p+}}(G)$, respectively, are ideals in $\widehat{G}$ (in the sense of Definition \ref{dfn:ideal}).

Matrix coefficients (being bounded and continuous functions) that are in $L^p(G)$ for some $p \in [1,\infty]$ are automatically in $L^r(G)$ for all $r \geq p$. It follows that whenever $q \leq p$, we have $\norm{\cdot}_{L^{q+}}\geq \norm{\cdot}_{L^{p+}}$. Hence, the identity map on $C_c(G)$ extends to a canonical surjective ${}^*$-homomorphism $C^*_{L^{p+}}(G) \twoheadrightarrow C^*_{L^{q+}}(G)$. In order to find distinguishable group $C^*$-algebras, the relevant question is, under which conditions this map is not injective.

\subsection{Kunze-Stein groups and $L^{p+}$-group-$C^*$-algebras}
The distinguishability of the group $C^*$-algebras $C^*_{L^{p+}}(G)$ is especially well understood for Kunze-Stein groups. The investigation of the $L^{p+}$-group-$C^*$-algebras of such groups goes back to Samei and Wiersma \cite{sameiwiersma2}.

Recall that a locally compact group $G$ is called a Kunze-Stein group if
the convolution product on $C_c(G)$ extends to a bounded bilinear 
map
\begin{equation*}
	L^p(G)\times L^2(G)\to L^2(G)
\end{equation*}
for every $p\in [1,2)$.

Let us recall an important result on the algebras $C^*_{L^{p+}}(G)$ of Kunze-Stein groups $G$ (see {\cite[Theorem 5.3]{sameiwiersma2}}).
\begin{thm} \label{KS:cor_dual_Lpplus_algebra}
	Let $G$ be a Kunze-Stein group and $p\in [2,\infty]$. Then
		\begin{align*}
			C^*_{L^{p+}}(G)^* \subset L^{p+\varepsilon}(G) \textrm{ for all } \varepsilon > 0.
		\end{align*}
\end{thm}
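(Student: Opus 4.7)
The plan is to proceed in two steps: realise $C^*_{L^{p+}}(G)^*$ as the space of matrix coefficients of a single universal $L^{p+}$-representation of $G$, and then use the Kunze-Stein property to upgrade the $L^{p+\varepsilon}$-integrability from a dense subspace to the entire Hilbert space of that representation.

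For the first step, let $\pi_u := \bigoplus_\pi \pi$, where the direct sum ranges over a set of representatives of the unitary equivalence classes of $L^{p+}$-representations of $G$. By the definition of $\|\cdot\|_{L^{p+}}$, one has $\|f\|_{L^{p+}} = \|\pi_u(f)\|$ for every $f \in C_c(G)$, so $C^*_{L^{p+}}(G)^*$ may be identified with the space of matrix coefficients of $\pi_u$. Moreover, $\pi_u$ is itself an $L^{p+}$-representation: for each $p' \in (p,\infty]$, the algebraic direct sum of the dense subspaces witnessing the $L^{p'}$-property of the summands is a dense subspace of $\mathcal{H}_{\pi_u}$, and any two of its vectors have matrix coefficient equal to a \emph{finite} sum of matrix coefficients coming from the summands, hence in $L^{p'}(G)$.

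For the second step, I would invoke the Cowling-Haagerup-Howe theorem, a key consequence of the Kunze-Stein property: for a Kunze-Stein group $G$ and a positive integer $k$, every $L^{2k}$-representation is weakly contained in $\lambda^{\otimes k}$, and consequently \emph{every} matrix coefficient of such a representation (not merely those coming from a defining dense subspace) lies in $L^{2k+\delta}(G)$ for all $\delta > 0$. Given $p \in [2, \infty)$ and $\varepsilon > 0$, choose an even integer $2k$ with $p < 2k < p + \varepsilon/2$. Since $\pi_u$ is an $L^{p+}$-representation, it is in particular an $L^{2k}$-representation, so every matrix coefficient of $\pi_u$ lies in $L^{2k+\varepsilon/2}(G) \subset L^{p+\varepsilon}(G)$. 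Combined with Step 1, this yields $C^*_{L^{p+}}(G)^* \subset L^{p+\varepsilon}(G)$. The case $p = \infty$ is trivial, since $B(G) \subset L^\infty(G)$ always.

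The main obstacle is the passage from the defining dense subspace to arbitrary vectors of $\mathcal{H}_{\pi_u}$ in the second step. This is precisely where the Kunze-Stein hypothesis enters non-trivially, through Cowling's interpolation-type analysis relating $L^p$-integrability of matrix coefficients to tensor powers of the regular representation; without it, one only obtains $L^{p+\varepsilon}$-bounds on a dense subspace of matrix coefficients, which is not enough to control the whole dual space.
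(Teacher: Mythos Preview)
The paper does not give its own proof of this statement; it is quoted from Samei--Wiersma \cite[Theorem~5.3]{sameiwiersma2}. Your Step~2, however, contains a genuine gap. The elementary problem is that for generic $p\in[2,\infty)$ and small $\varepsilon>0$ there is no even integer $2k$ in $(p,\,p+\varepsilon/2)$, so the reduction to even exponents cannot work as written. More seriously, the route through tensor powers of $\lambda$ collapses entirely: by Fell's absorption principle $\lambda\otimes\rho$ is unitarily equivalent to an amplification of $\lambda$ for \emph{every} unitary representation $\rho$, so $\lambda^{\otimes k}$ is weakly equivalent to $\lambda$ for all $k\geq 1$ and all locally compact $G$. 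Thus ``weakly contained in $\lambda^{\otimes k}$'' simply means ``tempered'', and your assertion that every $L^{2k}$-representation is weakly contained in $\lambda^{\otimes k}$ is false for $k\geq 2$ --- the complementary series of $\mathrm{SL}(2,\R)$ are $L^{2k}$-representations for suitable $k\geq 2$ that are not tempered. The Cowling--Haagerup--Howe theorem treats only the case $k=1$; it has no analogue of the form you invoke.

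There is also a circularity lurking in Step~1: the dual $C^*_{L^{p+}}(G)^*$ consists of matrix coefficients of representations \emph{weakly contained in} your $\pi_u$, not merely of $\pi_u$ itself, and knowing that such representations are again $L^{p+}$ is essentially what the theorem asserts. The argument in \cite{sameiwiersma2} avoids both problems by using the Kunze--Stein convolution inequalities directly (rather than any weak-containment detour) to pass from $L^{p'}$-integrability of a single positive definite function $\varphi$ to $L^{p'+\delta}$-integrability of \emph{all} matrix coefficients of its GNS representation; the point is that these matrix coefficients are convolutions $g^**\varphi*\tilde f$, and Kunze--Stein estimates control $\|g^**\varphi*\tilde f\|_{p'+\delta}$ by $\|f\|_2\|g\|_2$, which lets one extend from the dense subspace $\pi_\varphi(C_c(G))\xi_\varphi$ to all of $\mathcal{H}_\varphi$.
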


\subsection{K-amenability and group $C^*$-algebras}
One of the key tools in our proofs is that group $C^*$-algebras whose dual space is an ideal in the Fourier-Stieltjes algebra behave particular well under K-theory if the group $G$ is K-amenable. Recall that a locally compact group $G$ is said to be K-amenable if the unit element $1_G$ of the Kasparov ring KK$^G(\mathbb{C},\mathbb{C})$ can be represented by a $\mathbb{C}$-$\mathbb{C}$-Kasparov $G$-module $(X,\gamma,\phi,F)$ such that $\gamma$ is weakly contained in the left regular representation of $G$ (see \cite{MR0716254}, \cite{MR0757995}).

The definition of K-amenability is slightly technical. By \cite[Theorem 1.3]{MR0757995}, the groups under consideration in this article are K-amenable. (More generally, this is true for all second countable locally compact groups with the Haagerup property \cite{MR1703305}). Hence, these technicalities do not play an important role in our arguments.

Specifically, we use the following result (see {\cite{MR3837592}, \cite{MR3824785}).
\begin{thm} \label{CR:thm:KK_eq_abs_grp_alg}
	Let $G$ be a $K$-amenable, second countable, locally compact group. If $C^*_\mu(G)$ is a group $C^*$-algebra of $G$ such that
	$C^*_\mu(G)^*$ is an ideal in $B(G)$, then the canonical quotient maps
	$q \colon C^*(G)\to C^*_\mu(G)$ and $s \colon C^*_\mu(G)\to C^*_r(G)$ are KK-equivalences, i.e.~$[q] \in \mathrm{KK}(C^*(G),C^*_{\mu}(G))$ and $[s] \in \mathrm{KK}(C^*_{\mu}(G),C^*_r(G))$ are invertible.
\end{thm}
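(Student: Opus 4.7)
The goal is to produce a KK-inverse for each of the canonical maps $q\colon C^*(G)\to C^*_\mu(G)$ and $s\colon C^*_\mu(G)\to C^*_r(G)$, under the assumption that $G$ is K-amenable and $C^*_\mu(G)^*$ is a $G$-invariant ideal in $B(G)$. My plan is to first obtain a KK-inverse for the full composition $s \circ q = q_r \colon C^*(G)\to C^*_r(G)$ directly from K-amenability, and then use the ideal hypothesis to exhibit this inverse as a product passing through $C^*_\mu(G)$.

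First I would recall that K-amenability of $G$ implies that $[q_r]\in\mathrm{KK}(C^*(G),C^*_r(G))$ is invertible. Concretely, the K-amenable Kasparov $G$-module $(X,\gamma,\phi,F)$ representing $1_G\in\mathrm{KK}^G(\C,\C)$, with $\gamma$ weakly contained in $\lambda_G$, descends via Kasparov's descent (the version that uses the reduced crossed product on one side and the full crossed product on the other, which works precisely because $\gamma\prec\lambda_G$) to an element $\eta\in\mathrm{KK}(C^*_r(G),C^*(G))$ satisfying $\eta\cdot[q_r]=1_{C^*_r(G)}$ and $[q_r]\cdot\eta=1_{C^*(G)}$. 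Setting $\xi:=\eta\cdot[q]\in\mathrm{KK}(C^*_r(G),C^*_\mu(G))$ already yields $\xi\cdot[s]=\eta\cdot[q]\cdot[s]=\eta\cdot[q_r]=1_{C^*_r(G)}$, so $[s]$ is automatically split by $\xi$ on one side.

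The crux of the argument, and where the ideal hypothesis is essential, is to show the reverse identity $[s]\cdot\xi=[s]\cdot\eta\cdot[q]=1_{C^*_\mu(G)}$. Here the plan is to redo the descent at the level of the exotic algebra. The assumption that $C^*_\mu(G)^*$ is a $G$-invariant ideal in $B(G)$ is equivalent, by the equivalence recalled in Section \ref{subsec:constructinggroupcstaralgebras} (see \cite[Proposition 2.2]{delaatsiebenand1}), to $\widehat{C^*_\mu(G)}$ being an ideal in $\widehat{G}$ in the sense of Definition \ref{dfn:ideal}. Consequently, for any unitary representation $\pi$ of $G$ that factors through $C^*_\mu(G)$, the tensor product $\pi\otimes\gamma$ is again weakly contained in the representations factoring through $C^*_\mu(G)$ (indeed $\pi\otimes\gamma\prec\pi\otimes\lambda_G$, and the ideal property closes the latter back into $\widehat{C^*_\mu(G)}$). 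This is exactly what is needed to carry out the Kasparov descent with $C^*_\mu(G)$ in place of $C^*(G)$, producing a \emph{$\mu$-descent} $\eta_\mu\in\mathrm{KK}(C^*_r(G),C^*_\mu(G))$ from the same K-amenable module $(X,\gamma,\phi,F)$. By functoriality of descent one has $\eta_\mu\cdot[s]=1_{C^*_r(G)}$ and $[q]\cdot\eta_\mu=\eta$, and then the two relations together give $[s]\cdot\eta_\mu=1_{C^*_\mu(G)}$; combined with $\eta_\mu=\xi$ in $\mathrm{KK}(C^*_r(G),C^*_\mu(G))$, this provides the missing identity, so that $[s]$ is a KK-equivalence. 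Since $[q_r]$ is a KK-equivalence and $[q_r]=[q]\cdot[s]$, two-out-of-three then forces $[q]$ to be a KK-equivalence as well.

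The main obstacle I foresee is the legitimacy of performing the Kasparov descent against the exotic norm $\|\cdot\|_\mu$: one must check that the relevant convergence of matrix coefficients and integrals, which in the classical descent relies on $\gamma\prec\lambda_G$ to control reduced norms, continues to hold with respect to $\|\cdot\|_\mu$, and this is precisely where the tensor-stability of $\widehat{C^*_\mu(G)}$ under arbitrary unitary representations (the ideal property) is used. Once this descent is set up cleanly, as is done in \cite{MR3837592} and \cite{MR3824785} in the framework of exotic crossed products, the rest of the argument is formal KK-calculus.
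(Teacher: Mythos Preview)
The paper does not prove this theorem; it is quoted as a result from \cite{MR3837592}, \cite{MR3824785}, so there is no ``paper's own proof'' to compare against. Your outline is in the spirit of the Buss--Echterhoff--Willett argument and has the right ingredients (K-amenability gives an inverse for $[q_r]$; the ideal property lets you descend through the exotic completion), so on the large scale you are on the right track.

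That said, the heart of your sketch has a genuine gap. The relation ``$[q]\cdot\eta_\mu=\eta$'' does not type-check: $[q]\in\mathrm{KK}(C^*(G),C^*_\mu(G))$ and $\eta_\mu\in\mathrm{KK}(C^*_r(G),C^*_\mu(G))$ cannot be composed in either order. More seriously, even after correcting this (presumably you meant $\eta\cdot[q]=\eta_\mu$, i.e.\ $\eta_\mu=\xi$), the two relations you invoke do \emph{not} yield $[s]\cdot\eta_\mu=1_{C^*_\mu(G)}$: knowing that $[s]$ has a one-sided inverse $\eta_\mu$ on the right in $\mathrm{KK}(C^*_r(G),C^*_r(G))$ says nothing about the product $[s]\cdot\eta_\mu$ in $\mathrm{KK}(C^*_\mu(G),C^*_\mu(G))$ without already knowing invertibility. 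The missing idea is that the $\mu$-descent of the K-amenable module representing $1_G$ lands in $\mathrm{KK}(C^*_\mu(G),C^*_\mu(G))$ and equals $1_{C^*_\mu(G)}$ there---this is exactly what the ideal property buys you, since it makes the $\mu$-crossed-product functor compatible with descent. Because $\gamma\prec\lambda_G$, the \emph{left} action on this descended module factors through $C^*_r(G)$, so $1_{C^*_\mu(G)}$ itself factors as $[s]\cdot\eta_\mu$ for some $\eta_\mu\in\mathrm{KK}(C^*_r(G),C^*_\mu(G))$. This gives $[s]\cdot\eta_\mu=1_{C^*_\mu(G)}$ directly, not via the one-sided relation you wrote; together with your easy half $\xi\cdot[s]=1_{C^*_r(G)}$ (and a standard one-sided/two-sided inverse argument, or simply the analogous factorisation of $1_{C^*_r(G)}$), you get that $[s]$ is a KK-equivalence, and then two-out-of-three handles $[q]$.
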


\subsection{Group $C^*$-algebras and Gelfand pairs}
Let $G$ be a locally compact group and $K$ a compact subgroup of $G$. A function $\varphi \colon G \to \mathbb{C}$ is said to be $K$-bi-invariant if $\varphi(k_1sk_2)=\varphi(s)$ for all $s \in G$ and $k_1,k_2 \in K$. The pair $(G,K)$ is called a Gelfand pair if the ${}^*$-subalgebra $C_c(K \backslash G / K)$ of $C_c(G)$ consisting of all $K$-bi-invariant elements of $C_c(G)$ is commutative. 

Given a Gelfand pair $(G,K)$, a non-trivial $K$-bi-invariant Radon measure $\chi \colon C_c(G) \to \C$ on $G$ is called spherical if it restricts to an algebra
homomorphism on $C_c(K\backslash G/K)$. Each spherical Radon measure $\chi$ on $G$ is absolutely continuous with respect to the Haar measure $\mu_G$, and there is a $K$-bi-invariant, continuous function $\varphi\in C(G)$ with $\varphi(e) = 1$ such that
\begin{align*}
	\chi(f) = \int f(s)\varphi(s^{-1})\mathrm{d}\mu_G(s)
\end{align*}
for all $f\in C_c(G)$. A $K$-bi-invariant continuous function
	$\varphi\in C(G)$ with $\varphi(e) = 1$ such that the map $C_c(K\backslash G/K)\to \C, \, f \mapsto \int f(s)\varphi(s^{-1})\mathrm{d}\mu_G(s)$ forms an algebra homomorphism is called a spherical function for $(G,K)$.
	
An irreducible unitary group representation $\pi \colon G \to \mathcal{U}(\mathcal{H})$ is called spherical (or class one) for the Gelfand pair $(G, K)$ if the vector space $\mathcal{H}^K$ of $K$-invariant vectors is one-dimensional. We write $(\widehat{G}_K)_1$ for the (equivalence classes of) spherical representations of $G$. The space $(\widehat{G}_K)_1$ is called the spherical unitary dual of $(G,K)$. For details on Gelfand pairs and spherical functions, we refer to \cite{MR2328043}.

Recall that the universal group $C^*$-algebra $C^*(G)$ of $G$ comes together with a canonical unitary group representation $\iota_G \colon G \to \mathcal{UM}(C^*(G))$ (where $\mathcal{M}(C^*(G))$ denotes the multiplier algebra of $C^*(G)$) given by
\[
	(\iota_G(s)f)(t)=f(s^{-1}t)
\]
for $f \in C_c(G)$ and $s,t \in G$. The representation $\iota_G$ is called the universal group representation.

Let $G$ be a locally compact group, and let $K$ be a compact subgroup of $G$. Now, let $\mu_K$ be the normalized Haar measure on $K$. By $p_K \in \mathcal{M}(C^*(G))$, let us denote the orthogonal projection defined by
	\begin{align*}
		p_K x = \int \iota_G(k)x\, \mathrm{d}\mu_K(k)
	\end{align*}
	for $x\in C^*(G)$. Moreover, for a general group $C^*$-algebra $C^*_\mu(G)$
	of $G$, we denote by $p_{K,\mu}\in \mathcal{M}(C^*_\mu(G))$ the orthogonal
	projection $\overline{q}(p_K)$, where $q\colon C^*(G)\to C^*_\mu(G)$ is the canonical quotient map and $\overline{q}$ denotes the unique extension of $q$ to a ${}^*$-homomorphism on $\mathcal{M}(C^*(G))$.
	
	We write $C_\mu^*(K\backslash G/K)$ for the completion of $C_c(K\backslash G/K)$ in $C^*_\mu(G)$. It is a commutative sub-C*-algebra of $C^*_\mu(G)$ whenever $(G,K)$ is a Gelfand pair. Moreover, the following holds.
	
\begin{prp}\label{GA:gelfand:abelian_proj}
For every group $C^*$-algebra $C_\mu^*(G)$ of $G$, we have
\[
	C^*_\mu(K\backslash G/K) = p_{K,\mu}C_\mu^*(G)p_{K,\mu}.
\]
In particular, $p_{K,\mu}$ is an abelian projection if $(G,K)$ is a Gelfand pair.
\end{prp}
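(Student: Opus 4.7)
The plan is to verify both inclusions $C^*_\mu(K\backslash G/K) \subseteq p_{K,\mu} C^*_\mu(G) p_{K,\mu}$ and $p_{K,\mu} C^*_\mu(G) p_{K,\mu} \subseteq C^*_\mu(K\backslash G/K)$ by reducing everything to what $p_K$ does to functions in $C_c(G)$, and then propagate through the quotient $q\colon C^*(G) \twoheadrightarrow C^*_\mu(G)$ via $\overline{q}$ on multiplier algebras.

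First I would set up a concrete description of the action of $p_K$ on $C_c(G)$. Since $(\iota_G(k) f)(t) = f(k^{-1}t)$ for $f \in C_c(G)$, the left multiplier action of $p_K$ averages $f$ on the left: $(p_K f)(t) = \int_K f(k^{-1}t)\,\mathrm{d}\mu_K(k)$, and by passing through $p_K^* = p_K$ together with the identification of right multiplication with convolution by $\delta_k$, the right action gives $(fp_K)(t) = \int_K f(tk)\,\mathrm{d}\mu_K(k)$. Left-invariance of Haar on $K$ makes $p_Kf$ left-$K$-invariant, right-invariance makes $fp_K$ right-$K$-invariant, and $p_Kfp_K$ is therefore bi-$K$-invariant with support contained in the compact set $K\cdot\mathrm{supp}(f)\cdot K$. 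In particular $p_Kfp_K \in C_c(K\backslash G/K)$ for every $f \in C_c(G)$, and conversely $p_Kfp_K = f$ whenever $f$ is already bi-$K$-invariant.

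For the first inclusion, observe that any $f \in C_c(K\backslash G/K)$ satisfies $f = p_K f p_K$ in $C^*(G)$, so by applying the ${}^*$-homomorphism $\overline{q}$ we obtain $q(f) = p_{K,\mu}\, q(f)\, p_{K,\mu} \in p_{K,\mu} C^*_\mu(G) p_{K,\mu}$. Since the corner $p_{K,\mu} C^*_\mu(G) p_{K,\mu}$ is norm-closed and $C^*_\mu(K\backslash G/K)$ is by definition the closure of $q(C_c(K\backslash G/K))$, the inclusion $C^*_\mu(K\backslash G/K) \subseteq p_{K,\mu} C^*_\mu(G) p_{K,\mu}$ follows. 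For the reverse inclusion, take $x \in p_{K,\mu} C^*_\mu(G) p_{K,\mu}$ and approximate $x = p_{K,\mu} x p_{K,\mu}$ in norm by elements $q(f_i)$ with $f_i \in C_c(G)$. Using that $\overline{q}$ is a ${}^*$-homomorphism and that $\overline{q}(p_K) = p_{K,\mu}$, we get
\[
	p_{K,\mu}\, q(f_i)\, p_{K,\mu} = q(p_K f_i p_K) \longrightarrow p_{K,\mu} x p_{K,\mu} = x,
\]
and each $p_K f_i p_K$ lies in $C_c(K\backslash G/K)$ by the first paragraph. Hence $x$ lies in the closure of $q(C_c(K\backslash G/K))$, which is exactly $C^*_\mu(K\backslash G/K)$.

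For the \emph{in particular} clause, if $(G,K)$ is a Gelfand pair then $C_c(K\backslash G/K)$ is a commutative ${}^*$-subalgebra of $C_c(G)$, so its closure $C^*_\mu(K\backslash G/K)$ inside $C^*_\mu(G)$ is commutative; combining this with the equality just established shows that the corner $p_{K,\mu} C^*_\mu(G) p_{K,\mu}$ is commutative, i.e.~$p_{K,\mu}$ is an abelian projection. I do not expect any genuine obstacle here — the only subtle point is the verification that $p_K$ acts on the right of $f \in C_c(G)$ by averaging over $K$ and preserves compact support, together with the straightforward compatibility $\overline{q}(p_K x p_K) = p_{K,\mu}\,\overline{q}(x)\,p_{K,\mu}$; once these are recorded, the double inclusion argument is entirely formal.
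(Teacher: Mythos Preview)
Your argument is correct and follows essentially the same approach as the paper's proof: both establish that conjugation by $p_K$ (respectively $p_{K,\mu}$) acts on $C_c(G)$ by bi-averaging over $K$, deduce $p_K C_c(G) p_K = C_c(K\backslash G/K)$, and then pass to closures using continuity of the compression. The only cosmetic difference is that the paper computes directly inside $C^*_\mu(G)$ (using that $q$ is the identity on $C_c(G)$), whereas you first work in $C^*(G)$ and then push everything through $\overline{q}$; the content is the same.
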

\begin{proof}
		Let $q \colon C^*(G)\to C^*_\mu(G)$ be the canonical
		quotient map, and let $\iota_{G,\mu} := \overline{q}\circ\iota_G$, where
		$\overline{q} \colon \mathcal{M}(C^*(G))\to \mathcal{M}(C^*_\mu(G))$ is as above.
		For $x\in C_\mu^*(G)$, let $x'\in C^*(G)$ be an element satisfying $q(x') = x$.
		We have
		\begin{align*}
			p_{K,\mu}x = q(p_K x')= q\left( \int \iota_G(k)x'\, \mathrm{d}\mu_K(k)\right) = \int \iota_{G,\mu}(k)x\, \mathrm{d}\mu_K(k).
		\end{align*}
		Furthermore, since $q$ is the identity on $C_c(G)$, for all $f\in C_c(G)$, 
		we have $\iota_{G,\mu}(k)f(t) = f(k^{-1}t)$ and $f\iota_{G,\mu}(k) (t) = f(tk)$
		for all $k\in K$ and $t\in G$. Hence, the elements
		\[
		\int \iota_{G,\mu}(k)f\, \mathrm{d}\mu_K(k)  = p_{K,\mu}f
		\]
		and 
		\[
		\int f\iota_{G,\mu}(k)\,\mathrm{d}\mu_K(k) = fp_{K,\mu}
		\]
		belong to $C_c(G)$. Now the $K$-bi-invariance of $\mu_K$ implies that $p_{K,\mu}fp_{K,\mu}$ lies in $C_c(K\backslash G/K)$. 
		In other words, we have
		\[
			p_{K,\mu}C_c(G)p_{K,\mu}\subset C_c(K\backslash G/K).
		\]
		The argument above also shows that for a function $f\in C_c(K\backslash G/K)$, we have
		$p_{K,\mu}fp_{K,\mu} = f$. This implies
		$C_c(K\backslash G/K)\subset p_{G,\mu}C_c(G)p_{G,\mu}$.
		
		The compression $E_{K,\mu}\colon C^*_\mu(G)\to C^*_\mu(G),\,
		x\mapsto p_{K,\mu}xp_{K,\mu}$ of the identity map on $C_\mu^*(G)$
		by $p_{K,\mu}$ is contractive. Hence, we have
		$p_{K,\mu}C^*_\mu(G)p_{K,\mu}= E_{K,\mu}(C^*_\mu(G))
		\subset C^*_\mu(K\backslash G/K)$. We also have
		$C^*_\mu(K\backslash G/K)\subset p_{K,\mu}C^*_\mu(G)p_{K,\mu}$,
		which completes the proof.
	\end{proof}
\begin{rmk}\label{GA:rem:GP_spherical_ideal_morita_eq}
	Let $C^*_\mu(G)$ be a group $C^*$-algebra of $G$. We denote by
	$C^*_\mu(G,K)$ the closed ideal in $C^*_\mu(G)$
	generated by the projection $p_{K,\mu}$ in $\mathcal{M}(C^*_\mu(G))$ and
	call it the spherical ideal of $C^*_{\mu}(G)$ for the Gelfand pair
	$(G,K)$. The left ideal $C^*_\mu(G)p_{K,\mu}$ and the right ideal $p_{K,\mu} C_\mu^*(G)$ form an imprimitivity $C^*_\mu( G,K)$-$C^*_\mu(K\backslash G/K)$-bi\-mo\-dule and $C^*_\mu(K\backslash G/K)$-$C^*_\mu( G,K)$-bi\-mo\-dule, respectively. They extend, in a canonical way, to a partial imprimitivity $C_\mu^*(G)$-$C^*_\mu(K\backslash G/K)$-bimodule $C^*_\mu(G/K)$ and $C^*_\mu(K\backslash G/K)$-$C_\mu^*(G)$-bi\-mo\-dule $C^*_\mu(K\backslash G)$, respectively.
		 
		 Note that the spectrum $\reallywidehat{C^*_\mu( G,K)}$ of
		 $C^*_\mu( G,K)$ can be
		 be identitfied with the open subset
		 $\lbrace [\pi]\in \widehat{C^*_\mu(G)}\mid \pi(C^*_\mu( G,K)) \neq 0 \rbrace$ of
		 $\widehat{C^*_\mu(G)}$ via
		 the topological embedding
		 \begin{align*}
		 	\reallywidehat{C^*_\mu( G,K)}\to \widehat{C_\mu^*(G)},\,
		 	[\pi]\mapsto [\overline{\pi}],
		 \end{align*}
		 where $\overline{\pi}\colon C_\mu^*(G)\to \mathcal{B}(\mathcal{H})$ is the
		 unique extension of $\pi\colon C^*_\mu( G,K) \to \mathcal{B}(\mathcal{H})$.
	\end{rmk}

\section{Trees and their automorphism groups}
\subsection{Trees and their boundaries}
A tree $T$ is an undirected, connected, acyclic graph (without loops and without multiple edges). We write $V(T)$ for the vertex set and $E(T) \subset \lbrace \lbrace x_0,x_1 \rbrace
		\subset V(T)\mid \vert \lbrace x_0,x_1 \rbrace \vert = 2\rbrace$ for the set of
	edges. The degree $\mathrm{deg}_T(x)$ of each
	vertex $x\in V(T)$ is the number of all edges containing
	$x$. A tree $T$ is locally finite if the degree of every vertex of $T$ is finite. 
	A particularly important class of locally finite trees is the class of semi-homogeneous trees. A tree is called	semi-homogeneous of degree $(d_0,d_1)\in \N^2$ if each vertex is of degree
	$d_0$ or $d_1$ and if for each edge $e$ of $T$, we have
	$\lbrace \mathrm{deg}_T(x) \mid x\in e \rbrace = \lbrace d_0,d_1 \rbrace$.
	Furthermore, a semi-homogeneous tree of degree $(d_0,d_1)$ is said
	to be homogeneous of degree $d_0$ if $d_0 = d_1$.
	
	Given a tree $T$, its vertex set $V(T)$ admits a canonical metric, turning $V(T)$ into a metric space. Indeed, recall that a path $c$ from $x\in V(T)$ to $y\in V(T)$
	is a finite sequence $c\colon \lbrace 0,\dots,n\rbrace \to V(T)$, with $n\in \N_0$, such that
	$c(0) = x$, $c(n) = y$ and $\lbrace c(i-1), c(i)\rbrace \in E(T)$ for all
	$1\leq i \leq n$. For all $x,y \in V(T)$, there is a unique injective path from
	$x$ to $y$. Such a path is called a geodesic and its range is called a
	geodesic segment and is denoted by $[x,y]$. The metric $d_c\colon V(T)\times V(T)\to [0,\infty)$ on $V(T)$, also called the shortest-path metric,
	is defined by
	\begin{align*}
		d_c(x,y) = \vert\, [x,y]\,\vert -1
	\end{align*}
	for $x,y\in V(T)$,
	where $\vert \, [x,y]\, \vert$ denotes the cardinality of the set $[x,y]$.
	
	The metric space $(V(T),d_c)$ has a canonical compactification, which we also recall at this point. An isometry $c\colon \N_0\to V(T)$, where $\N_0$ is equipped with the canonical distance, is called a ray or an infinite chain. Two rays $c$ and $c'$ in $T$ are called cofinal (denoted $\sim_{\textrm{cofin}}$) if there exists an $l_0 \in \N_0$ and an $m \in \Z$ such that for all $l \geq l_0$, we have $c({l+m})=c'(l)$. Cofinality defines an equivalence relation on the set of rays.

The boundary $\partial T$ of a tree $T$ is defined as the set of equivalence classes of rays with respect to the relation of cofinality:
\[
		\partial T:=\{c\colon\N_0 \to V(T) \mid \alpha \textrm{ is a ray in $T$ }\}/\sim_{\textrm{cofin}}.
\]
After fixing a vertex $x$ in $T$, every boundary point $\omega\in\partial T$ can be represented by the unique chain starting at $x$ in the equivalence class $\omega$. The range of this chain will be denoted $[x,\omega)$, and $\omega_i(x)$ will denote the $i$-th vertex of $[x,\omega)$, starting with $\omega_0(x)=x$.

The shortest-path metric induces the discrete topology on $T$. The set $T \cup \partial T$ carries a natural compact topology, with respect to which $T$ is dense in $T \cup \partial T$. Indeed, with every $\omega\in\partial T$, we associate the neighbourhood basis $\left\{E_x(y) \mid y\in[x,\omega)\right\}$, where $E_x(y)$ consists of all vertices and endpoints of infinite chains including $y$ but no other vertex from $[x,y]$, and we let $\tau$ denote the topology on $T\cup\partial T$ induced by these neighbourhood bases. Setting $\Omega_x(y)=E_x(y)\cap\partial T$, the sets $\Omega_x(y)$, with $y \in [x,\omega)$, form a neighbourhood basis of $\omega\in\partial T$ with respect to the relative topology on $\partial T$. These topologies do not depend on the choice of the vertex $x$. For $x \in V(T)$ and $n \in \N$, the set $\left\{\Omega_x(y) \mid d(x,y)=n\right\}$ is a partition of $\partial T$ into compact open sets.

For details on trees and their boundaries, we refer to \cite{MR1152801}.

\subsection{Automorphism groups of trees}
Let $T$ be a locally finite tree, and let $\mathrm{Aut}(T)$ denote its automorphism group. With respect to the shortest-path metric, every automorphism of $T$ is an isometry of $(V(T),d_c)$ and vice versa. As an isometry group of a proper metric space, $\mathrm{Aut}(T)$ is, equipped with the compact-open topology, a second countable, locally compact group. In fact, every closed subgroup $G$ of $\mathrm{Aut}(T)$ is a locally compact, totally disconnected group. Indeed, for every finite, complete subtree $S$ of $T$, the $S$-fixing group 
\begin{equation*}
	G_S := \{ g \in G \mid gx=x\; \forall x \in V(S) \}.
\end{equation*}
is a compact open subgroup of $G$, and the set
\begin{align*}
	\lbrace G_S\mid S \mbox{ finite, complete subtree of } T \rbrace
\end{align*}
forms a neighbourhood basis of the identity element in $G$.

We first recall the following result, which ensures that whenever $T$ is a locally finite tree and $G$ acts transitively on the boundary $\partial T$ of $T$, the tree is automatically semi-homogeneous. For a proof, we refer to \cite[Proposition 4]{amann}.
\begin{prp} \label{prp:semihomogeneous}
Let $T$ be a locally finite tree such that its boundary $\partial T$ consists of at least three elements. Let $G$ be a non-compact, closed subgroup of $\mathrm{Aut}(T)$ that acts transitively on $\partial T$. Then for every vertex $x$ of $T$, the stabiliser group $G_x$ acts transitively on $\partial T$.

Moreover, the tree $T$ is semi-homogeneous and $G$ has at most two orbits. In case $G$ has two orbits, the $G$-orbits in $V(T)$ are $\lbrace z \in V(T) \mid d_c(z,x)  \text{ is even} \rbrace$ and $\lbrace z \in V(T) \mid d_c(z,x) \text{ is odd} \rbrace$ for some vertex $x\in V(T)$.
\end{prp}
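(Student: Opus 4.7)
The plan is to prove the three assertions in order: the existence of a hyperbolic element in $G$, the transitivity of the vertex stabiliser $G_x$ on $\partial T$, and the resulting structural statements on semi-homogeneity and orbits.

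\emph{Existence of hyperbolic elements.} Since $G$ is closed and non-compact in $\mathrm{Aut}(T)$, and since the stabilisers of a vertex or of an edge-inversion in $\mathrm{Aut}(T)$ are compact, $G$ cannot fix a vertex or invert an edge. Combined with the hypothesis that $G$ acts transitively on $\partial T$ with $|\partial T| \geq 3$ --- which rules out a fixed end or a $G$-invariant pair of ends --- the standard trichotomy for group actions on trees (Serre/Tits) forces $G$ to contain a hyperbolic element $g_0$, with axis $A$ and translation length $\ell \geq 1$. Conjugating $g_0$ by elements of $G$ that send an endpoint of $A$ to a prescribed $\omega \in \partial T$ (which exist by boundary-transitivity) then yields hyperbolic elements of $G$ with any prescribed attracting end.

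\emph{Transitivity of $G_x$ on $\partial T$.} Fix $x \in V(T)$ and $\omega \in \partial T$. The orbit map $G \to \partial T$, $g \mapsto g\omega$, is continuous and --- since $G$ is second countable and $\partial T$ is a locally compact Hausdorff space on which $G$ acts transitively --- open. Because $G_x$ is open in $G$, the orbit $G_x \cdot \omega$ is thus open in $\partial T$, and since $G_x$ is compact, the orbit is also closed; hence it is clopen. Consequently $G_x$ has only finitely many (clopen) orbits on $\partial T$. The remaining step --- and the main obstacle --- is to rule out more than one orbit. I would attack this by a ping-pong-style argument: assuming two distinct clopen orbits $O_1,O_2$, pick a hyperbolic element $g \in G$ (from the first step) with attracting end in $O_1$ and repelling end in $O_2$, and use the north--south dynamics of $g$ on $\partial T$ together with the $G_x$-invariance of the partition into clopen orbits to derive a contradiction.

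\emph{Semi-homogeneity and orbit structure.} Once $G_x$ acts transitively on $\partial T$, it permutes the clopen partition $\{\Omega_x(y) : y \in N(x)\}$ transitively, and therefore $G_x$ is transitive on the neighbour set $N(x)$; in particular, all neighbours of $x$ share a common degree. Applying this at every vertex shows that $T$ is semi-homogeneous of some degree $(d_0,d_1)$. For the orbit structure, if a neighbour of $x$ lies in $G \cdot x$, then transitivity of $G_x$ on $N(x)$ puts all of $N(x)$ into $G \cdot x$, and an induction on distance shows that $G$ acts transitively on $V(T)$. Otherwise $N(x) \subset G \cdot y$ for some neighbour $y \notin G \cdot x$, and the same inductive argument combined with the bipartite structure of $T$ identifies $G \cdot x$ and $G \cdot y$ with the two parity classes $\{z \in V(T) : d_c(z,x) \text{ even}\}$ and $\{z \in V(T) : d_c(z,x) \text{ odd}\}$, completing the proof.
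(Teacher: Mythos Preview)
The paper does not supply its own proof of this proposition; it simply refers to \cite[Proposition~4]{amann}. So there is no in-paper argument to compare against, and your sketch should be judged on its own merits.

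Your overall architecture is sound, and the first and third steps are essentially correct. The existence of a hyperbolic element follows from the standard trichotomy once you have ruled out a global fixed vertex, an inverted edge, a fixed end, and an invariant pair of ends, all of which your hypotheses exclude. Likewise, once $G_x$ is known to act transitively on $\partial T$, your derivation of semi-homogeneity and of the orbit structure via the partition $\{\Omega_x(y):y\in N(x)\}$ and induction on distance is the right argument.

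The genuine gap is in the second step, precisely where you flag ``the main obstacle''. You correctly show that each $G_x$-orbit on $\partial T$ is clopen (open because $G_x$ is open in $G$ and the orbit map $G\to\partial T$ is open; closed because $G_x$ is compact), hence there are finitely many. But the ping-pong sketch does not close the argument. Two issues: first, to produce a hyperbolic $g\in G$ with attracting end in $O_1$ and repelling end in $O_2$ you would already need something like $2$-transitivity of $G$ on $\partial T$; conjugating a single hyperbolic by elements of $G$ lets you prescribe one end, not both. Second, and more seriously, the hyperbolic $g$ you pick is typically \emph{not} in $G_x$, so it does not preserve the $G_x$-orbit partition. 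North--south dynamics tells you that $g^n$ pushes most of $\partial T$ towards $g^+$, but since the clopen pieces $O_i$ are only $G_x$-invariant, there is no contradiction in $g^n$ moving points of $O_2$ into $O_1$. You would need to manufacture, from $g$ and elements of $G_x$, an element of $G_x$ itself that violates the partition, and your sketch does not indicate how to do this. This step is exactly where the substance of Amann's argument lies, and it requires more than what you have written.
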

The following result is a consequence of Proposition \ref{prp:semihomogeneous}. It goes back to \cite{MR0578650} (see also \cite[Section II.4]{MR1152801}, \cite[Proposition 7]{amann}).
\begin{cor}\label{cor:tree_GP}
	Let $G$ be a non-compact, closed subgroup of $\Aut(T)$ which acts transitively on $\partial T$. Furthermore, let $x\in V(T)$ be a vertex. Then for every $s\in G$, the identity
	\begin{align*}
		\lbrace t\in G \mid d_c(x,tx) = d_c(x,sx) \rbrace = G_x s G_x
	\end{align*}
	holds.
	In particular, $(G,G_x)$ is a Gelfand pair.
\end{cor}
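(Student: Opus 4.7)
The plan is to split the proof into two parts: first establishing the double coset identity, and then using it to deduce the Gelfand pair property via a standard symmetry argument.

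For the easy inclusion $G_x s G_x \subseteq \lbrace t\in G \mid d_c(x,tx) = d_c(x,sx) \rbrace$, if $t = k_1 s k_2$ with $k_1,k_2 \in G_x$, then $tx = k_1 s x$, and since $k_1$ acts as an isometry fixing $x$, we get $d_c(x,tx) = d_c(x, k_1 sx) = d_c(k_1^{-1}x, sx) = d_c(x,sx)$. The reverse inclusion reduces, via a standard argument, to showing that $G_x$ acts transitively on each sphere $S_n(x) := \lbrace y\in V(T)\mid d_c(x,y) = n\rbrace$. Indeed, granted this, if $d_c(x,tx) = d_c(x,sx)$, one finds $k\in G_x$ with $k\cdot sx = tx$; then $(ks)^{-1}t$ fixes $x$, so $t \in ks\,G_x \subseteq G_x s G_x$.

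To prove sphere-transitivity, I would invoke Proposition \ref{prp:semihomogeneous}, which gives that $G_x$ acts transitively on $\partial T$. Given $y_1, y_2 \in S_n(x)$, I would extend the finite geodesic $[x,y_i]$ to an infinite ray in $T$; this is possible because every vertex of $T$ has degree at least $2$ by semi-homogeneity (and one must rule out the trivial cases $|\partial T| \leq 2$ separately, where the statement can be verified directly). This gives boundary points $\omega_1, \omega_2 \in \partial T$ such that the $n$-th vertex of $[x,\omega_i)$ is $y_i$. Applying Proposition \ref{prp:semihomogeneous} to obtain $k \in G_x$ with $k\omega_1 = \omega_2$, the isometry $k$ maps $[x,\omega_1)$ onto $[kx, k\omega_1) = [x,\omega_2)$ and hence $y_1$ to $y_2$.

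For the Gelfand pair statement, since the inversion $\sigma(s) = s^{-1}$ satisfies $d_c(x, s^{-1}x) = d_c(sx, x) = d_c(x, sx)$, the double coset identity just proven yields $s^{-1} \in G_x s G_x$, i.e.\ $\sigma$ preserves every $G_x$-double coset. As $\sigma$ is an involutive anti-automorphism of $G$, Gelfand's trick then gives the commutativity of the convolution algebra $C_c(G_x\backslash G/G_x)$: for $f,g$ that are $G_x$-bi-invariant, one has $\widetilde{f}(s) := f(s^{-1}) = f(s)$, and a direct manipulation of the convolution integral together with the anti-multiplicativity of $\sigma$ gives $f*g = g*f$. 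The main obstacle is step two: correctly deriving transitivity on spheres from transitivity on the boundary, and carefully handling the geodesic-to-ray extension so that the argument does not silently assume anything beyond semi-homogeneity.
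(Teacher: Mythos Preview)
Your proposal is correct and follows essentially the same approach as the paper: both invoke Proposition~\ref{prp:semihomogeneous} to get transitivity of $G_x$ on $\partial T$, derive sphere-transitivity from it, use this to obtain the double coset identity, and then conclude via $s^{-1}\in G_x s G_x$ and Gelfand's criterion. The only difference is that the paper treats the passage from boundary-transitivity to sphere-transitivity as immediate, whereas you spell it out via the geodesic-to-ray extension argument.
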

\begin{proof}
	Let $s\in G$, and let $t\in G$ be such that $d_c(x,tx) = d_c(x,sx)$. By Proposition \ref{prp:semihomogeneous}, we know that $G_x$ acts transitively on $\partial T$. Hence, it acts transitively on the sphere $\lbrace y\in V(T)\mid d_c(x,y)=d_c(x,sx) \rbrace$. Therefore, there is an
	element $k\in G_x$ with $tx=ksx$. This implies that $t\in G_x s G_x$ and therefore
	$\lbrace t\in G \mid d_c(x,tx)=d_c(x,sx)\rbrace \subset G_x s G_x$. The other
	inclusion is straightforward. Moreover, the identity that we just proved
	directly implies that $s^{-1}\in G_x s G_x$ for all $s\in G$. It is well known that this implies that $(G,G_x)$ is a Gelfand pair (see e.g.~\cite[Proposition 8.1.3]{MR2328043}).
\end{proof}

\subsection{Tits' independence property}
Let $T$ be a semi-homogeneous tree. Let  $e \in E(T)$ be an edge of $T$, and let $\pi_{e}\colon V(T)\to e$ be
the nearest point projection onto $e$.
The $e$-fixing group 
\[
	G_{e} = \lbrace s\in G\mid sx = x \; \forall x\in e\rbrace
\]
of a subgroup $G$ of $\mathrm{Aut}(T)$ preserves, for each $x\in e$, the set 
\[
	T_x=\pi_{e}^{-1}(x) =\lbrace  y\in V(T)\mid d_c(y,x)\leq d_c(y,z) \; \forall z\in e\rbrace.
\]
Let $F_x$ be the image of the restriction map 
\[
	\Phi_x\colon G_{e}\to \mathrm{Sym}(T_x),\, s\mapsto s\vert_{T_x}.
\] 
Then the map $ \Phi_{e}\colon G_{e}\to \prod_{x\in e}F_x,\, s\mapsto \Phi_x(s)$
is an injective group homomorphism.
\begin{dfn}
	A closed subgroup $G$ of $\mathrm{Aut}(T)$ has Tits' independence property if for each edge $e\in E(T)$, the homomorphism
	\begin{align*}
	\Phi_{e}\colon G_{e}\to \prod_{x\in e}F_x
	\end{align*}
	is an isomorphism.
\end{dfn}
This property goes back to Tits \cite{MR0299534}. The definition given here is a characterisation of the property for \emph{closed} subgroups of $\mathrm{Aut}(T)$ (see \cite[Section 1.2]{amann}). Tits introduced this property in order to study simple subgroups of $\mathrm{Aut}(T)$. However, the independence property also has far-reaching consequences for the asymptotic behaviour of matrix coefficients
of certain irreducible unitary group representations, as we will see later. 

The automorphism group $\mathrm{Aut}(T)$ as well as the trivial group have
the independence property. Less trivial examples are provided by the 
Burger-Mozes universal groups \cite{MR1839488}, which we briefly recall here.
\begin{exm}
	We assume $T$ to be a homogeneous tree of degree $d=d_0=d_1\geq 3$. Let $l\colon E(T)\to \lbrace 1,\cdots, d\rbrace$ be a legal
	labelling of $T$, i.e.~the map $l_x:=l\vert_{E(x)}\colon E(x)\to \lbrace 1,\ldots,d\rbrace$,
	where $E(x)$ denotes the set of edges containing $x\in V(T)$, is a bijection, and
	$l_x(e) = l_y(e)$ for every edge $e=\lbrace x,y\rbrace \in E(T)$.
	
	For every automorphism $s\in \mathrm{Aut}(T)$ and any vertex
	$x\in V(T)$ the composition $c(s,x)=\ell_{gx} \circ s \circ \ell_x^{-1}$ defines
	an element in the symmetric group $\mathrm{Sym}(\lbrace 1,\ldots, d\rbrace)$. 
	
	Let $F$ be a subgroup of $\mathrm{Sym}(\lbrace 1,\ldots,d\rbrace)$. Then
	\begin{align*}
		U^{(\ell)}(F) = \lbrace s\in \mathrm{Aut}(T)\mid
		c(s,x)\in F \; \forall x\in V(T) \rbrace
	\end{align*}
	forms a closed subgroup of $\mathrm{Aut}(T)$. The group
	$U^{(\ell)}(F)$ acts transitively on the vertices of $T$ and has Tits'
	independence property. Moreover, the group $U^{(\ell)}(F)$ acts transitively on the boundary of $T$ if and only if $F$ acts
	$2$-transitively on $\lbrace 1,\ldots,d\rbrace$. For details on these groups, we refer to \cite{MR1839488}.
\end{exm}
\begin{exm}
Let us mention the existence of closed
 subgroups of $\mathrm{Aut}(T)$ that do not satisfy Tits' independence property. One class of examples is given by the projective special linear groups $\mathrm{PSL}(2,\mathbb{Q}_p)$ over the $p$-adic numbers acting on its Bruhat-Tits tree (see \cite[Example 6.33]{garridoAutomorphismGroupsTrees2018}).
\end{exm}

\section{Spherical sub-$C^*$-algebras of $L^{p+}$-group $C^*$-algebras} \label{sec:sphericalsubcstaralgebras}

In this section, let $T$ be a semi-homogeneous tree of degree $(d_0,d_1)$ with $d_0,d_1\geq 2$ and $d_0+d_1 \geq 5$, and let $G$ be a non-compact, closed subgroup $G$ of $\Aut(T)$ that acts transitively on the boundary $\partial T$ of $T$. Fix a vertex $o\in V(T)$, let
\[
	K:=G_o=\lbrace s\in G\mid so=o\rbrace
\]
be the stabiliser group of $o$, and let $\mu_G$ be the Haar measure on $G$ satisfying $\mu_G(K) =1$.

Without loss of generality, we assume that $\mathrm{deg}_T(o) = d_0$. Furthermore, let
\begin{align*}
	\delta = (d_0-1)(d_1-1),
\end{align*}
and let $\kappa$ be the number of $G$-orbits in $V(T)$. By Proposition \ref{prp:semihomogeneous}, the action of $G$ has at most
two orbits in $V(T)$. Hence, we have $\kappa \in \lbrace 1,2\rbrace$ and
$Go=\lbrace x\in V(T)\mid d_c(o,x) \in \kappa \N_0\rbrace$.

This section is mainly aimed at describing the spherical sub-$C^*$-algebras 
$C^*_{L^{p+}}(K\backslash G/K)$ of $C^*_{L^{p+}}(G)$, where $p$ belongs to $[2,\infty]$. The main theorems of this article are also proved in this section.

\subsection{Asymptotic behaviour of spherical functions}
For the description of the spherical sub-$C^*$-algebras, we first need an understanding of the asymptotic behaviour of the spherical functions for the Gelfand pair $(G,K)$.

Let  $\vert \cdot \vert \colon G\to [0,\infty)$ be the function
given by $\vert s\vert= d_c(so,o)$ for $s\in G$.
Note that $\vert \cdot\vert$ is $K$-bi-invariant and only takes values
in $\kappa\N_0$.
\begin{prp}\label{GA:prop:spherical_algebra_auto_grp}
	For every $K$-bi-invariant function $f\in C_c(K\backslash G/K)$, there is
	a unique function $\dot{f}\in C_c(\kappa\N_0)$ such that $f = \dot{f}\circ \vert \cdot \vert$.
	The map
	\begin{align*}
		C_c(K\backslash G/K)\to C_c(\kappa \N_0), \; f\mapsto \dot{f}
	\end{align*}
	is an isomorphism of vector spaces.	Furthermore, for every $f\in C_c(K\backslash G/K)$, the following holds:
	\begin{align*}
	\int f\mathrm{d}\mu_G = \dot{f}(0)+ \frac{d_0}{d_0-1}\sum_{r\in \kappa\N} \delta^{\frac{r}{2}}\, \dot{f}(r)
	\end{align*}
\end{prp}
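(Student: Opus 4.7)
The first two assertions should be read off immediately from Corollary \ref{cor:tree_GP}. By that corollary, the double coset $KsK$ coincides with $\{t \in G \mid |t| = |s|\}$, so $|\cdot|$ factors through a bijection $K\backslash G/K \to \kappa\N_0$ (surjectivity uses that $Go = \{x \in V(T) \mid d_c(o,x) \in \kappa\N_0\}$ realises every admissible distance). Thus a $K$-bi-invariant $f$ on $G$ is constant on the level sets of $|\cdot|$, which yields a unique $\dot f\colon \kappa\N_0\to\C$ with $f = \dot f\circ|\cdot|$. The function $|\cdot|$ is continuous (indeed locally constant, because $|sk|=|s|$ for all $k\in K$ and $K$ is open), so $f$ is automatically continuous whenever $\dot f$ is arbitrary. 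The compact support of $f$ translates into the finite support of $\dot f$, since each double coset $KsK$ is itself compact (a finite union of left $K$-cosets, because $Kso$ lies in the finite sphere of radius $|s|$ around $o$), and a union of double cosets is compact if and only if it is a finite union. This gives the claimed linear isomorphism.

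For the integral formula, I would integrate out over the double cosets:
\[
\int_G f\,\mathrm d\mu_G \;=\; \sum_{r\in\kappa\N_0}\dot f(r)\,\mu_G\!\left(\{s\in G\mid |s|=r\}\right).
\]
The plan is to identify the measure of the $r$-th double coset with the number $N(r)$ of vertices of $T$ at distance $r$ from $o$. Pick $s_r\in G$ with $|s_r|=r$ and write $Ks_rK$ as a disjoint union of left $K$-cosets; by left invariance and the normalisation $\mu_G(K)=1$, each coset has measure $1$, so $\mu_G(Ks_rK)$ equals the number of left $K$-cosets in $Ks_rK$, which is $|Ks_r o|$. Using Proposition \ref{prp:semihomogeneous}, the stabiliser $K=G_o$ acts transitively on $\partial T$ and hence on every sphere $\{y\in V(T)\mid d_c(o,y)=r\}$ (a boundary-transitivity argument, as used in the proof of Corollary \ref{cor:tree_GP}), whence $Ks_r o$ is exactly that sphere and $\mu_G(Ks_rK)=N(r)$.

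The remaining task is the combinatorial count of $N(r)$. Starting from $o$, which has $d_0$ neighbours (all of degree $d_1$), and iterating the semi-homogeneous rule that each non-root vertex has one edge towards $o$ and the rest leading outwards, I obtain
\[
N(0)=1,\qquad N(2k)=\frac{d_0}{d_0-1}\delta^{k},\qquad N(2k+1)=d_0\,\delta^{k}\quad(k\ge 0),
\]
where $\delta=(d_0-1)(d_1-1)$. Substituting this into the sum gives the stated identity at once when $\kappa=2$, since only even distances occur and $\delta^{k}=\delta^{r/2}$. When $\kappa=1$, the tree must be homogeneous of degree $d_0=d_1=:d$ (any automorphism preserves vertex degree), so $\delta=(d-1)^2$ and $\delta^{r/2}=(d-1)^r$; a direct check then shows $N(r)=d(d-1)^{r-1}=\tfrac{d_0}{d_0-1}\delta^{r/2}$ for all $r\ge 1$, matching the formula again.

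There is no real obstacle: the only point requiring attention is the transitivity of $K$ on spheres (handled by lifting spheres to boundary points and invoking Proposition \ref{prp:semihomogeneous}) and the uniform rewriting of the two cases $\kappa=1,2$ using $\delta^{r/2}$, which works precisely because in the homogeneous case $\delta$ is a perfect square. Everything else is a straightforward bookkeeping exercise.
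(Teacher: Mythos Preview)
Your proposal is correct and follows essentially the same approach as the paper: both reduce the first two assertions to the transitivity of $K$ on spheres (via Proposition~\ref{prp:semihomogeneous} / Corollary~\ref{cor:tree_GP}), and both compute the integral by summing over the double cosets and identifying the measure of the $r$-th shell with the cardinality $|\partial B_r(o)|=\tfrac{d_0}{d_0-1}\delta^{r/2}$. The only cosmetic difference is that the paper passes through the quotient integral formula for $G/K\cong Go$, whereas you count left $K$-cosets in $Ks_rK$ directly; these are equivalent bookkeeping for the same count, and your explicit treatment of the $\kappa=1$ versus $\kappa=2$ cases (including the observation that $\kappa=1$ forces $d_0=d_1$) simply spells out what the paper leaves implicit.
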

\begin{proof}
	For the first statement, suppose that
 $f\in C_c(K\backslash G/K)$ is a $K$-bi-invariant function. Then $f$
factors as a right-$K$-invariant function through the map $G\to Go,\, s\mapsto so$.  
Since, moreover, $f$ is left-$K$-invariant, and
since $K$ acts transitively on $\partial B_r(o) = \lbrace x\in V(T)\mid d_c(o,x) = r\rbrace$
for all $r\in \kappa\N_0$, the function $f$, identified as a function on $Go$, is constant on $\partial B_r(o)$ for all $r\in \kappa \N_0$. Therefore, there is a unique function $\dot{f}\in C_c(\kappa\N_0)$
such that $f = \dot{f}\circ \vert \cdot \vert$. This completes the proof of the first statement.
The second statement is evident. Hence, it remains to prove the last
statement. Again, suppose that $f$ is a $K$-bi-invariant function on $G$ with compact support.
Then
\begin{align*}
	\int	f\,\mathrm{d}\mu_G = \sum_{x\in Go}f'(x) = \sum_{r\in \kappa \N_0} \vert \partial B_r(o)\vert \dot{f}(r) =\dot{f}(0) + \frac{d_0}{d_0-1} \sum_{r\in \kappa \N} \delta^{\frac{r}{2}}\, \dot{f}(r),
\end{align*}
where $f'\in C_c(Go)$ denotes the unique function satisfying $f(s)=f'(so)$
for all $s\in G$. Here, we used that the cardinality $\vert\partial B_r(o)\vert$ of $\partial B_r(o)$
for $r\in \kappa\N$ is equal to $\frac{d_0}{d_0-1}\delta^{\frac{r}{2}}$.
Furthermore, we used that there is a $G$-invariant  measure $\mu_{G/K}$ on $G/K$
such that
\begin{align*}
	\int f\,\mathrm{d}\mu_G = \int_{G/K}\int_K f(xk)\mathrm{d}\mu_G(k)\,\mathrm{d}\mu_{G/K}(x)
\end{align*}
for all $f\in C_c(G)$. By identifying $G/K$ with $Go$, it is easy to see that
\begin{align*}
	\int_{G/K} f(so)\mathrm{d}\mu_{G/K}([s]) = \sum_{x\in Go}f(x)
\end{align*}
for every function $f\in C_c(Go)$. 
\end{proof}
The functions in the following proposition play a key role in the analysis of the asymptotic behaviour of the spherical functions for $(G,K)$.

\begin{prp}\label{GA:prop:length_tree_asymp}
	Let $z\in \C$. The continuous function
	\begin{align*}
		h_z \colon G\to \C,\, s\mapsto \delta^{-\frac{1}{2}z\vert s\vert }
	\end{align*}
	is $K$-bi-invariant and positive definite whenever $z\in [0,\infty)$. Moreover,
	for $p\in [1,\infty)$, the function $h_z$ belongs to $L^p(G)$ if and only if $\mathrm{Re}\, z \in 
	\left(\frac{1}{p},\infty\right)$.
\end{prp}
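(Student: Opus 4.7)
The proposition has three separate assertions: $K$-bi-invariance and continuity of $h_z$; positive definiteness for $z \in [0,\infty)$; and an $L^p$-characterisation. The first is immediate from the analogous properties of the length function $|\cdot| \colon s \mapsto d_c(so,o)$; the third reduces to a direct application of the integration formula from Proposition \ref{GA:prop:spherical_algebra_auto_grp}; the positive definiteness carries the mathematical content and the plan is to deduce it from Schoenberg's theorem together with the classical conditional negative definiteness of tree metrics.

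For $K$-bi-invariance and continuity, the length $|\cdot|$ is $K$-bi-invariant because $K$ fixes $o$ and acts by isometries on $V(T)$. Since $K$ is compact open in $G$, the double cosets $KsK$ are open, so $|\cdot|$ is locally constant and hence continuous. As $h_z$ is a continuous function of $|\cdot|$, it inherits both properties.

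For the positive definiteness, I would first check that $|\cdot|$ is conditionally negative definite on $G$ and then invoke Schoenberg's theorem to conclude that $s \mapsto e^{-t|s|}$ is positive definite for every $t \geq 0$. The choice $t = \tfrac{1}{2}z \log \delta$ (which is nonnegative since $\delta \geq 2$ under the standing hypotheses $d_0,d_1 \geq 2$ and $d_0 + d_1 \geq 5$) then yields $h_z$. To obtain the conditional negative definiteness, I would first establish it for $d_c$ itself on $V(T) \times V(T)$: for each edge $e \in E(T)$, removing $e$ partitions $V(T)$ into two half-trees, and letting $\xi_e \colon V(T) \to \{0,1\}$ be the indicator function of one of them, one has
\[
d_c(x,y) = \sum_{e \in E(T)} (\xi_e(x) - \xi_e(y))^2,
\]
which, after expanding and restricting to coefficient sequences with $\sum_i c_i = 0$, collapses to $-2\sum_e (\sum_i c_i \xi_e(x_i))^2 \leq 0$. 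Pulling back along the orbit map $s \mapsto so$ transfers the bound to $|\cdot|$ on $G$.

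For the $L^p$-statement, the function $|h_z|^p = \delta^{-p\,\mathrm{Re}(z)|\cdot|/2}$ is $K$-bi-invariant and non-negative, and the associated function on $\kappa \N_0$ is $r \mapsto \delta^{-pr\,\mathrm{Re}(z)/2}$. Applying Proposition \ref{GA:prop:spherical_algebra_auto_grp} gives
\[
\int_G |h_z|^p\,\mathrm{d}\mu_G = 1 + \frac{d_0}{d_0 - 1}\sum_{r \in \kappa \N} \delta^{\frac{r}{2}(1 - p\,\mathrm{Re}(z))},
\]
a geometric series which converges if and only if $1 - p\,\mathrm{Re}(z) < 0$, equivalently $\mathrm{Re}(z) > 1/p$. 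The main potential obstacle is ensuring the cleanest possible justification of the conditional negative definiteness of $|\cdot|$; once that is in hand, Schoenberg's theorem and the geometric-series computation close out the proof without further difficulty.
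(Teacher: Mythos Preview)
Your proposal is correct and follows essentially the same approach as the paper: both deduce positive definiteness from Schoenberg's theorem applied to the conditionally negative definite length function $|\cdot|$, and both compute the $L^p$-norm via the integration formula of Proposition~\ref{GA:prop:spherical_algebra_auto_grp} as a geometric series with ratio $\delta^{\frac{\kappa}{2}(1-p\,\mathrm{Re}\,z)}$. The only difference is that you sketch the edge-cut argument for conditional negative definiteness of the tree metric, whereas the paper simply cites \cite[Example~C.2.2]{bekkaKazhdanProperty2008a}.
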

\begin{proof}
	We have already shown above that $\vert \cdot\vert$ is $K$-bi-invariant. Also, it is known that $\vert \cdot\vert$ is a conditionally negative definite function on $G$ (see e.g. \cite[Example C.2.2]{bekkaKazhdanProperty2008a}). It is therefore a direct consequence of Schoenberg's theorem (see e.g. \cite[Theorem C.3.2]{bekkaKazhdanProperty2008a}) that $h_z$ is positive definite if $z\in[0,\infty)$.
	
	Now assume that
	$z\in \C$ is an arbitrary complex number. Using Proposition \ref{GA:prop:spherical_algebra_auto_grp}, we obtain
	\begin{align*}
		\int \vert h_z(s)\vert^p \mathrm{d}\mu_G(s) -1 &= 
		\frac{d_0}{d_0-1}\sum_{r\in \kappa \N}\delta^{\frac{r}{2}} \,\vert \delta^{-\frac{1}{2}zr}\vert^p \\
		&=\frac{d_0}{d_0-1} \sum_{r\in \kappa \N}\delta^{\frac{r}{2}} \, \delta^{-\frac{1}{2}\mathrm{Re}(z)rp}\\
		&=
		\frac{d_0}{d_0-1}\sum_{r\in \kappa \N}\delta^{\frac{1}{2}(1-\mathrm{Re}(z)p)r} 
	\end{align*}
	for $p\in [1,\infty)$.
	The right-hand side of the equation converges if and only if
	$1-\mathrm{Re}(z) p  <0$.
	This implies that the above integral is finite if and only if
	$\mathrm{Re}\, z\in \left(\frac{1}{p},\infty\right)$.
\end{proof}
The number $\delta$ in the definition of $h_z$ may seem to have been chosen randomly. However, the reason for this choice is, as we will see below, that spherical functions for $(G,K)$ are linear combinations of elements in $\lbrace h_z\mid z\in \C \rbrace$.

We recall two results regarding the Gelfand pair $(G,K)$. The first can be found in \cite[p.~31]{amann}.
	\begin{prp}\label{EX:prop:generator_self_adjont}
		Let $s\in G$ be an element with $\vert s \vert = m \in \N_0$. The function
		\begin{align*}
			\mu_m = \frac{1}{\vert \partial B_{m} (o) \vert} \mathbf{1}_{KsK},
		\end{align*}
		 where
		$\partial B_{m}(o) =
		\lbrace x\in V(T) \mid d_c(x,o) = m\rbrace $,
		is a self-adjoint element of $C_c(K\backslash G /K)$.
	\end{prp}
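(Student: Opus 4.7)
The plan is to work directly with the convolution $*$-algebra $C_c(G)$, whose involution is $f^{*}(t) = \Delta(t^{-1})\overline{f(t^{-1})}$ for the modular function $\Delta$ of $G$. First I would check that $\mu_m$ genuinely lies in $C_c(K\backslash G/K)$: the $K$-bi-invariance is immediate from the definition as a scalar multiple of the indicator of a double coset, and $KsK$ is compact because the stabiliser $K = G_o$ acts transitively on the finite sphere $\partial B_m(o)$ by Proposition \ref{prp:semihomogeneous}, so $KsK$ is a finite union of left cosets of $K$ and hence compact (as well as open, since $K$ is open).

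For the self-adjointness itself, since $\mu_m$ is real-valued the conjugation in the involution disappears, and by Corollary \ref{cor:tree_GP} we have $KsK = \{t \in G : \vert t\vert = m\}$, which is inversion-invariant since $\vert t^{-1}\vert = \vert t\vert$. Thus $\mu_m(t^{-1}) = \mu_m(t)$ for every $t \in G$, and verifying $\mu_m^{*} = \mu_m$ reduces to showing $\Delta(t) = 1$ for $t \in KsK$.

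The main obstacle is this identity, which ultimately amounts to unimodularity of $G$. To handle it, I would use that $\Delta|_K \equiv 1$ since $K$ is compact, so $\Delta$ is constant on each double coset $KtK$ with value $\Delta(t)$. Combining the change-of-variable identity $\int f(t^{-1})\, d\mu_G(t) = \int f(t)\,\Delta(t)^{-1}\, d\mu_G(t)$ for the left Haar measure with the inversion-invariance of $KsK$ and applying it to $f = \mathbf{1}_{KsK}$ yields $\mu_G(KsK) = \Delta(s)^{-1}\mu_G(KsK)$, and hence $\Delta(s) = 1$. Since every element of $G$ lies in some double coset $KtK$ and all such double cosets are inversion-invariant by Corollary \ref{cor:tree_GP}, this actually gives $\Delta \equiv 1$ on $G$, completing the verification $\mu_m^{*} = \mu_m$.
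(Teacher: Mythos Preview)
Your argument is correct. The paper does not give its own proof of this proposition; it simply cites \cite[p.~31]{amann}, so there is nothing to compare your approach against beyond noting that your direct verification---using the inversion-invariance of $KsK$ from Corollary \ref{cor:tree_GP} together with the resulting unimodularity argument---is a clean and self-contained way to establish the claim.
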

	The second result we recall can be found in \cite[p.~32]{amann}.
	\begin{prp}
		 The set $\lbrace \mu_0,\, \mu_\kappa \rbrace$, where $\kappa$ denotes, as before, the number of $G$-orbits, generates
		the ${}^*$-algebra $C_c(K\backslash G/K)$.
		Moreover, the following identities hold:
		\begin{equation*}
			\mu_1\ast \mu_n=
			\begin{dcases}
				\mu_1 & \text{if } n=0\\
				\frac{1}{d_0}\mu_{n-1} + \frac{d_0-1}{d_0}\mu_{n+1} & \text{if } n\in \N
			\end{dcases}
		\end{equation*}
		if $\kappa = 1$, and
		\begin{equation*}
			\mu_2 \ast \mu_n =
			\begin{dcases}
				\mu_2 & \text{if } n=0\\
				\frac{1}{d_0(d_1-1)}\mu_{n-2} + \frac{d_1-2}{d_0(d_1-1)}\mu_{n}+\frac{d_0-1}{d_0} \mu_{n+2}& \text{if } n\in 2\N
			\end{dcases}
		\end{equation*}
		 if $\kappa = 2$.
	\end{prp}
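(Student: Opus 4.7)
The plan is to first establish the convolution identities, and then deduce from them that $\lbrace \mu_0, \mu_\kappa \rbrace$ generates $C_c(K\backslash G/K)$.

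\emph{Step 1 (reduction of convolution to counting).} For $m,n\in \kappa\N_0$ and $s\in G$ with $\vert s\vert = r \in \kappa\N_0$, I would first prove
\[
(\mu_m \ast \mu_n)(s) \;=\; \frac{\vert\lbrace y \in \partial B_m(o) : d_c(y, so) = n\rbrace\vert}{\vert\partial B_m(o)\vert \cdot \vert\partial B_n(o)\vert}.
\]
This follows directly by writing out $(\mu_m\ast \mu_n)(s) = \int \mu_m(t)\mu_n(t^{-1}s)\,\mathrm{d}\mu_G(t)$, observing that $\vert t^{-1}s\vert = d_c(to,so)$, and using two facts: the fibres of $t\mapsto to$ over $y\in Go$ are left cosets of $K$ of measure $\mu_G(K)=1$, and by Proposition~\ref{prp:semihomogeneous} the group $K$ acts transitively on each sphere $\partial B_m(o)$.

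\emph{Step 2 (case analysis for $m=\kappa$).} For the convolution identities I fix $\vert s\vert = r$ and partition $\partial B_\kappa(o)$ according to its position relative to the geodesic $[o,so]$. Let $z_i$ denote the $i$-th vertex on this geodesic (when $r\geq \kappa$). For $\kappa = 1$ the $d_0$ neighbours of $o$ split as: the unique vertex $z_1$ (with $d_c(z_1,so)=r-1$) and the $d_0-1$ other neighbours (each with $d_c(\cdot,so)=r+1$). For $\kappa = 2$ the set $\partial B_2(o)$ splits into three families: (i) the singleton $\lbrace z_2\rbrace$, at distance $r-2$ from $so$; (ii) the $d_1-2$ neighbours of $z_1$ distinct from $o$ and $z_2$, at distance $r$ from $so$; and (iii) the $(d_0-1)(d_1-1) = \delta$ remaining vertices of $\partial B_2(o)$, which are separated from $so$ by $o$ and hence lie at distance $r+2$. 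The degenerate case $r = 0$ is immediate and yields $\mu_\kappa \ast \mu_0 = \mu_\kappa$. Substituting these counts into the formula from Step 1 and simplifying using $\vert \partial B_n(o)\vert = \frac{d_0}{d_0-1}\delta^{n/2}$ for $n\in \kappa\N$ (from Proposition~\ref{GA:prop:spherical_algebra_auto_grp}) yields exactly the stated identities.

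\emph{Step 3 (generation).} By Proposition~\ref{GA:prop:spherical_algebra_auto_grp}, the functions $\lbrace \mu_n : n\in \kappa\N_0\rbrace$ form a vector-space basis of $C_c(K\backslash G/K)$, and each $\mu_n$ is self-adjoint by Proposition~\ref{EX:prop:generator_self_adjont}, so closure of the generated $*$-subalgebra $\mathcal{A}$ under involution is automatic. I induct on $n$: the base cases $n=0,\kappa$ are trivial. For the inductive step, the identity from Step~2 expresses $\mu_\kappa \ast \mu_n$ as a linear combination of $\mu_{n-\kappa}$ (present only when $\kappa=2$), $\mu_n$ and $\mu_{n+\kappa}$, and the coefficient $\tfrac{d_0-1}{d_0}$ of $\mu_{n+\kappa}$ is nonzero. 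Since $\mu_{n-\kappa},\mu_n \in \mathcal{A}$ by the inductive hypothesis and $\mu_\kappa \ast \mu_n\in \mathcal{A}$ by construction, we can solve for $\mu_{n+\kappa}$ to deduce $\mu_{n+\kappa}\in \mathcal{A}$.

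The argument is essentially bookkeeping; there is no conceptual obstacle. The one step requiring care is the three-branch decomposition in the $\kappa=2$ case and the verification that, after substituting the sphere sizes $\vert \partial B_n(o)\vert$, the resulting coefficients coincide with the ones in the proposition. The key geometric input used throughout is that in a tree $d_c(y,so)$ is controlled by where the geodesic $[o,y]$ first splits from $[o,so]$, which reduces everything to counting how many vertices lie in each branch.
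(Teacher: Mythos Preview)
Your proof is correct and complete. The paper itself does not give a proof of this proposition; it simply records the result as recalled from \cite[p.~32]{amann}. Your argument is the standard one and is essentially what one would find in that reference: reduce the convolution to a sphere-counting problem via the orbit map $t\mapsto to$, carry out the count by splitting $\partial B_\kappa(o)$ according to where the geodesic $[o,y]$ branches off from $[o,so]$, and then induct using the nonvanishing leading coefficient $\tfrac{d_0-1}{d_0}$ to obtain generation.

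One small slip: in Step~3 you write that $\mu_\kappa\ast\mu_n$ is a linear combination of ``$\mu_{n-\kappa}$ (present only when $\kappa=2$), $\mu_n$ and $\mu_{n+\kappa}$''. The parenthetical is attached to the wrong term: $\mu_{n-\kappa}$ appears in both cases, while it is the middle term $\mu_n$ that is present only when $\kappa=2$. This does not affect the argument, since all that matters for the induction is that the coefficient of $\mu_{n+\kappa}$ is nonzero and the remaining terms lie in $\mathcal{A}$ by hypothesis.
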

Thus, the investigation of $C^*$-completions of the algebra $C_c(K \backslash G / K)$ heavily relies on the investigation of the spectrum of $\mu_\kappa$. Although not essential for the following discussion, the following result might be helpful for a first understanding.

Recall that for a spherical Radon measure
	$\chi$ for $(G,K)$, there is a spherical function
	$\varphi$ for $(G,K)$ such that $\chi(f) = \int f(s)\varphi(s^{-1})\mathrm{d}\mu_G(s)$ for
	all $f\in C_c(K\backslash G/K)$.
	\begin{prp}
		The spectrum $ \sigma_{C^*(K\backslash G/K)}(\mu_\kappa)$ of $\mu_\kappa$ in $C^*(K\backslash G/K)$ is contained in the compact interval
		$[-1,1]$. Moreover, $1\in \sigma_{C^*(K\backslash G/K)}(\mu_\kappa)$.
	\end{prp}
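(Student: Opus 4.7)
The strategy splits into two parts: bounding $\|\mu_\kappa\|_{C^*(K\backslash G/K)}$ from above by $1$, and locating $1$ inside the spectrum via the trivial representation.

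For the upper bound, I would first compute $\|\mu_\kappa\|_{L^1(G)}=1$. Since $\mu_\kappa\geq 0$, we have $\|\mu_\kappa\|_{L^1}=\int \mu_\kappa\,\mathrm{d}\mu_G$; writing $\mu_\kappa=\dot\mu_\kappa\circ\vert\cdot\vert$ with $\dot\mu_\kappa(\kappa)=\vert\partial B_\kappa(o)\vert^{-1}$ and $\dot\mu_\kappa(r)=0$ otherwise, the integration formula of Proposition \ref{GA:prop:spherical_algebra_auto_grp} yields $\int \mu_\kappa\,\mathrm{d}\mu_G=\frac{d_0}{d_0-1}\,\delta^{\kappa/2}\cdot\vert\partial B_\kappa(o)\vert^{-1}=1$, since $\vert\partial B_\kappa(o)\vert=\frac{d_0}{d_0-1}\delta^{\kappa/2}$ (as observed in the proof of that proposition). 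Applying the standard inequality $\|\pi(f)\|\leq \|f\|_{L^1(G)}$ to every unitary representation $\pi$ of $G$ then gives $\|\mu_\kappa\|_{C^*(G)}\leq 1$, and hence $\|\mu_\kappa\|_{C^*(K\backslash G/K)}\leq 1$, since $C^*(K\backslash G/K)$ inherits the $C^*(G)$-norm by definition. Combined with the self-adjointness of $\mu_\kappa$ (Proposition \ref{EX:prop:generator_self_adjont}), this yields $\sigma_{C^*(K\backslash G/K)}(\mu_\kappa)\subset [-1,1]$.

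For the second part, I would exhibit a character of $C^*(K\backslash G/K)$. The trivial representation $1_G\colon G\to\mathcal{U}(\C)$ integrates to the ${}^*$-homomorphism $\epsilon\colon C^*(G)\to\C$ given on $C_c(G)$ by $f\mapsto \int f\,\mathrm{d}\mu_G$. Its restriction to the commutative sub-$C^*$-algebra $C^*(K\backslash G/K)$ is a ${}^*$-homomorphism to $\C$; it is nonzero because it sends $\mu_\kappa$ to the value $1$ computed above, and therefore defines a character of $C^*(K\backslash G/K)$. Since the value of any character of a commutative unital $C^*$-algebra on an element belongs to the spectrum of that element, we conclude $1=\epsilon(\mu_\kappa)\in \sigma_{C^*(K\backslash G/K)}(\mu_\kappa)$.

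I do not foresee any serious obstacle. The one step that requires a small amount of care is the Haar-measure computation $\|\mu_\kappa\|_{L^1}=1$, which hinges on the transitivity of $K$ on $\partial B_\kappa(o)$ provided by Proposition \ref{prp:semihomogeneous}; once this is in hand, both the upper bound on the norm (via the standard $L^1\to C^*$ comparison) and the character argument for $1\in\sigma(\mu_\kappa)$ are immediate.
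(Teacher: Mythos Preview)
Your argument is correct and close in spirit to the paper's, but the upper bound is obtained by a slightly different (and arguably more elementary) route. The paper argues pointwise on the spectrum: for each $\lambda\in\sigma_{C^*(K\backslash G/K)}(\mu_\kappa)$ it takes the corresponding character, represents it by a positive definite spherical function $\varphi$ (so $|\varphi|\leq 1$), and then estimates $|\lambda|=\bigl|\int\mu_\kappa\,\varphi^{\vee}\bigr|\leq \int\mu_\kappa=1$. You instead bound the $C^*$-norm of $\mu_\kappa$ globally by its $L^1$-norm and combine this with self-adjointness, avoiding any reference to the spherical-function machinery. Both approaches hinge on the same computation $\int\mu_\kappa\,\mathrm{d}\mu_G=1$, and both identify $1$ in the spectrum via the trivial representation; your version has the advantage of not presupposing the character/spherical-function correspondence, while the paper's version keeps the argument inside the Gelfand-pair framework used throughout.
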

	\begin{proof}
		Let $\lambda \in \sigma_{C^*(K\backslash G/K)}(\mu_\kappa)$. There is
		a character $\chi:C^*(K\backslash G/K)\to \C$ with $\chi(\mu_\kappa) =
		\lambda$. Pulling back $\chi$
		to $C_c(G)$ leads to a spherical Radon measure $\tilde{\chi}:C_c(G)\to \C$.
		Hence, there is a positive definite, spherical function $\varphi$ for
		$(G,K)$ with $\tilde{\chi}(f)=\int f(s) \varphi(s^{-1})\mathrm{d}\mu_G(s)$
		for $f\in C_c(K\backslash G/K)$. Since $\varphi$ is bounded by $1$, we
		have
		\begin{align*}
			\vert \lambda \vert \leq \int \vert \mu_\kappa (s)\varphi(s^{-1})\vert\,
			\mathrm{d}\mu_G(s) \leq \int \mu_\kappa(s)\,\mathrm{d}\mu_G(s)
			=\tau_0(\mu_\kappa) =1,
		\end{align*}
		where $\tau_0$ is the trivial group representation of $G$.
	\end{proof}
	\begin{prp}\label{EX:tree:prop_eigen_eq}
		A function $\varphi \in C(K\backslash G/K)$ with $\varphi(e) =1$ is spherical 
		for $(G,K)$ if and only if
		there exists a complex number $\gamma_\varphi\in \mathbb{C}$ such 
		that
		\begin{align*}
			\mu_\kappa \ast \varphi =  \gamma_\varphi \,\varphi.
		\end{align*}
		In addition, a spherical function $\varphi$ is uniquely determined by its
		eigenvalue $\gamma_\varphi$, and the eigenvalue $\gamma_\varphi$ is a real number if
		$\varphi$ is positive definite.
	\end{prp}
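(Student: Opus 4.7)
The plan is to reformulate the algebra-homomorphism condition of sphericity as an eigenvalue equation for convolution by $\mu_\kappa$. The basic observation underlying both implications is that
\[
\chi_\varphi(f) = \int f(s)\varphi(s^{-1})\,\mathrm{d}\mu_G(s) = (f\ast\varphi)(e)
\]
for all $f\in C_c(K\backslash G/K)$. A key preliminary computation, which I would establish first, is the identity $\chi_\varphi(\mu_n) = \varphi(t_n)$ for any $t_n\in G$ with $\vert t_n\vert = n$. This uses the $K$-bi-invariance of $\varphi$, the equality $\mu_G(Kt_nK) = \vert\partial B_n(o)\vert$ (which follows from the normalisation $\mu_G(K)=1$ together with the transitivity of $K$ on each sphere), and the fact that $\vert t_n^{-1}\vert = \vert t_n\vert$.

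For the forward direction, I would evaluate $(\mu_m\ast\mu_n\ast\varphi)(e)$ in two different ways. On the one hand, the character property of $\chi_\varphi$ gives $\chi_\varphi(\mu_m\ast\mu_n) = \varphi(t_m)\varphi(t_n)$. On the other hand, the convolution $\mu_n\ast\varphi$ is itself $K$-bi-invariant (since both factors are and the Haar measure is left-invariant), so applying the preliminary identity to this $K$-bi-invariant function yields $(\mu_m\ast\mu_n\ast\varphi)(e) = (\mu_n\ast\varphi)(t_m)$. Equating these expressions and letting $m$ and $t_m$ vary, I obtain $(\mu_n\ast\varphi)(t_m) = \varphi(t_n)\varphi(t_m)$ for every $t_m\in G$; since both sides are $K$-bi-invariant, this is equivalent to $\mu_n\ast\varphi = \varphi(t_n)\varphi$ on all of $G$. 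Specialising to $n=\kappa$ gives the eigenvalue equation with $\gamma_\varphi := \varphi(t_\kappa)$.

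For the reverse direction, starting from $\mu_\kappa\ast\varphi = \gamma_\varphi\varphi$, I would use the explicit recursion formulas for $\mu_\kappa\ast\mu_n$ from the preceding proposition to prove inductively on $n\in\kappa\N_0$ that $\mu_n\ast\varphi = c_n\varphi$ for a scalar $c_n$ that is a polynomial in $\gamma_\varphi$; evaluating at $e$ identifies $c_n$ with $\chi_\varphi(\mu_n)$. Since the $\mu_n$ span $C_c(K\backslash G/K)$ linearly, this extends to $f\ast\varphi = \chi_\varphi(f)\varphi$ for every $f\in C_c(K\backslash G/K)$. The character property $\chi_\varphi(f\ast g) = \chi_\varphi(f)\chi_\varphi(g)$ then falls out from associativity of convolution together with evaluation at $e$.

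The final two assertions follow quickly. For uniqueness, the inductive formulas above express $\varphi(t_n) = \chi_\varphi(\mu_n)$ as a polynomial in $\gamma_\varphi$, so $\gamma_\varphi$ determines $\varphi$ on every double coset $Kt_nK$, hence by $K$-bi-invariance on all of $G$. For realness, if $\varphi$ is positive definite then $\varphi(s^{-1}) = \overline{\varphi(s)}$, while $\vert t_\kappa^{-1}\vert = \vert t_\kappa\vert = \kappa$ together with the $K$-bi-invariance of $\varphi$ forces $\varphi(t_\kappa^{-1}) = \varphi(t_\kappa)$, so that $\gamma_\varphi = \varphi(t_\kappa)\in\R$. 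The main technical obstacle is the reassociation step in the forward direction: one has to verify carefully that $\mu_n\ast\varphi$ is itself $K$-bi-invariant so that the preliminary identity can be reapplied to identify the inner evaluation $(\mu_m\ast\mu_n\ast\varphi)(e)$ with $(\mu_n\ast\varphi)(t_m)$, which is the heart of the argument.
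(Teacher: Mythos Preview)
Your argument is correct, and in fact more self-contained than the paper's. The paper dispatches the forward implication by a direct citation to \cite[Theorem 8.2.6]{MR2328043}, which already gives that any spherical function satisfies $f\ast\varphi = \chi_\varphi(f)\varphi$ for all $f\in C_c(K\backslash G/K)$; specialising to $f=\mu_\kappa$ is then immediate. For the converse, the paper also works at the level of the generating set but phrases it algebraically rather than inductively: since $\{\mu_0,\mu_\kappa\}$ generates $C_c(K\backslash G/K)$ as a ${}^*$-algebra, every $f$ is a polynomial $\sum a_i\mu_\kappa^i$, and the eigenvalue equation gives $f\ast\varphi = (\sum a_i\gamma_\varphi^i)\varphi$ directly, which is again fed back into the cited theorem. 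Uniqueness is handled similarly: two spherical functions with the same eigenvalue give characters agreeing on the generator $\mu_\kappa$, hence on all of $C_c(K\backslash G/K)$.

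The trade-off is clear. Your approach unpacks the content of the cited theorem (the eigenfunction characterisation of spherical functions) in this specific setting, proving it by the reassociation trick $(\mu_m\ast\mu_n\ast\varphi)(e) = (\mu_n\ast\varphi)(t_m)$; this makes the argument independent of \cite{MR2328043} at the cost of some extra bookkeeping. The paper's version is terser and avoids the technical verification you flag (the $K$-bi-invariance of $\mu_n\ast\varphi$), but relies on the reader knowing or looking up the general eigenfunction criterion. Your uniqueness and realness arguments are essentially the same as the paper's; the paper simply observes that $\varphi$ is real-valued everywhere (via $s^{-1}\in KsK$ and positive definiteness) rather than checking it only at $t_\kappa$.
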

	\begin{proof}
		If $\varphi$ is spherical, the existence of a complex number $\gamma_\varphi$
		with $\mu_\kappa \ast \varphi = \gamma_\varphi \,\varphi$ follows from \cite[Theorem 8.2.6]{MR2328043}. The other direction follows from the
		fact that the set $\lbrace \mu_0, \mu_\kappa\rbrace$ generates $C_c(K\backslash G/K)$.
		Indeed, suppose that there is a complex number $\gamma_\varphi$ with
		$\mu_\kappa \ast \varphi = \gamma_\varphi \,\varphi$. Since
		$\lbrace \mu_0,\mu_\kappa\rbrace$ generates $C_c(K\backslash G/K)$, for every
		function $f\in C_c(K\backslash G/K)$, there are complex numbers $a_0,\dots, a_n \in \C$
		such that $f= \sum_{i=0}^n a_i (\mu_\kappa)^i$. Thus, we have
		\begin{align*}
			f\ast \varphi = \sum_{i=0}^n a_i (\mu_\kappa)^i \ast \varphi = \left( \sum_{i=0}^n a_i \gamma_\varphi^i \right) \varphi.
\end{align*}		
		This implies that $\varphi$ is a spherical function for $(G,K)$ (see e.g.
		\cite[Theorem 8.2.6]{MR2328043}).
		
		 The second statement is again a consequence of the fact that $\lbrace \mu_0,\mu_\kappa\rbrace$ generates
		$C_c(K\backslash G/K)$. Indeed, let $\varphi_1$ and $\varphi_2$
		be two spherical functions for $(G,K)$ with
		$\mu_\kappa \ast \varphi_i = \lambda \varphi_i$ for $i\in\lbrace 1, 2\rbrace$, where
		$\lambda\in \C$,  and let $\chi_i$ be the spherical 
		Radon measure
		corresponding to $ \varphi_i$. Then we have $\chi_1(\mu_\kappa) = 
		\mu_\kappa\ast \varphi_1(e) = \mu_\kappa \ast \varphi_2(e) 
		=\chi_2(\mu_\kappa)$.
		Linearity and multiplicativity now lead to the identity $\chi_1\vert_{C_c(K\backslash G/K)} = \chi_2\vert_{C_c(K\backslash G/K)}$.
		This implies that $\varphi_1 = \varphi_2$.
		Finally, assume that $\varphi$ is a positive definite, spherical function for $(G,K)$. Then
		we have $\overline{\varphi(s)}= \overline{\varphi(s^{-1})} =  \varphi(s)$ for all
		$s\in G$. Hence, $\varphi$ is a real-valued function. Therefore, 
		$\gamma_\varphi = \mu_\kappa \ast \varphi(e)$ is a real number.
	\end{proof}
	\begin{dfn}
		In the following, let $\varphi_\gamma$ denote the spherical
		function for $(G,K)$ satisfying
		\begin{align*}
			\mu_\kappa \ast \varphi_\gamma = \gamma \, \varphi_\gamma,
		\end{align*}
		where $\gamma \in \C$.
	\end{dfn}
	\begin{dfn}
		For a complex number $z \in \mathbb{C}$, we define
		\begin{equation*}
			\gamma_o(z) := 
			\begin{dcases}
				\frac{1}{d_0}\left((d_0-1)^z + (d_0-1)^{1-z}\right) & \mbox{if }  \kappa=1, \\
				\frac{1}{d_0(d_1-1)}\left( \delta^{z} + \delta^{1-z} + (d_1-2)\right) & \mbox{if }	\kappa=2.
			\end{dcases}
		\end{equation*}
	\end{dfn}
	In the following, we follow the arguments of \cite{MR0710827} to study the asymptotic behaviour of spherical functions for $(G,K)$.
	\begin{prp}\label{GA:prop:fund_sol_tree}
		Let $z\in \mathbb{C}$ be a complex number. The $K$-bi-invariant function
		$h_z \colon G\to \C$
		satisfies 
		\begin{align*}
			\mu_\kappa \ast h_z (s) = \gamma_o(z) h_z(s)
		\end{align*}
		for all $s$ with $\vert s\vert \neq 0$.
	\end{prp}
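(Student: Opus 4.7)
Since both $\mu_\kappa$ and $h_z$ are $K$-bi-invariant, so is $\mu_\kappa \ast h_z$; by Proposition \ref{GA:prop:spherical_algebra_auto_grp}, its values on $G$ are determined by the values on each double coset $Ks_rK$ for $r \in \kappa\N_0$. It therefore suffices, for each $r \in \kappa\N$, to pick a representative $s\in G$ with $|s|=r$ and verify that $\mu_\kappa\ast h_z(s) = \gamma_o(z)\delta^{-zr/2}$.

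Writing $x = so$, the plan is to reduce the convolution integral to a finite sum over the sphere $\partial B_\kappa(o)$. Using the decomposition $\int_G f\,\mathrm{d}\mu_G = \int_{G/K}\int_K f(tk)\mathrm{d}\mu_K(k)\mathrm{d}\mu_{G/K}([t])$ from Proposition \ref{GA:prop:spherical_algebra_auto_grp}, identifying $G/K$ with $Go$, and observing that the integrand $\mu_\kappa(t)h_z(t^{-1}s)$ is right-$K$-invariant in $t$ (because $\mu_\kappa$ is right-$K$-invariant and $h_z(k^{-1}t^{-1}s) = h_z(t^{-1}s)$ by left-$K$-invariance of $h_z$), I would obtain
\[
\mu_\kappa \ast h_z(s) \;=\; \frac{1}{|\partial B_\kappa(o)|} \sum_{y \in \partial B_\kappa(o)} \delta^{-\frac{z}{2}d_c(y,x)}.
\]

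The main content is then a combinatorial count of the vertices $y \in \partial B_\kappa(o)$ according to their distance from $x$, using the tree structure. For $\kappa=1$: among the $d_0$ neighbours of $o$, precisely one lies on the geodesic $[o,x]$ (at distance $r-1$ from $x$), while the remaining $d_0-1$ are at distance $r+1$ from $x$. For $\kappa=2$: letting $v_1 \in [o,x]$ be the neighbour of $o$ on the geodesic towards $x$, one gets one vertex of $\partial B_2(o)$ on $[o,x]$ at distance $r-2$ from $x$ (namely $v_2$); one gets $d_1-2$ further neighbours of $v_1$ in $\partial B_2(o)$, each at distance $r$ from $x$; and one gets $(d_0-1)(d_1-1)$ vertices arising from the remaining $d_0-1$ neighbours of $o$, each at distance $r+2$ from $x$. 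A quick consistency check gives $1 + (d_1-2) + (d_0-1)(d_1-1) = d_0(d_1-1) = |\partial B_2(o)|$, which matches the cardinality computed in Proposition \ref{GA:prop:spherical_algebra_auto_grp}.

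Substituting these counts and using $\delta = (d_0-1)(d_1-1)$ together with $\delta^{1/2}=d_0-1$ in the homogeneous case $\kappa=1$, each sum factors as $h_z(s) = \delta^{-zr/2}$ times precisely the bracketed expression in the definition of $\gamma_o(z)$, giving the claimed identity. I would not foresee any serious obstacle: the only subtlety is that the hypothesis $|s|\neq 0$ is essential because for $r=0$ the geodesic from $o$ to $x$ is trivial and the above case split collapses (all $y\in\partial B_\kappa(o)$ lie at distance exactly $\kappa$ from $x$, producing $\delta^{-z\kappa/2}$ instead of $\gamma_o(z)$). The combinatorial counts are of course consistent with the multiplication table for $\mu_\kappa\ast\mu_n$ recalled in the previous proposition, which provides a convenient sanity check.
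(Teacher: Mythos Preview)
Your argument is correct, but it proceeds differently from the paper's proof. The paper expands $h_z$ as a (uniformly-on-compacta convergent) series $\sum_{m\in\kappa\N_0}\phi_z(m)\mu_m$ with $\phi_z(m)=\delta^{-zm/2}|\partial B_m(o)|$, applies the recorded multiplication table for $\mu_\kappa\ast\mu_m$, reindexes, and then reads off the value at each $s$ with $|s|=m$ using the explicit formula $|\partial B_m(o)|=\tfrac{d_0}{d_0-1}\delta^{m/2}$. Your route is more geometric: you reduce the convolution to the finite average $\frac{1}{|\partial B_\kappa(o)|}\sum_{y\in\partial B_\kappa(o)}\delta^{-\frac{z}{2}d_c(y,x)}$ and obtain the result from a direct count of how many $y$ lie at each possible distance from $x=so$. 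This avoids the series expansion and limiting argument entirely; in effect you are re-deriving on the spot precisely the piece of the $\mu_\kappa\ast\mu_n$ table that the paper quotes, which is why your counts match it as you observe. The paper's approach has the advantage of making the link to the algebra structure of $C_c(K\backslash G/K)$ explicit, while yours is shorter and self-contained once the identification $G/K\cong Go$ is in hand.
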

	\begin{proof}
		There are two cases to consider: the case that $G$ acts transitively on $V(T)$ and
		the case that $G$ does not act transitively on $V(T)$. We present the computation for the latter case, so let us assume that $G$ does not act transitively on $V(T)$.
		
		Let $\phi_z\colon 2\N_0\to \R$ be the function given by
		$\phi_z(m) = \delta^{-\frac{1}{2}z m} \vert \partial B_m(o) \vert
		$ for $m\in 2\N_0$. 
		Then the sequence $(h_z^{(n)})_{n\in \N_0} = \left(\sum_{k= 0}^n \phi_z(2k)\mu_{2k}\right)_{n\in \N_0}$ converges to $h_z$ uniformly on compact subsets of $G$, which, in turn, implies the uniform convergence of $(\mu_2\ast h_z^{(n)})_{n\in \N_0}$ to $\mu_2\ast h_z$ on compact subsets of $G$.	We have
		\begin{align*}
			\mu_2 \ast h_z =&
			\sum_{m\in 2\N_0}  \phi_z(m) \mu_2\ast \mu_m\\
			=& \sum_{m\in 2\N}
			\phi_z(m) 
			\left(\frac{1}{d_0(d_1-1)}\mu_{m-2} + \frac{d_1-2}{d_0(d_1-1)}\mu_{m}+\frac{d_0-1}{d_0} \mu_{m+2} \right) \\ &+ \phi_z(0)\mu_2
			\\
			= &\sum_{m\in 2\N_0}\frac{1}{d_0(d_1-1)}\phi_z(m+2)\mu_m
			+ \sum_{m\in 2\N}\frac{d_1-2}{d_0(d_1-1)}\phi_z(m) \mu_m
			\\&+  \sum_{m \in \lbrace 4,6,\ldots\rbrace}\frac{d_0-1}{d_0}\phi_z(m-2) \mu_m
			+\phi_z(0)\mu_2.
		\end{align*}
		Since the evaluation at any point in $G$ is continuous with respect to the
		uniform convergence on compact subsets of $G$, the assertion follows easily.
		Indeed, recall that for $m\in 2\N$ we have $\vert\partial B_m(o)
		\vert = \frac{d_0}{d_0-1}\,\delta^{\frac{m}{2}}$.  Hence, we have
		\begin{align*}
			\phi_z(m) = \frac{d_0}{d_0-1}\delta^{\frac{1}{2}(1-z)m}
		\end{align*}
		for $m\in 2\N$. Thus, for $s\in G$, we obtain
		\begin{align*}
			&\vert \partial B_2(o)\vert \mu_2\ast h_z(s)\\ 
			&= 
				\frac{1}{d_0(d_1-1)}\frac{d_0}{d_0-1}\delta^{\frac{1}{2}(1-z)(2+2)}
				+ \frac{d_1-2}{d_0(d_1-1)}\frac{d_0}{d_0-1}\delta^{\frac{1}{2}(1-z)2}+1\\
			&=
			\frac{1}{d_0(d_1-1)}\left( \delta^{1-z}+d_1-2 
			+ (d_1-1)(d_0-1)\delta^{z-1}\right)
			\frac{d_0}{d_0-1}\delta^{\frac{1}{2}(1-z)2}\\
			&= 
			\vert \partial B_2(o)\vert \gamma_o(z) \delta^{-\frac{1}{2}z2} = 
			\vert \partial B_2(o)\vert\gamma_o(z) h_z(s)
		\end{align*}
		if $m=\vert s\vert =2$, and
	\begin{align*}
		&\vert \partial B_m(o)\vert \mu_2\ast h_z(s)\\
		&=\frac{1}{d_0(d_1-1)}\frac{d_0}{d_0-1}\delta^{\frac{1}{2}(1-z)(m+2)}+
			\frac{d_1-2}{d_0(d_1-1)}\frac{d_0}{d_0-1}\delta^{\frac{1}{2}(1-z)m}+
			\delta^{\frac{1}{2}(1-z)(m-2)}\\
		&= \frac{1}{d_0(d_1-1)}\left( \delta^{1-z}+ d_1-2+ (d_1-1)(d_0-1)\delta^{z-1}\right)\frac{d_0}{d_0-1}\delta^{\frac{1}{2}(1-z)m}\\
		&= \vert \partial B_m(o)\vert \gamma_o(z) h_z(s)
	\end{align*}
	if $m=\vert s\vert \geq 4$.
	\end{proof}
	
	\begin{lem}[{cf. \cite[Theorem 2.2]{MR0710827}}]\label{EX:lemma_representation_of_sph_func}
		Let $z\in \mathbb{C}$ be a complex number with $\delta^{-\frac{(1-z)}{2}\kappa}\neq 
		\delta^{-\frac{z}{2}\kappa}$. Then there are constants $c(z)\neq 0$ depending on $z$ such that
		\begin{align*}
			\varphi_{\gamma_o(z)} = c(z) h_z + c(1-z) h_{1-z}.
		\end{align*}
	\end{lem}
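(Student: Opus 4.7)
The plan is to realise $\varphi_{\gamma_o(z)}$ as the unique $K$-bi-invariant, normalized linear combination of $h_z$ and $h_{1-z}$ that solves $\mu_\kappa\ast\varphi=\gamma_o(z)\varphi$ globally; by Proposition \ref{EX:tree:prop_eigen_eq}, any such function automatically coincides with $\varphi_{\gamma_o(z)}$.

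First I would record the symmetry $\gamma_o(1-z)=\gamma_o(z)$, which is immediate from the explicit formula. Together with Proposition \ref{GA:prop:fund_sol_tree}, this implies that both $h_z$ and $h_{1-z}$ satisfy $(\mu_\kappa\ast h)(s)=\gamma_o(z)h(s)$ at every $s\in G$ with $\vert s\vert\neq 0$. Consequently, for any $c_1,c_2\in\C$, the combination $\psi:=c_1 h_z+c_2 h_{1-z}$ is $K$-bi-invariant and obeys the eigenvalue equation away from the identity. Since both $\psi$ and $\mu_\kappa\ast\psi$ are $K$-bi-invariant, the eigenvalue equation extends to all of $G$ as soon as it is checked at the single point $e$. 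Together with the normalization $\psi(e)=1$, this gives the two scalar conditions
\[
c_1+c_2=1,\qquad c_1\delta^{-z\kappa/2}+c_2\delta^{-(1-z)\kappa/2}=\gamma_o(z).
\]
For the second identity, I would compute $(\mu_\kappa\ast h_z)(e)$ via Proposition \ref{EX:prop:generator_self_adjont}: writing $\mu_\kappa=\vert\partial B_\kappa(o)\vert^{-1}\mathbf{1}_{Ks_0K}$ for any $s_0\in G$ with $\vert s_0\vert=\kappa$ and noting that $\mu_G(Ks_0K)=\vert\partial B_\kappa(o)\vert$ (decompose $Ks_0K$ into right $K$-cosets of measure $\mu_G(K)=1$, indexed via the orbit map by $Ks_0o=\partial B_\kappa(o)$), the $K$-bi-invariance of $h_z$ together with $h_z(t^{-1})=h_z(t)$ reduces the integral to $h_z(s_0)=\delta^{-z\kappa/2}$, and symmetrically for $h_{1-z}$.

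The resulting $2\times 2$ system has determinant $\delta^{-z\kappa/2}-\delta^{-(1-z)\kappa/2}$, which is nonzero precisely by the lemma's hypothesis; solving yields
\[
c_1=\frac{\gamma_o(z)-\delta^{-(1-z)\kappa/2}}{\delta^{-z\kappa/2}-\delta^{-(1-z)\kappa/2}},
\]
and the symmetry of the system under $z\leftrightarrow 1-z$ (made possible by $\gamma_o(z)=\gamma_o(1-z)$) forces $c_2$ to be the value of the same expression with $z$ replaced by $1-z$. Setting $c(z):=c_1$, Proposition \ref{EX:tree:prop_eigen_eq} then identifies $\psi=c(z)h_z+c(1-z)h_{1-z}$ with $\varphi_{\gamma_o(z)}$. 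The asserted non-vanishing $c(z)\neq 0$ reduces, via the displayed formula, to the inequality $\gamma_o(z)\neq\delta^{-(1-z)\kappa/2}$, which becomes a short algebraic verification once one substitutes the explicit definition of $\gamma_o(z)$ for the relevant value of $\kappa$.

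The only step that requires genuine attention is the evaluation of $(\mu_\kappa\ast h_z)(e)$, since it mixes the combinatorial normalisation of $\mu_\kappa$ with the spherical geometry of $T$ through the identity $\mu_G(Ks_0K)=\vert\partial B_\kappa(o)\vert$; the remainder is linear algebra combined with Propositions \ref{GA:prop:fund_sol_tree} and \ref{EX:tree:prop_eigen_eq}.
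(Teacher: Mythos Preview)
Your argument is essentially the paper's: set up the $2\times 2$ linear system from the normalisation $\psi(e)=1$ and the eigenvalue equation at $e$, use Proposition~\ref{GA:prop:fund_sol_tree} to get the eigenvalue equation for $|s|\neq 0$, and then invoke Proposition~\ref{EX:tree:prop_eigen_eq}. The paper writes the same matrix (specialised to $\kappa=2$), observes the $z\leftrightarrow 1-z$ symmetry to identify $c_2=c(1-z)$, and checks the eigenvalue equation only for $|s|\neq 0$; you are more explicit in computing $(\mu_\kappa\ast h_z)(e)=\delta^{-z\kappa/2}$ via $\mu_G(Ks_0K)=|\partial B_\kappa(o)|$, which is the step the paper leaves implicit.

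One caution on the non-vanishing: the paper's proof does not verify $c(z)\neq 0$ at all, and your promised ``short algebraic verification'' of $\gamma_o(z)\neq \delta^{-(1-z)\kappa/2}$ will not succeed under the lemma's hypothesis alone. For instance, at $z=1$ the hypothesis $\delta^{-(1-z)\kappa/2}\neq\delta^{-z\kappa/2}$ holds, yet $\gamma_o(1)=1=\delta^{0}$ and hence $c(1)=0$ (consistently, $\varphi_{\gamma_o(1)}=h_0$). In fact $c(z)=0$ precisely when $\delta^{(1-z)\kappa/2}=1$, so the zeros lie on the line $\mathrm{Re}\,z=1$. This does not affect the downstream use in Lemma~\ref{EX:lemma:spher_func_asympt_tree}, where the relevant coefficient is nonzero on the parameter ranges actually used, but you should not claim the inequality holds for all $z$ satisfying the lemma's hypothesis.
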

	\begin{proof}
		We again assume that $G$ does not act transitively on $T$. The other case is analogous.
		 Since $\delta^{-(1-z)}\neq 
		\delta^{-z}$,
		the matrix
		\begin{equation*}
			\begin{pmatrix}
			1 & 1 \\ 
			\delta^{-z} & \delta^{-(1-z)}
			\end{pmatrix} 
		\end{equation*}
		is invertible. Since $\gamma_o(z) = \gamma_o(1-z)$, the solution $(x(z),y(z))$ of the equation
		\begin{equation}\label{EX:eq:spherical_func_as_length_func}
			\begin{pmatrix}
			1 & 1 \\ 
			\delta^{-z} & \delta^{-(1-z)}
			\end{pmatrix} 
			\begin{pmatrix}
				x(z) \\ y(z)
			\end{pmatrix}
			= \begin{pmatrix}
				1 \\ \gamma_o(z)
			\end{pmatrix},
		\end{equation}
		must be of the form $x(z) = y(1-z)$, as an exchange of $z$ and $1-z$ in
		\eqref{EX:eq:spherical_func_as_length_func} shows.
		So let $c(z) = x(z)$. Then $c(z) h_z(s) + c(1-z)h_{1-z}(s) = 1$ and
		$c(z) h_z(t) + c(1-z) h_{1-z}(t) = \gamma_o(t)$ for every $s,t\in G$
		with $\vert s \vert = 0$ and $\vert t\vert = 2$. Let
		$\varphi'_z = c(z) h_z + c(1-z) h_{1-z}$. In order to prove that
		$\varphi'_z= \varphi_{\gamma_o(z)}$, it is sufficient, by Proposition
		\ref{EX:tree:prop_eigen_eq}, to show that $\mu_2 \ast \varphi'_z
		= \gamma_o(z) \varphi'_z$. Let $s\in G$ be an element with $\vert s\vert
		\neq 0$. Then 
		\begin{align*}
			\mu_2\ast \varphi'_z(s) &= c(z) \mu_2 \ast h_z(s) + c(1-z) \mu_2\ast h_{1-z}(s)\\
			 &=	c(z) \gamma_o(z) h_z(s)+ c(1-z) \gamma_o(z) h_{1-z}(s)\\
			 &= \gamma_o(z) \varphi'_z(s),
		\end{align*}
		which completes the proof.
	\end{proof}
	\begin{lem}\label{EX:lemma:spher_func_asympt_tree}\label{GA:lem:asym_of_sphe_func_tree}
		Let $z\in \C$ be a complex number with  $\delta^{-\frac{(1-z)}{2}\kappa}\neq 
		\delta^{-\frac{z}{2}\kappa}$, let
		 $p\in (2,\infty)$, and let $q\in (1,2)$ be such that $\frac{1}{p}+\frac{1}{q}=1$. Then $\varphi_{\gamma_o(z)}$ belongs to $L^p(G)$
		if and only if $\mathrm{Re}\, z \in \left(\frac{1}{p},\frac{1}{q}\right)$.
	\end{lem}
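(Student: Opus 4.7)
The plan is to reduce everything to the asymptotic behaviour of the elementary length functions $h_z$ from Proposition \ref{GA:prop:length_tree_asymp}. Lemma \ref{EX:lemma_representation_of_sph_func} allows me to write $\varphi_{\gamma_o(z)} = c(z) h_z + c(1-z) h_{1-z}$ with both coefficients nonzero. Since $\gamma_o$ is invariant under $z \mapsto 1-z$, one has $\varphi_{\gamma_o(z)} = \varphi_{\gamma_o(1-z)}$, so I may assume $\mathrm{Re}\, z \leq 1/2$ without loss of generality. Under this normalisation, $\mathrm{Re}\, z < 1/q$ holds automatically, because $p>2$ forces $1/q > 1/2$. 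The task thus reduces to proving the equivalence $\varphi_{\gamma_o(z)}\in L^p(G)\Leftrightarrow \mathrm{Re}\, z > 1/p$ in the range $\mathrm{Re}\, z \leq 1/2$.

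Sufficiency is immediate: if $\mathrm{Re}\, z > 1/p$ and $\mathrm{Re}\, z \leq 1/2$, then also $\mathrm{Re}(1-z) \geq 1/2 > 1/p$, so by Proposition \ref{GA:prop:length_tree_asymp} both $h_z$ and $h_{1-z}$ lie in $L^p(G)$, and hence so does $\varphi_{\gamma_o(z)}$. For necessity, the case $\mathrm{Re}\, z = 1/2$ is trivial (since $1/2 > 1/p$ is automatic), so I focus on $\mathrm{Re}\, z < 1/2$. Using the integration formula underlying Proposition \ref{GA:prop:spherical_algebra_auto_grp}, I would unfold the integral as
\[
\int_G |\varphi_{\gamma_o(z)}|^p\, \mathrm{d}\mu_G = |c(z)+c(1-z)|^p + \frac{d_0}{d_0-1}\sum_{r \in \kappa\N}\delta^{r/2}\bigl|c(z)\delta^{-zr/2} + c(1-z)\delta^{-(1-z)r/2}\bigr|^p.
\]
Since $\mathrm{Re}\, z < 1/2$ gives $|\delta^{-z/2}| > |\delta^{-(1-z)/2}|$, the reverse triangle inequality produces a constant $C>0$ and an $r_0\in\N$ with $|c(z)\delta^{-zr/2} + c(1-z)\delta^{-(1-z)r/2}| \geq C\,\delta^{-\mathrm{Re}(z) r/2}$ for all $r\in\kappa\N$ with $r\geq r_0$. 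Plugging this in yields a positive multiple of $\sum_{r\geq r_0}\delta^{(1-p\,\mathrm{Re}(z))r/2}$ as a lower bound for the series, and this geometric series converges precisely when $\mathrm{Re}\, z > 1/p$.

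The main obstacle to watch out for is that possible cancellations between the two exponential terms could in principle undermine the dominance estimate in the necessity argument. However, the inequality $|\delta^{-z/2}| > |\delta^{-(1-z)/2}|$ holds strictly whenever $\mathrm{Re}\, z < 1/2$, so these cancellations are negligible for large $r$ and the reverse triangle inequality bites cleanly. The borderline case $\mathrm{Re}\, z = 1/2$, where the two moduli coincide and where the standing assumption $\delta^{-(1-z)\kappa/2}\neq \delta^{-z\kappa/2}$ of the lemma is genuinely needed to invoke Lemma \ref{EX:lemma_representation_of_sph_func}, lies automatically inside the target interval $(1/p, 1/q)$, so it is already covered by the sufficiency argument.
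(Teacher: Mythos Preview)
Your proof is correct and follows the same strategy as the paper: decompose $\varphi_{\gamma_o(z)}$ via Lemma~\ref{EX:lemma_representation_of_sph_func} and invoke the $L^p$-characterisation of $h_z$ from Proposition~\ref{GA:prop:length_tree_asymp}. The only difference is in the necessity direction, where the paper argues more economically: for $\mathrm{Re}\,z\in[0,1/p]$ one has $h_{1-z}\in L^p(G)$ but $h_z\notin L^p(G)$, so since $c(z)\neq 0$ the combination $c(z)h_z+c(1-z)h_{1-z}$ cannot lie in $L^p(G)$ --- this replaces your explicit reverse-triangle-inequality estimate on the tail series.
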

	\begin{proof}
		Using Lemma \ref{EX:lemma_representation_of_sph_func}, it is easy to see
		that $\varphi_{\gamma_o(z)}$ is unbounded whenever $\mathrm{Re}\, z \not\in [0,1]$.
		Now, suppose that
		$\mathrm{Re}\, z \in \left(\frac{1}{p},\frac{1}{q}\right)$. 
		Note that $1-\mathrm{Re}\, z >1-\frac{1}{q} =\frac{1}{p}$. Hence, by Proposition \ref{GA:prop:length_tree_asymp}, the functions
		$h_z$ and $h_{1-z}$ belong to $L^p(G)$. Since, due to Lemma \ref{EX:lemma_representation_of_sph_func}, $\varphi_{\gamma_o(z)}$
		is a linear combination of $h_z$ and $h_{1-z}$, $\varphi_{\gamma_o(z)}$
		belongs to $L^p(G)$.
		
		Suppose that $\mathrm{Re}\, z\in [0,\frac{1}{p}]$. Then
		we have $1-\mathrm{Re}\, z \geq 1-\frac{1}{p} =\frac{1}{q} > \frac{1}{p}$.
		By Proposition \ref{GA:prop:length_tree_asymp}, $h_z$
		does not belong to $L^p(G)$ while $h_{1-z}$ belongs to $L^p(G)$.
		This implies that $\varphi_z\not \in L^p(G)$. The remaining case follows from a similar argument.
	\end{proof}	
	In the analysis of the asymptotic behaviour of spherical functions for
	$(G,K)$, we have already found some necessary conditions for positive
	definiteness of these functions. We now recall Amann's classification of positive definite, spherical functions for automorphism groups with independence property.
	
	First, we recall another description of the spherical functions for $(G,K)$.
	For this purpose, let $\omega\in \partial T$ be
	a boundary point, and let $G_\omega = P$ be the stabilizer group of $\omega$.
	Since $G$ acts transitively on $\partial T$, and since $\partial T$ is compact,
	the continuous map $G\to \partial T,\, s\mapsto s\omega$ induces
	a $G$-equivariant homeomorphism $G/P\to \partial T$.
	Therefore, there is a unique $K$-invariant and
	quasi-$G$-invariant Radon probability measure $\nu_o$
	on $\partial T$, and we have
	\begin{align*}
		\int_K f(k\omega)\mathrm{d}\mu_G(k) = \int_{\partial T} f(\omega')\mathrm{d}\nu_o(\omega')
	\end{align*}
	for all $f\in C(\partial T)$.
	
	Note that for every $s\in G$, the probability measure
	$s(\nu_o)= \nu_{so}$ is the $sKs^{-1}=  G_{so}$-invariant Radon probability measure. The function
	\begin{align*}
		P_o \colon G\times \partial T \to (0,\infty),\, (s,\omega')\mapsto
		\frac{\mathrm{d}\nu_{so}}{\mathrm{d}\nu_o}(\omega')
	\end{align*}
	is called the Poisson kernel, which is explicitly given by
	\begin{align*}
		P_o(s,\omega') = \delta^{\frac{1}{2}\langle so,o; \omega'\rangle},
	\end{align*}
	where $\langle so,o;\omega'\rangle = d_c(o,u)-d(so,u)$ for any
	$u\in [o,\omega')\cap[so,\omega')$ (see \cite[Proposition 1.8.4]{choucrounAnalyseHarmoniqueGroupes1994}).
	\begin{dfn}
		For $z\in \C$, we define $\pi_z\colon G\to \mathcal{B}(L^2(\partial T,\nu_o))$ to be the group representation
		of $G$ 
		given by  
		\begin{align*}
			\pi_z(s)f(\omega) = P_o^z(s,\omega)f(s^{-1}\omega)
		\end{align*}
		for $s\in G$, $f\in L^2(\partial T,\nu_o)$ and $\omega \in \partial T$.
	\end{dfn}
	A proof of the following result can be found in \cite[Lemma 42]{amann}.
	\begin{thm} \label{EX:thm_spherical_func_on_bdy_tree}
		Let $G$ be a non-compact, closed subgroup of $\mathrm{Aut}(T)$ that acts transitively on $\partial T$.
		Then for each $z\in \C$, the function
		\begin{align*}
			 G \to \C,\, s\mapsto \int P_o^z(s,\omega)\,\mathrm{d}\nu_o(\omega)
			= \langle \pi_z(s) \mathbf{1}_{\partial T},\mathbf{1}_{\partial T}\rangle
		\end{align*}
		is a spherical function for $(G,K)$, and every spherical function for $(G,K)$ is of this form. More precisely,
		\begin{align*}
			\varphi_{\gamma_o(z)}(s) = \int P_o^z(s,\omega)\mathrm{d}\nu_o(\omega)
		\end{align*}
		 for all $s\in G$.
	\end{thm}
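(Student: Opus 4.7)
The plan is to show that $\phi_z(s):=\int P_o^z(s,\omega)\,\mathrm{d}\nu_o(\omega) = \langle \pi_z(s)\mathbf{1}_{\partial T}, \mathbf{1}_{\partial T}\rangle$ coincides with the spherical function $\varphi_{\gamma_o(z)}$ for every $z\in\C$, and that the map $z\mapsto\gamma_o(z)$ is surjective onto $\C$. Combined with Proposition \ref{EX:tree:prop_eigen_eq}, this yields both assertions of the theorem.

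First I would verify that $\phi_z$ is $K$-bi-invariant with $\phi_z(e)=1$. Normalisation is immediate from $\nu_o(\partial T)=1$; right-$K$-invariance is clear from $sk\cdot o = s\cdot o$, which gives $P_o(sk,\omega) = P_o(s,\omega)$; left-$K$-invariance uses the cocycle identity $P_o(g_1 g_2,\omega) = P_o(g_1,\omega)P_o(g_2,g_1^{-1}\omega)$ (a chain-rule consequence of $g_*\nu_o = \nu_{go}$) together with $P_o(k,\cdot)\equiv 1$ for $k\in K$ and the $K$-invariance of $\nu_o$ via the substitution $\omega\mapsto k\omega$. The core step is then the eigenvalue equation $\mu_\kappa*\phi_z = \phi_z(s_\kappa)\phi_z$ for any $s_\kappa\in G$ with $|s_\kappa|=\kappa$. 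By Proposition \ref{prp:semihomogeneous}, $K = G_o$ acts transitively on $\partial T$, so the subspace of $K$-invariants in $L^2(\partial T,\nu_o)$ is spanned by $\mathbf{1}_{\partial T}$. Letting $E = \int_K \pi_z(k)\,\mathrm{d}\mu_K(k)$ be the $K$-averaging idempotent on $L^2(\partial T,\nu_o)$, the two-sided $K$-invariance of $\mu_\kappa$ yields $\pi_z(\mu_\kappa)E = \pi_z(\mu_\kappa) = E\pi_z(\mu_\kappa)$ as operators, while uniqueness of the $K$-fixed vector gives $E\pi_z(s)\mathbf{1}_{\partial T} = \phi_z(s)\mathbf{1}_{\partial T}$ and $\pi_z(\mu_\kappa)\mathbf{1}_{\partial T} = c\mathbf{1}_{\partial T}$ for some $c\in\C$. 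Chaining these identities and pairing with $\mathbf{1}_{\partial T}$ yields $(\mu_\kappa*\phi_z)(s) = c\phi_z(s)$, with $c = \phi_z(s_\kappa)$ by direct evaluation (using that $\mu_\kappa$ is the normalised indicator of $Ks_\kappa K$ from Proposition \ref{EX:prop:generator_self_adjont}).

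It remains to identify $\phi_z(s_\kappa)$ with $\gamma_o(z)$ and to prove the converse. For the identification, I would decompose $\partial T$ along the geodesic $o = v_0, v_1, \ldots, v_\kappa = s_\kappa o$ into shells $A_j := \Omega_o(v_j)\setminus\Omega_o(v_{j+1})$ for $j < \kappa$ and $A_\kappa := \Omega_o(v_\kappa)$. Using $\nu_o(\Omega_o(v)) = 1/|\partial B_j(o)|$ for $v\in\partial B_j(o)$ (again from $K$-transitivity on spheres), together with the cardinalities $|\partial B_1(o)| = d_0$ and $|\partial B_2(o)| = d_0(d_1-1)$ and the case analysis $\langle s_\kappa o, o; \omega\rangle = 2j - \kappa$ on $A_j$ (respectively $\kappa$ on $A_\kappa$) deduced from $[o,\omega)\cap[o,s_\kappa o]$, a direct calculation via $P_o(s,\omega) = \delta^{\langle so,o;\omega\rangle/2}$ produces $\phi_z(s_\kappa) = \gamma_o(z)$ separately in the cases $\kappa\in\{1,2\}$. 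Hence $\phi_z = \varphi_{\gamma_o(z)}$ by Proposition \ref{EX:tree:prop_eigen_eq}. The converse direction reduces by the same proposition to showing surjectivity of $\gamma_o\colon\C\to\C$; setting $u = (d_0-1)^z$ if $\kappa=1$ or $u = \delta^z$ if $\kappa=2$, the equation $\gamma_o(z) = \gamma$ becomes a monic quadratic in $u$ with nonzero constant term, hence always admits a nonzero complex root. The main delicacy is operator-theoretic: because $\pi_z$ fails to be unitary when $\mathrm{Re}\,z\neq 1/2$, one must justify that $E$, $\pi_z(\mu_\kappa)$ and their various products are genuine bounded operators on $L^2(\partial T,\nu_o)$ and that the absorption identity $\pi_z(\mu_\kappa)E = \pi_z(\mu_\kappa)$ holds operator-wise, which ultimately follows from the boundedness of $P_o(s,\cdot)$ on the compact space $\partial T$ for each fixed $s\in G$.
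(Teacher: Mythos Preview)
Your proposal is correct and complete in outline. Note, however, that the paper does not supply its own proof of this theorem: it simply records the statement and refers to \cite[Lemma~42]{amann}. So there is no ``paper's proof'' to compare against; you have written out what the cited reference establishes.

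Your route is the standard one and works as written. Two small points are worth making explicit when you flesh it out. First, to pass from $(\mu_\kappa*\phi_z)(s)=\int\mu_\kappa(t)\,\phi_z(t^{-1}s)\,\mathrm{d}\mu_G(t)$ to $\langle\pi_z(\mu_\kappa)\pi_z(s)\mathbf{1}_{\partial T},\mathbf{1}_{\partial T}\rangle$ you use either the symmetry $\mu_\kappa(t^{-1})=\mu_\kappa(t)$ from Proposition~\ref{EX:prop:generator_self_adjont} together with unimodularity of $G$, or equivalently the fact that $(G,K)$ being a Gelfand pair forces $G$ to be unimodular; this step is not automatic for a non-unitary $\pi_z$ and should be stated. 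Second, your operator $E=\int_K\pi_z(k)\,\mathrm{d}\mu_K(k)$ is genuinely the \emph{orthogonal} projection onto $\C\mathbf{1}_{\partial T}$ precisely because $\pi_z\vert_K$ is unitary (since $P_o(k,\cdot)\equiv 1$ for $k\in K$); this is what makes the identity $E\pi_z(s)\mathbf{1}_{\partial T}=\phi_z(s)\mathbf{1}_{\partial T}$ clean. With these two remarks in place, the chain
\[
(\mu_\kappa*\phi_z)(s)=\langle\pi_z(\mu_\kappa)E\pi_z(s)\mathbf{1}_{\partial T},\mathbf{1}_{\partial T}\rangle=\phi_z(s)\langle\pi_z(\mu_\kappa)\mathbf{1}_{\partial T},\mathbf{1}_{\partial T}\rangle=\phi_z(s_\kappa)\,\phi_z(s)
\]
is rigorous. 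Your boundary decomposition for the explicit evaluation $\phi_z(s_\kappa)=\gamma_o(z)$ and the quadratic argument for surjectivity of $\gamma_o$ are both fine.
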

	The following theorem (see \cite[Theorem 2]{amann}) characterises the positive definite spherical functions on automorphism groups satisfying Tits' independence property.
	\begin{thm} \label{GA:thm:tree_classification_pd_sph_func}
	Let $G$ be a non-compact, closed subgroup of $\mathrm{Aut}(T)$ that acts transitively on $\partial T$ and that satisfies Tits' independence property. Let
		\begin{align*}
			\mathcal{P}= \left( \left\{\frac{1}{2}\right\}+i\left[0, \frac{2\pi}{\kappa \log \delta} \right]\right)
			\bigcup \left(\left[ 0, \frac{1}{2}\right)+ i\left\{ 0 ,\frac{2\pi}{\kappa\log \delta} \right\} \right).
		\end{align*}
		Then the map
		\begin{align*}
			\mathcal{P}\to \mathcal{SP}(K\backslash G/K), \; z \mapsto \varphi_{\gamma_o(z)}
		\end{align*}
		is a bijection into the set of positive definite, spherical functions 
		$\mathcal{SP}(K\backslash G/K)$ for $(G,K)$.
	\end{thm}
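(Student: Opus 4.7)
\emph{Strategy.} My plan is, first, to identify $\mathcal{P}$ as the set of parameters $z$ for which $\varphi_{\gamma_o(z)}$ meets the necessary conditions for positive definiteness coming from the earlier asymptotic analysis, and, second, to exhibit for each $z\in\mathcal{P}$ a unitary representation of $G$ realising $\varphi_{\gamma_o(z)}$ as a diagonal matrix coefficient. Injectivity will essentially be read off from the reduction, so the only serious work is the construction of a unitary structure in the complementary-series range.

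\emph{Necessity and injectivity.} By Theorem~\ref{EX:thm_spherical_func_on_bdy_tree} every spherical function for $(G,K)$ is of the form $\varphi_{\gamma_o(z)}$, and by Proposition~\ref{EX:tree:prop_eigen_eq} it is uniquely determined by $\gamma_o(z)$. I would first record the two symmetries $\gamma_o(z)=\gamma_o(1-z)$ and $\gamma_o(z+\tfrac{4\pi i}{\kappa\log\delta})=\gamma_o(z)$, which follow by direct inspection of the explicit formula. Suppose that $\varphi_{\gamma_o(z)}$ is positive definite. Then $\gamma_o(z)\in\R$ by Proposition~\ref{EX:tree:prop_eigen_eq}, and $\varphi_{\gamma_o(z)}$ is bounded by $1$, which by Lemma~\ref{GA:lem:asym_of_sphe_func_tree} forces $\mathrm{Re}(z)\in[0,1]$; using the $z\leftrightarrow 1-z$ symmetry, one may restrict to $\mathrm{Re}(z)\in[0,\tfrac12]$. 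Writing $z=s+it$, reality of $\delta^z+\delta^{1-z}$ then gives either $s=\tfrac12$ (so that $\delta^z$ and $\delta^{1-z}$ are complex conjugates) or $\sin(t\log\delta)=0$; intersecting with a fundamental domain for the remaining imaginary periodicity singles out $\mathcal{P}$ exactly, and simultaneously yields the injectivity of $z\mapsto\varphi_{\gamma_o(z)}$ on $\mathcal{P}$.

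\emph{Sufficiency -- principal series.} For $z=\tfrac12+it$, the Poisson kernel satisfies $|P_o^z(s,\omega)|^2=P_o(s,\omega)=\tfrac{d\nu_{so}}{d\nu_o}(\omega)$. A direct substitution then shows that $\pi_z$ preserves the standard inner product on $L^2(\partial T,\nu_o)$, so that $\pi_z$ is unitary; by Theorem~\ref{EX:thm_spherical_func_on_bdy_tree}, the spherical function $\varphi_{\gamma_o(z)}(s)=\langle\pi_z(s)\one_{\partial T},\one_{\partial T}\rangle$ is then a diagonal matrix coefficient of a unitary representation, hence positive definite. No independence hypothesis is needed for this part.

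\emph{Sufficiency and main obstacle -- complementary series.} For $z$ in the remaining part of $\mathcal{P}$, i.e.\ real $z\in[0,\tfrac12)$ or $z\in[0,\tfrac12)+i\tfrac{2\pi}{\kappa\log\delta}$, the representation $\pi_z$ is \emph{not} unitary with respect to the standard inner product on $L^2(\partial T,\nu_o)$; instead I would construct a different $G$-invariant inner product on a dense subspace with respect to which $\pi_z$ becomes unitary. The natural candidate is the sesquilinear form associated with the intertwining kernel between $\pi_z$ and $\pi_{1-z}$, and the hard part is to prove that this form is positive. This is exactly where Tits' independence property is indispensable: it lets one decompose functions on $\partial T$ along the clopen partitions $\{\Omega_o(x)\mid d_c(o,x)=n\}$ into contributions coming from the independent sub-trees hanging off each vertex, and thereby reduce positivity of the kernel to a finite combinatorial statement on the balls $B_n(o)$ that can be verified inductively in $n$. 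Without independence -- for instance for $\mathrm{PSL}(2,\Q_p)$ -- the collection of positive definite spherical functions is strictly larger, confirming that this last step is precisely where independence enters.
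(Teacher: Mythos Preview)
The paper does not prove this theorem; it is quoted from Amann's thesis (cited as \cite[Theorem 2]{amann}) without argument. So there is no ``paper's own proof'' to compare against, and your sketch must be judged on its own merits.

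Your overall architecture is the standard one and is essentially what Amann (and before him Fig\`a-Talamanca--Nebbia and Ol'\v{s}anski\u{\i}) do: reduce to a fundamental domain in the $z$-plane using the symmetries of $\gamma_o$ together with boundedness and reality of the eigenvalue; realise the principal series ($\mathrm{Re}\,z=\tfrac12$) directly as unitary on $L^2(\partial T,\nu_o)$; and unitarise the complementary series via the intertwining operator between $\pi_z$ and $\pi_{1-z}$, whose positivity yields a $G$-invariant inner product. That part of your outline is fine, though in the necessity step you invoke Lemma~\ref{GA:lem:asym_of_sphe_func_tree} for boundedness when what you actually need is the asymptotic expansion of Lemma~\ref{EX:lemma_representation_of_sph_func}.

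There is, however, a genuine conceptual error in your last paragraph. Tits' independence property is \emph{not} where the positivity of the complementary-series intertwiner comes from, and your supporting example is false: for $\mathrm{PSL}(2,\Q_p)$ the set of positive definite spherical functions is \emph{exactly the same} as for $\mathrm{Aut}(T_{p+1})$, not strictly larger. Indeed, once $G$ is a non-compact closed subgroup acting transitively on $\partial T$, the double-coset structure $K\backslash G/K$ is already determined by the distance $|s|$ (Corollary~\ref{cor:tree_GP}), so the spherical functions for $(G,K)$ are literally the restrictions of those for $(\mathrm{Aut}(T),\mathrm{Aut}(T)_o)$; positive definiteness on $\mathrm{Aut}(T)$ then restricts to positive definiteness on $G$, giving the ``sufficiency'' direction for \emph{every} such $G$ without any appeal to independence. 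Conversely, the necessity direction uses only boundedness and reality of the eigenvalue, again independent of the independence property. In Amann's work, Tits' independence enters elsewhere -- in the classification and integrability of the \emph{non-spherical} (special and super-cuspidal) representations, cf.~Theorem~\ref{GA:thm:discrete_reps_asymptotic} -- not in the spherical classification you are trying to prove here. So your diagnosis of the ``main obstacle'' misplaces the role of the hypothesis, and the heuristic you offer to justify it is incorrect.
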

The classification of positive definite, spherical functions for $(G,K)$ can be reformulated as follows.
	\begin{rmk}[{cf. \cite[Theorem 2]{amann}}]\label{GA:rem:tree_char_vs_spec_of_mu}
		Let $G$ be a non-compact, closed subgroup of $\mathrm{Aut}(T)$ that acts transitively on $\partial T$ and that satisfies Tits' independence property.
		 The range of the map
		\begin{equation*}
		\mathcal{P}
			\to [-1,1], \; z\mapsto \mu_\kappa \ast \varphi_{\gamma_o(z)}(e)
			= \gamma_o(z)
		\end{equation*}
		is the spectrum $\sigma_{C^*(K\backslash G/K)}(\mu_\kappa)$
		of $\mu_\kappa$ in $C^*(K\backslash G/K)$.
		The following holds:
		\begin{equation*}
			\sigma_{C^*(K\backslash G/K)}(\mu_\kappa)=
			\begin{dcases}
				\; [-1,1] & \mbox{ if } \kappa = 1,\\
				\; \left[ -\frac{2+(d_0-2)(d_1-1)}{d_0(d_1-1)},1\right]
				& \mbox{ if } \kappa = 2.
			\end{dcases}
		\end{equation*}
Moreover, in the case $\kappa=1$, we have that $\varphi_{-1}=\varphi_{\gamma_o(i\frac{2\pi}{\log \delta})}$ is equal to the group 
		homomorphism $G\to \mathbb{C},\; s\mapsto (-1)^{\vert s\vert}$. Let $\pi'_\lambda\colon G\to \mathcal{U}(\mathcal{H}_\lambda)$
		be the spherical representation of $\varphi_\lambda$ for $\lambda \in [-1,1]$, i.e., $\pi'_\lambda\colon G\to \mathcal{U}(\mathcal{H}_\lambda)$ is the irreducible unitary group representation of $G$ with
		 $K$-invariant unit vector $\xi\in \mathcal{H}_\lambda$ such that
		$\varphi_\lambda = (\pi'_\lambda)_{\xi,\xi}$.
		Then the unitary group representation 
		$\pi'_{-1}\otimes \pi'_{\lambda}$ is
		unitary equivalent to $\pi'_{-\lambda}$ (see
		\cite[p. 43]{amann}).
	\end{rmk}
The previous results depend on the choice of the vertex $o\in V(T)$, i.e.~on the chosen Gelfand pair. This dependence, however, does not apply to the spherical unitary dual, as the following theorem shows.
	\begin{thm}[{\cite[Theorem 2]{amann}}]\label{GA:thm:independence_of_vertex}
	Let $G$ be a non-compact, closed subgroup of $\mathrm{Aut}(T)$ that acts transitively on $\partial T$ and that satisfies Tits' independence property.
	Let $o, o' \in V(T)$ be two vertices. Then $(G,G_o)$ and
	$(G, G_{o'})$ are Gelfand pairs, and the spherical unitary duals
	$(\widehat{G}_{G_o})_1$ and $(\widehat{G}_{G_{o'}})_1$ coincide.
	To be more precise, every spherical unitary representation for $(G,G_o)$ is
	a spherical unitary representation for $(G,G_{o'})$.
\end{thm}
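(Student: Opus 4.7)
The plan is as follows. First I would note that both $(G, G_o)$ and $(G, G_{o'})$ are Gelfand pairs directly by Corollary \ref{cor:tree_GP}, which requires only that $G$ be a non-compact closed subgroup of $\mathrm{Aut}(T)$ acting transitively on $\partial T$. The rest concerns the equality of the spherical unitary duals, and will proceed by reduction to an elementary case followed by an explicit construction in the Poisson kernel model.

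If $o$ and $o'$ lie in the same $G$-orbit, pick $s \in G$ with $o' = so$. Then $G_{o'} = sG_os^{-1}$, and for every irreducible unitary representation $\pi$, the map $\xi \mapsto \pi(s)\xi$ gives a bijection between $G_o$-invariant and $G_{o'}$-invariant vectors, so the spherical duals agree on such pairs. By Proposition \ref{prp:semihomogeneous}, this already handles the case $\kappa = 1$; for $\kappa = 2$ with $o, o'$ in distinct orbits, the same argument applied along a geodesic alternating between the two orbits reduces us to the situation $d_c(o, o') = 1$.

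For this adjacent case, the plan is to use the Poisson kernel model. By Theorems \ref{EX:thm_spherical_func_on_bdy_tree} and \ref{GA:thm:tree_classification_pd_sph_func}, every spherical irreducible unitary representation for $(G, G_o)$ arises as the cyclic $G$-subrepresentation $\pi'_{\gamma_o(z)}$ of $\pi_z$ on $L^2(\partial T, \nu_o)$ generated by $\mathbf{1}_{\partial T}$, for some $z \in \mathcal{P}$. I would then propose the explicit candidate
\[
	f_z(\omega) := \left( \frac{d\nu_{o'}}{d\nu_o}(\omega) \right)^{\! z}
\]
as a $G_{o'}$-invariant vector inside $\pi_z$. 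Using the cocycle identity $P_o(s, \omega) = d\nu_{so}/d\nu_o(\omega)$ and the chain rule for Radon--Nikodym derivatives, together with the identities $k_*\nu_{o'} = \nu_{o'}$ and $k_*\nu_o = \nu_{ko}$ for $k \in G_{o'}$, a direct computation shows $(\pi_z(k)f_z)(\omega) = P_o^z(k, \omega) f_z(k^{-1}\omega) = f_z(\omega)$, i.e.\ $f_z$ is $G_{o'}$-fixed. Since $o$ and $o'$ are adjacent, the explicit formula $P_o(s, \omega) = \delta^{\langle so, o; \omega\rangle/2}$ forces $d\nu_{o'}/d\nu_o$ to take only the values $\delta^{\pm 1/2}$, so that $f_z$ is bounded and automatically in $L^2(\partial T, \nu_o)$.

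The main obstacle, which I would address last, is to verify that $f_z$ actually lies in the cyclic $G$-submodule $\pi'_{\gamma_o(z)}$ rather than merely in the ambient space of $\pi_z$. For the principal series part of $\mathcal{P}$ (the critical line $\mathrm{Re}\, z = 1/2$) the representation $\pi_z$ is irreducible, so there is nothing more to check. For the complementary series and the endpoints of $\mathcal{P}$, I would appeal to Amann's classification (Theorem \ref{GA:thm:tree_classification_pd_sph_func}), which a priori parameterises the spherical dual of $(G, G_{o'})$ by the same set $\mathcal{P}$, and match irreducible representations by the eigenvalue of the corresponding convolution algebra generator. Swapping the roles of $o$ and $o'$ then provides the reverse inclusion, completing the argument.
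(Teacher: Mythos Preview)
The paper does not prove this theorem; it is stated with a citation to Amann's thesis and no argument is supplied in the text. So there is no in-paper proof to compare your proposal against, and what follows is an assessment of your sketch on its own merits.

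The reduction to adjacent vertices is correct, and the candidate $f_z = (d\nu_{o'}/d\nu_o)^z$ together with your $G_{o'}$-invariance computation is a good idea that works cleanly on the principal series, where $\pi_z$ is unitary and irreducible. The gap is in the complementary-series case. For $\mathrm{Re}\,z \neq \tfrac12$ the representation $\pi_z$ on $L^2(\partial T,\nu_o)$ is \emph{not} unitary, so the spherical unitary representation $\pi'_{\gamma_o(z)}$ is not a subrepresentation of it in any literal sense; it is obtained by GNS from the positive-definite function $\varphi_{\gamma_o(z)}$, or equivalently by completing a dense subspace with respect to an intertwining inner product. Hence the phrase ``$f_z$ lies in the cyclic $G$-submodule of $\pi_z$'' is not well-posed as stated. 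Your proposed rescue --- invoke Theorem~\ref{GA:thm:tree_classification_pd_sph_func} for $(G,G_{o'})$ and ``match by eigenvalue'' --- does not close the gap either: the convolution generators $\mu_\kappa$ and the eigenvalue functions $\gamma_o$, $\gamma_{o'}$ attached to the two Gelfand pairs are genuinely different when $d_0\neq d_1$, so knowing that both spherical duals are parametrised by the same set $\mathcal{P}$ does not by itself tell you that the \emph{same} irreducible representation of $G$ occupies a given parameter on both sides. As written, that final matching step assumes what you are trying to prove.

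A route that avoids this difficulty is to work with positive-definite functions rather than with the (non-unitary) $\pi_z$. Given the spherical function $\varphi_{\gamma_o(z)}$ for $(G,G_o)$, form its $G_{o'}$-bi-average
\[
\psi(s)=\int_{G_{o'}}\!\int_{G_{o'}}\varphi_{\gamma_o(z)}(k_1 s k_2)\,\mathrm{d}\mu(k_1)\,\mathrm{d}\mu(k_2),
\]
which is automatically positive-definite and $G_{o'}$-bi-invariant, hence a nonnegative scalar multiple of a spherical function for $(G,G_{o'})$. In the GNS picture this is exactly the projection of the $G_o$-spherical vector onto the $G_{o'}$-fixed subspace of $\pi'_{\gamma_o(z)}$, so the remaining task is the concrete verification that $\psi\neq 0$. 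That can be checked directly from the explicit description of $\varphi_{\gamma_o(z)}$ in Lemma~\ref{EX:lemma_representation_of_sph_func} (the value $\psi(e)$ is a convex combination of values of $\varphi_{\gamma_o(z)}$ on the double coset $G_{o'}\cdot e\cdot G_{o'}$), and it handles all $z\in\mathcal{P}$ uniformly without separating principal and complementary series.
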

	The following remark complements Lemma \ref{EX:lemma:spher_func_asympt_tree}.
	
	\begin{rmk}
		Let $G$ be a non-compact, closed subgroup of $\mathrm{Aut}(T)$ that acts
		transitively on $\partial T$ and that satisfies Tits' independence property.
		Then the positive definite spherical function
		\begin{align*}
			\varphi_{\gamma_o\left(\frac{1}{2}\right)}\colon G\to \C,\, s\mapsto \int P_o^{\frac{1}{2}}(s,\omega) \,\mathrm{d}\nu_o(\omega)
		\end{align*}
		belongs to $L^{2+}(G)$.
	\end{rmk}

	We close this paragraph with the description of the spherical sub-$C^*$-algebras
	of the $L^{p+}$-group $C^*$-algebras.
	\begin{prp}\label{GA:prp:spectrum_of_mu_in_radial_algs}
		Let $G$ be a non-compact, closed subgroup of $\mathrm{Aut}(T)$ that acts
		transitively on $\partial T$ and that satisfies Tits' independence property.
		Let $p\in [2,\infty)$, and let $q\in (1,2]$ be such that
		$\frac{1}{p}+\frac{1}{q}=1$. Then the spectrum
		$\sigma_{C^*_{L^{p+}}(K\backslash G/K)}(\mu_\kappa)$ of $\mu_{\kappa}$ in $C^*_{L^{p+}}(K\backslash G/K)$ is equal to
		\[
			\left\{ \gamma_o(z)\mid  z\in \mathcal{P}, \; \mathrm{Re}\;z
		\in \left[\frac{1}{p},\frac{1}{q}\right] \right\}.
		\]
	\end{prp}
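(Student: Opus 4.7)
The plan is to identify the maximal ideal space of the commutative $C^*$-algebra $C^*_{L^{p+}}(K\backslash G/K)$ with a distinguished set of spherical representations of $G$, and then to combine Amann's classification (Theorem~\ref{GA:thm:tree_classification_pd_sph_func}) with the integrability criterion of Lemma~\ref{EX:lemma:spher_func_asympt_tree} to read off the spectrum of $\mu_\kappa$.

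\textbf{Step 1 (characters as spherical $L^{p+}$-representations).} By Proposition~\ref{GA:gelfand:abelian_proj} we have $C^*_{L^{p+}}(K\backslash G/K) = p_{K,L^{p+}}\,C^*_{L^{p+}}(G)\,p_{K,L^{p+}}$, and Remark~\ref{GA:rem:GP_spherical_ideal_morita_eq} shows that its spectrum embeds into $\widehat{C^*_{L^{p+}}(G)}$ as precisely those irreducible representations $\pi$ that are spherical for $(G,K)$. On such a $\pi$ with $K$-invariant unit vector $\xi$, the corresponding character sends $\mu_\kappa$ to $\gamma_{\pi_{\xi,\xi}}$ by Proposition~\ref{EX:tree:prop_eigen_eq}, and by Theorem~\ref{GA:thm:tree_classification_pd_sph_func} this number equals $\gamma_o(z)$ for some $z \in \mathcal{P}$.

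\textbf{Step 2 (reduction to an integrability condition).} Fix $z \in \mathcal{P}$, and let $\pi'_{\gamma_o(z)}$ be the associated spherical representation with cyclic $K$-invariant unit vector $\xi$, so that $(\pi'_{\gamma_o(z)})_{\xi,\xi} = \varphi_{\gamma_o(z)}$. The claim is that $\gamma_o(z) \in \sigma_{C^*_{L^{p+}}(K\backslash G/K)}(\mu_\kappa)$ if and only if $\varphi_{\gamma_o(z)} \in L^{p+\varepsilon}(G)$ for every $\varepsilon > 0$. The forward direction follows from Step~1 together with Theorem~\ref{KS:cor_dual_Lpplus_algebra}, using that our groups are Kunze-Stein, to conclude $\varphi_{\gamma_o(z)} \in B_{L^{p+}}(G) \subset L^{p+\varepsilon}(G)$. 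For the converse, suppose $\varphi_{\gamma_o(z)}$ lies in every $L^{p+\varepsilon}(G)$. Then for $f,g \in C_c(G)$ the matrix coefficient $s \mapsto \langle \pi'_{\gamma_o(z)}(s)\pi'_{\gamma_o(z)}(f)\xi,\, \pi'_{\gamma_o(z)}(g)\xi\rangle$ is a two-sided convolution of $\varphi_{\gamma_o(z)}$ with compactly supported functions, hence again in $L^{p+\varepsilon}(G)$ by Young's inequality. Cyclicity of $\xi$ then makes $\pi'_{\gamma_o(z)}(C_c(G))\xi$ a dense subspace witnessing that $\pi'_{\gamma_o(z)}$ is an $L^{p+}$-representation, so it factors through $C^*_{L^{p+}}(G)$ and Step~1 gives the claim.

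\textbf{Step 3 (identifying the integrability condition).} Applying Lemma~\ref{EX:lemma:spher_func_asympt_tree} with $p' = p+\varepsilon$, we have $\varphi_{\gamma_o(z)} \in L^{p+\varepsilon}(G)$ iff $\mathrm{Re}\, z \in \bigl(\tfrac{1}{p+\varepsilon},\, 1 - \tfrac{1}{p+\varepsilon}\bigr)$, provided $\mathrm{Re}\, z \neq \tfrac{1}{2}$. Intersecting over $\varepsilon > 0$ yields $\mathrm{Re}\, z \in [\tfrac{1}{p}, \tfrac{1}{q}]$. The remaining value $\mathrm{Re}\, z = \tfrac{1}{2}$, where the hypothesis of Lemma~\ref{EX:lemma:spher_func_asympt_tree} fails, is covered by the remark preceding this proposition: $\varphi_{\gamma_o(1/2)} \in L^{2+}(G) \subseteq L^{p+}(G)$ for all $p \geq 2$, so $\gamma_o(\tfrac{1}{2})$ always lies in the spectrum. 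Combining Steps~2 and~3 identifies the spectrum with $\{\gamma_o(z) : z \in \mathcal{P},\; \mathrm{Re}\, z \in [\tfrac{1}{p}, \tfrac{1}{q}]\}$. The main obstacles are twofold: the Kunze-Stein containment $B_{L^{p+}}(G) \subset L^{p+\varepsilon}(G)$ (without which the forward inclusion would not be sharp), and the careful bookkeeping at the boundary values $\mathrm{Re}\, z \in \{\tfrac{1}{p}, \tfrac{1}{q}\}$, where $\varphi_{\gamma_o(z)}$ just fails to lie in $L^p(G)$ yet does lie in $L^{p+\varepsilon}(G)$ for every $\varepsilon>0$, which is precisely what the $L^{p+}$-condition records; the exceptional point $\mathrm{Re}\, z = \tfrac{1}{2}$ requires a separate argument since the representation formula of Lemma~\ref{EX:lemma_representation_of_sph_func} degenerates there.
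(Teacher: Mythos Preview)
Your proof is correct and follows essentially the same approach as the paper's: parametrise the characters of $C^*_{L^{p+}}(K\backslash G/K)$ by positive definite spherical functions via Amann's classification, then decide which ones factor through $C^*_{L^{p+}}(G)$ by combining Lemma~\ref{EX:lemma:spher_func_asympt_tree} (for the inclusion $\supseteq$) with the Kunze--Stein containment of Theorem~\ref{KS:cor_dual_Lpplus_algebra} (for $\subseteq$). You are in fact more explicit than the paper on two points it leaves implicit: the Young-inequality/cyclicity argument showing that $\varphi_{\gamma_o(z)}\in L^{p+}$ forces the GNS representation to be an $L^{p+}$-representation, and the treatment of the degenerate parameters where Lemma~\ref{EX:lemma_representation_of_sph_func} fails. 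One small sharpening: the hypothesis of Lemma~\ref{EX:lemma:spher_func_asympt_tree} fails only at the two isolated points $z=\tfrac12$ and $z=\tfrac12+i\tfrac{2\pi}{\kappa\log\delta}$ of $\mathcal{P}$, not on the whole line $\mathrm{Re}\,z=\tfrac12$; the remaining points of that line are already handled by the lemma, and for the second exceptional point one can bound $|\varphi_{\gamma_o(z)}|\le \varphi_{\gamma_o(1/2)}$ via the integral formula of Theorem~\ref{EX:thm_spherical_func_on_bdy_tree}.
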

	\begin{proof}
		Let $z\in \mathcal{P}$. From Theorem \ref{GA:thm:tree_classification_pd_sph_func}, it follows that the function $\varphi_{\gamma_o(z)}$ is a positive definite function. 
		Let us denote the GNS-construction of $\varphi_{\gamma_o(z)}$
		by $\pi'_{\gamma_o(z)}\colon G\to \mathcal{U}(\mathcal{H}'_{\gamma_o(z)})$. Lemma \ref{EX:lemma:spher_func_asympt_tree}
		implies that $\pi'_{\gamma_o(z)}$ extends to a
		*-representation of $C^*_{L^{p+}}(G)$ whenever
		$\mathrm{Re}\,z\in \left[\frac{1}{p},\frac{1}{q}\right]$. On the other hand, if $\mathrm{Re}\, z \not\in \left[\frac{1}{p},\frac{1}{q}\right]$, then $\pi'_{\gamma_o(z)}$ has a vector state, namely $\varphi_{\gamma_o(z)}$,
		that is not in $L^{p+}(G)$. Hence, $\pi'_{\gamma_o(z)}$  does not extend to
		$C^*_{L^{p+}}(G)$ by Theorem \ref{KS:cor_dual_Lpplus_algebra}. This implies that
		\begin{align*}
			\chi_{\varphi_{\gamma_o(z)}}\colon C_c(K\backslash G/K)\to \C,\,
			f\mapsto f\ast \varphi_{\gamma_o(z)}(e)
		\end{align*} 
		extends to a character on $C^*_{L^{p+}}(K\backslash G/K)$ if
		and only if $\mathrm{Re}\, z\in \left[\frac{1}{p},\frac{1}{q}\right]$. Hence the result follows from the identity
		\[
			\sigma_{C^*_{L^{p+}}(K\backslash G/K)}(\mu_\kappa)
			=
			\left\{ \chi_{\varphi_{\gamma_o(z)}}(\mu_\kappa)\mid
			z\in \mathcal{P}, \; \mathrm{Re}\,z \in \left[\frac{1}{p},\frac{1}{q}\right]\right\}.\]
	\end{proof}
	Note that Proposition \ref{GA:prp:spectrum_of_mu_in_radial_algs} immediately implies Theorem \ref{thm:groupcstaralgebrastrees}.

\subsection{Spherical, special and super-cuspidal representations}
We now elaborate more on the representation theory of the groups under consideration.

Let $T$ be a semi-homogeneous tree of degree $(d_0,d_1)$ with $d_0,d_1\geq 2$ and $d_0+d_1\geq 5$, and $G$ be a non-compact, closed subgroup of $\mathrm{Aut}(T)$ that acts transitively on $\partial T$. Let $\mathcal{C}$ be the set of finite, complete subtrees of $T$ endowed with the inclusion as ordering.
For every element $S\in \mathcal{C}$, the $S$-fixing group $G_S=\lbrace s\in G\mid sx=x \; \forall x\in V(S) \rbrace$ is a compact open subgroup of $G$. The set
\begin{align*}
	\lbrace G_S \subset G \mid S\in\mathcal{C}\rbrace
\end{align*}
forms a neighbourhood basis of the identity element of $G$. Note that $G_{S_2}\subset G_{S_1}$ whenever $S_1 \subset S_2$. For every element $S \in \mathcal{C}$, set
\begin{align*}
	p_{S} = \frac{1}{\mu_G(G_s)}\mathbf{1}_{G_S}\in C_c(G).
\end{align*} 
\begin{prp}
	The net $(p_S)_{S\in \mathcal{C}}\in C_c(G)^\mathcal{C}\subset C^*(G)^\mathcal{C}$ is a monotonically increasing approximate identity of $C^*(G)$ that consists of orthogonal projections.
\end{prp}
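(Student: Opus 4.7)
The plan is to verify three separate claims: each $p_S$ is an orthogonal projection in $C^*(G)$, the net $(p_S)_{S\in\mathcal{C}}$ is monotonically increasing in the projection order, and it forms an approximate identity. The first two are direct convolution computations; the last is a standard density-plus-continuity argument.

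For the projection property, self-adjointness of $p_S$ follows from the identity $p_S^*(g) = \Delta_G(g^{-1})\overline{p_S(g^{-1})}$: since $p_S$ is real-valued, supported on the compact subgroup $G_S$, and the modular function $\Delta_G$ is trivial on any compact subgroup, we obtain $p_S^* = p_S$. Idempotence is the calculation
\begin{equation*}
	(p_S \ast p_S)(g) = \frac{\mu_G(G_S \cap gG_S)}{\mu_G(G_S)^2},
\end{equation*}
which evaluates to $\mu_G(G_S)^{-1}$ when $g\in G_S$ (since $G_S$ is a subgroup, so $gG_S = G_S$) and to $0$ otherwise, i.e.\ $p_S \ast p_S = p_S$. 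For monotonicity, let $S_1 \subset S_2$, so that $G_{S_2} \subset G_{S_1}$; an analogous calculation gives $G_{S_1}\cap gG_{S_2} = gG_{S_2}$ for $g\in G_{S_1}$ and empty otherwise, yielding $p_{S_1}\ast p_{S_2} = p_{S_1}$. Among orthogonal projections in a $C^*$-algebra, the identity $pq=p$ characterises $p\leq q$, so this shows monotonicity along $\mathcal{C}$.

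For the approximate identity property, I would use the contractive inclusion $L^1(G) \hookrightarrow C^*(G)$ together with the uniform bound $\norm{p_S}_{C^*(G)} \leq 1$ (orthogonal projections have norm at most one) in order to reduce, by a standard $\varepsilon/3$-argument, the convergence $p_S \ast x \to x$ for $x\in C^*(G)$ to $L^1$-convergence on the dense subalgebra $C_c(G)$. For $f\in C_c(G)$, Minkowski's integral inequality yields
\begin{equation*}
	\norm{p_S \ast f - f}_1 \leq \frac{1}{\mu_G(G_S)}\int_{G_S}\norm{L_h f - f}_1 \, \mathrm{d}\mu_G(h) \leq \sup_{h\in G_S}\norm{L_h f - f}_1,
\end{equation*}
where $L_h$ denotes left translation by $h$. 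Strong $L^1$-continuity of translation, combined with the fact that $\lbrace G_S \mid S\in\mathcal{C}\rbrace$ is a neighbourhood basis of $e$, forces the right-hand side to $0$. The computation for $f \ast p_S \to f$ is entirely analogous. No step presents a serious obstacle, the only non-algebraic ingredient being the strong continuity of translation on $L^1(G)$, which is standard.
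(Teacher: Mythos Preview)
Your proof is correct in all three parts: the convolution computations for idempotence and self-adjointness are accurate, the monotonicity argument via $p_{S_1}\ast p_{S_2}=p_{S_1}$ correctly yields $p_{S_1}\leq p_{S_2}$ when $S_1\subset S_2$, and the approximate-identity step is the standard density argument combined with the norm inequality $\norm{\cdot}_{C^*(G)}\leq \norm{\cdot}_{L^1(G)}$ and strong continuity of translation.

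There is nothing to compare against: the paper states this proposition without proof, treating it as a well-known fact about the normalised characteristic functions of a neighbourhood basis of compact open subgroups in a totally disconnected locally compact group. Your write-up is exactly the routine verification that such a statement requires.
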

Let $\pi\colon G\to \mathcal{U}(\mathcal{H})$ be a unitary group representation of $G$. It follows from the continuity of $\pi_*\colon C^*(G) \to \mathcal{B}(\mathcal{H})$ that there is an element $S\in \mathcal{C}$ such that
the orthogonal projection 
$
	\pi_*\left(p_S\right)$
is non-trivial.
In particular, there is a minimal complete subtree $S\in \mathcal{C}$ of $T$ such that $\pi_*\left(p_S\right)\neq 0$. Let $M_v$ be the set
of minimal complete subtrees of $T$ such that $\pi_*\left(p_S\right)\neq 0$.
\begin{dfn}
	Suppose that $G$ satisfies Tits' independence property, and let
	$\pi\colon G\to \mathcal{U}(\mathcal{H})$ be an irreducible unitary
	group representation of $G$.
	\begin{enumerate}[(i)]
		\item $\pi\colon G\to \mathcal{U}(\mathcal{H})$ is called \emph{spherical} if there exists an element
		in $M_v$ that is exactly one vertex.
		\item $\pi\colon G\to \mathcal{U}(\mathcal{H})$ is called \emph{special} if it is not spherical and there exists an element in $M_v$ that is an edge.
		\item $\pi\colon G\to \mathcal{U}(\mathcal{H})$ is called \emph{super-cuspidal} if it	is neither spherical nor special.
	\end{enumerate}
\end{dfn}
	The following theorem is part of \cite[Theorem 2]{amann}.
	\begin{thm}\label{GA:thm:discrete_reps_asymptotic}
		Let $G$ be a non-compact, closed subgroup of $\mathrm{Aut}(T)$ that acts
		transitively on $\partial T$ and that satisfies Tits' independence property. Then the following holds:
		\begin{enumerate}[(i)]
		\item Every special representation of $G$ is an $L^2$-representation.
		\item Every super-cuspidal representation of $G$ is an $L^1$-representation.
		\end{enumerate}
	\end{thm}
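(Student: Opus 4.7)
The plan is to exploit Tits' independence property together with the minimality built into the definition of $M_v$ in order to extract strong orthogonality relations for matrix coefficients. Fix an irreducible unitary representation $\pi \colon G \to \mathcal{U}(\mathcal{H})$ of the appropriate type, let $S_0 \in M_v$, and choose a unit vector $\xi$ in the range of $\pi_*(p_{S_0})$. By minimality of $S_0$ in $M_v$, the vector $\xi$ is annihilated by the projection $\pi_*(p_{S})$ for every complete subtree $S$ whose corresponding fixing group is too small to lie in $M_v$; this vanishing is the central orthogonality input.

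For part (i), take $\pi$ special, so that one may choose $S_0 = e = \{x,y\}$ to be an edge. Since $\pi$ is not spherical, $\pi_*(p_{G_x})\xi = 0 = \pi_*(p_{G_y})\xi$. Tits' independence gives $G_e \cong F_x \times F_y$, where $F_x$ acts only on the half-tree $T_x$ and fixes $T_y$ pointwise (and symmetrically for $F_y$). For any $g \in G$, the conjugate $gG_eg^{-1}$ has the analogous product structure at the edge $ge$, and the intersection $G_e \cap gG_eg^{-1}$ inherits a compatible decomposition along the half-trees beyond the endpoints of the geodesic joining $e$ to $ge$. Computing $\langle \pi(g)\xi,\xi\rangle$ by successively averaging $\pi(g)\xi$ over these factors and invoking the vanishing averages $\pi_*(p_{G_z})\xi = 0$ at each intermediate vertex $z$ yields an estimate of the form $|\pi_{\xi,\xi}(g)| \le C\,\delta^{-d_c(e,ge)/2}$. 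Since the number of $G_e$-orbits of edges at distance $n$ from $e$ grows at most like $\delta^{n/2}$, one obtains
\begin{equation*}
\int_G |\pi_{\xi,\xi}(g)|^2\,\mathrm{d}\mu_G(g) \;\lesssim\; \sum_{n\ge 0} \delta^{n/2}\cdot \delta^{-n} \;<\; \infty,
\end{equation*}
and irreducibility together with the density of $G$-translates of $\xi$ gives the $L^2$-property after polarisation.

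For part (ii), let $\pi$ be super-cuspidal, so $S_0$ contains at least three vertices and hence at least two edges. Iterating Tits' independence along each edge of $S_0$ produces a decomposition of $G_{S_0}$ as a product of factors acting on the several disjoint half-trees beyond the leaves of $S_0$, and minimality forces the vanishing of $\xi$ under each of the corresponding vertex and edge averages. The additional orthogonality (one further vanishing direction compared to the special case) improves the matrix-coefficient decay to $|\pi_{\xi,\xi}(g)| \le C\,\delta^{-d_c(S_0,gS_0)}$, so that
\begin{equation*}
\int_G |\pi_{\xi,\xi}(g)|\,\mathrm{d}\mu_G(g) \;\lesssim\; \sum_{n\ge 0} \delta^{n/2}\cdot \delta^{-n} \;<\; \infty,
\end{equation*}
giving $L^1$-integrability.

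The main technical obstacle is making the correspondence between orthogonality and exponential decay of matrix coefficients precise. One has to identify exactly which compact-subgroup averaging operators annihilate $\pi(g)\xi$ for a given $g$, and show that the remaining active half-tree directions contribute a factor of $\delta^{-1/2}$ per unit of tree distance. This bookkeeping is the content of Amann's argument in \cite{amann}; an alternative route would be to realise $\pi$ as a subrepresentation of a representation induced from a parabolic-type subgroup (the stabiliser of a boundary point, or of a pair of boundary points) and extract the integrability from explicit computations with the Poisson kernel $P_o$ introduced earlier.
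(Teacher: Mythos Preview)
The paper does not give its own proof of this theorem: it is stated as ``part of \cite[Theorem 2]{amann}'' and no argument is supplied. So there is nothing in the paper to compare your proposal against.

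Your sketch is, as you yourself note at the end, an outline of Amann's argument. The strategy---use minimality of $S_0$ to get vanishing under smaller compact-open averaging projections, then use the product decomposition from Tits' independence to propagate this vanishing along the geodesic between $S_0$ and $gS_0$, obtaining exponential decay of $\langle\pi(g)\xi,\xi\rangle$ in the tree distance---is indeed the correct one. A couple of points worth sharpening if you want this to stand as an actual proof rather than a plan: the claimed decay exponent for special representations (your $\delta^{-d_c(e,ge)/2}$) needs a careful derivation, since the passage from ``each intermediate vertex average kills $\xi$'' to a specific exponential rate involves an induction along the geodesic that you have only gestured at; and in the super-cuspidal case the matrix coefficients of $G_{S_0}$-fixed vectors are in fact compactly supported (not merely $\delta^{-n}$-decaying), which is stronger than what you wrote and is what Amann actually proves. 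Your alternative suggestion of realising $\pi$ inside a parabolically induced representation would work for the special representations but not for the super-cuspidal ones, which by definition do not embed in such inductions.
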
	
	\subsection{Decomposition of the unitary dual} As in the previous paragraph, we assume that	$T$ is a semi-homogeneous tree of degree $(d_0,d_1)$ with 
	$d_0,d_1\geq 2$ and $d_0+d_1 \geq 5$. We now prove a decomposition theorem of the unitary dual of automorphism groups of $T$.
	\begin{thm}\label{GA:thm:decomposition_grp_alg_auto_grp}
		Let $G$ be a non-compact, closed subgroup of $\mathrm{Aut}(T)$ that acts
		transitively on $\partial T$ and that satisfies Tits' independence property. Let $o\in V(T)$ be a fixed vertex, and let $K = G_o$. Then there is
		a closed ideal $C^*(G)_{\mathrm{disc}}$ in $C^*(G)$ (where ``disc'' stands for ``discrete'') such that
		the spectrum $\reallywidehat{C^*(G)_{\mathrm{disc}}}$ consists
		of equivalence classes of super-cuspidal and special representations.
		Furthermore, we have the following decomposition
		\begin{align*}
			C^*(G) = C^*(G)_{\mathrm{disc}}\oplus C^*(G,K),
		\end{align*}
		where $C^*(G,K)$ is the spherical ideal for $(G,K)$ in $C^*(G)$.
	\end{thm}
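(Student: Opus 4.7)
The plan is to realize the decomposition by showing that the set $(\widehat{G}_K)_1$ of spherical irreducible representations is both open and closed in the Fell topology on $\widehat{G}$, and then defining $C^*(G)_{\mathrm{disc}}$ as the closed ideal dual to the complementary clopen subset.

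Concretely, I would set
\[
C^*(G)_{\mathrm{disc}} := \bigcap_{[\pi] \in (\widehat{G}_K)_1} \ker(\pi),
\]
the closed ideal cut out by the simultaneous vanishing on all spherical irreducible representations. By Remark \ref{GA:rem:GP_spherical_ideal_morita_eq}, the spectrum $\reallywidehat{C^*(G,K)}$ identifies with the open subset $\{[\pi] \in \widehat{G} : \pi(p_K) \neq 0\}$ of $\widehat{G}$, and by Theorem \ref{GA:thm:independence_of_vertex} this open set equals $(\widehat{G}_K)_1$, since the spherical unitary dual is independent of the chosen vertex and hence having a $G_o$-fixed vector is equivalent to being spherical in the tree-theoretic sense.

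The crucial step --- and main obstacle --- is to establish that $(\widehat{G}_K)_1$ is Fell-closed in $\widehat{G}$, i.e., that no special or super-cuspidal representation is weakly contained in the spherical representations. My approach would be to combine the explicit parametrization of spherical representations by the compact set $\mathcal{P}$ (Theorem \ref{GA:thm:tree_classification_pd_sph_func}), which yields a continuous compact family of positive-definite spherical functions $\varphi_{\gamma_o(z)}$, with the $L^p$-integrability results of Theorem \ref{GA:thm:discrete_reps_asymptotic}: special representations are $L^2$-integrable and super-cuspidal representations are even $L^1$-integrable, placing them firmly in the reduced dual (and in the super-cuspidal case as discrete components of the regular representation), well-separated from the non-tempered complementary-series spherical representations. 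A Fell-limit $\pi$ of spherical representations produces, after $K$-bi-averaging its matrix coefficients, a limit of positive non-negative linear combinations of the $\varphi_{\gamma_o(z)}$; compactness of $\mathcal{P}$ and continuity of $z \mapsto \varphi_{\gamma_o(z)}$ force this limit to correspond to a genuine spherical function, yielding a non-trivial $K$-invariant vector in $\pi$, hence $\pi \in (\widehat{G}_K)_1$.

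Given the Fell-closedness, the decomposition follows from standard ideal-theoretic arguments. The intersection $C^*(G,K) \cap C^*(G)_{\mathrm{disc}}$ is a closed ideal whose spectrum is $(\widehat{G}_K)_1 \cap (\widehat{G} \setminus (\widehat{G}_K)_1) = \emptyset$, hence it equals $\{0\}$. The sum $C^*(G,K) + C^*(G)_{\mathrm{disc}}$ is a closed ideal (sums of closed ideals in a $C^*$-algebra are closed) whose spectrum is $(\widehat{G}_K)_1 \cup (\widehat{G} \setminus (\widehat{G}_K)_1) = \widehat{G}$, using Amann's trichotomy that every irreducible representation is exactly one of spherical, special, or super-cuspidal. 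An ideal of a $C^*$-algebra with full spectrum equals the entire algebra, so the sum equals $C^*(G)$. The spectrum of $C^*(G)_{\mathrm{disc}}$ is then $\widehat{G} \setminus (\widehat{G}_K)_1$, which by the trichotomy consists precisely of the equivalence classes of special and super-cuspidal representations, as required.
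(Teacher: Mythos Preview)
Your overall framework --- show that $(\widehat{G}_K)_1$ is clopen in $\widehat{G}$ and then deduce the direct-sum splitting from standard ideal theory --- is exactly the paper's strategy, and your openness step and your final ideal-theoretic paragraph are correct and match the paper.

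The gap is in your closedness argument. Suppose $\pi$ is special or super-cuspidal, so $\pi$ has \emph{no} non-zero $K$-invariant vector. Then $K$-bi-averaging any diagonal matrix coefficient $\pi_{\xi,\xi}$ yields $\langle \pi(\cdot)P_K\xi,P_K\xi\rangle = 0$. Thus your averaged limit is identically zero, and zero is trivially a uniform-on-compacta limit of non-negative combinations of the $\varphi_{\gamma_o(z)}$ (let the coefficients tend to $0$). No contradiction is obtained, and you cannot conclude that $\pi$ has a non-trivial $K$-fixed vector. The compactness of $\mathcal{P}$ does not rescue this: $\widehat{G}$ is not Hausdorff, so the image of a compact parameter space need not be Fell-closed. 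Likewise, the $L^p$-integrability of special and super-cuspidal representations places them in $\widehat{G}_r$, but so are the tempered (principal-series) spherical representations --- there is no separation there.

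The paper closes this gap with two different mechanisms. For super-cuspidal $\pi$ it uses that $L^1$-representations are isolated points of $\widehat{G}$ (Duflo--Moore), so $[\pi]$ cannot lie in the closure of anything else. For special $\pi$ it picks an edge $e\ni o$ with $\pi_*(p_e)\neq 0$ and shows that $p_oC^*(G)p_o$ is actually an \emph{ideal} in the corner $p_eC^*(G)p_e$, giving an orthogonal splitting $p_eC^*(G)p_e = p_oC^*(G)p_o \oplus (p_e-p_o)C^*(G)(p_e-p_o)$. Transporting this through the Morita equivalence with $\langle p_e\rangle_{C^*(G)}$ shows that $(\widehat{G}_K)_1$ is closed inside $\reallywidehat{\langle p_e\rangle_{C^*(G)}}$, hence $[\pi]$ is not in its closure. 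You will need an argument of this type (or another genuine obstruction) to complete the proof.
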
	
	\begin{proof}
	From Remark \ref{GA:rem:GP_spherical_ideal_morita_eq}, it follows that $(\widehat{G}_K)_1$ is an open subset of $\widehat{G}$. In order to prove the theorem, it suffices to show that $(\widehat{G}_K)_1$ is also a closed subset
		of $\widehat{G}$.
		
		By Theorem \ref{GA:thm:independence_of_vertex}, every spherical representation of $G$ defines an equivalence class of $(\widehat{G}_K)_1$. Hence,
		the set $\widehat{G}\setminus (\widehat{G}_K)_1$ consists of
		equivalence classes of special and super-cuspidal representations.
		
		From Theorem \ref{GA:thm:discrete_reps_asymptotic}, we know that
		every super-cuspidal representation is an $L^1$-representation. 
		By \cite[Corollary 1]{dufloRegularRepresentationNonunimodular1976},
		every irreducible $L^1$-representation forms an open point in
		$\widehat{G}$. This implies that for every
		super-cuspidal representation $\pi\colon G\to \mathcal{U}(\mathcal{H})$ of $G$,
		the equivalence class $[\pi]\in \widehat{G}$ does not belong
		to the closure of $(\widehat{G}_K)_1$.
		
		Now, suppose that
		$\pi\colon G\to \mathcal{U}(\mathcal{H})$ is a special representation of $G$. Recall that
		$C^*(G)$ admits an approximate identity $(p_S)_{S\in \mathcal{C}}$
		of orthogonal projections given by $p_S= \frac{1}{\mu_G(G_S)}\mathbf{1}_{G_S}$,
		where $\mathcal{C}$ denotes the directed set consisting of finite, complete 		subtrees of $T$ ordered by inclusion. By assumption, there exists an edge $e\in E(T)$
		such that $\pi_*(p_e)\neq 0$. Without loss of generality, $o\in e$. Again
		by assumption, $\pi_*(p_o) = 0$. Note that $p_o\leq p_e$. Therefore, the ideal $C^*(G,K)$ is contained in the closed ideal
		$\langle p_e \rangle_{C^*(G)}$ generated by the projection
		$p_e$, and the unital $C^*$-algebra
		$p_e C^*(G)p_e$ decomposes into an orthogonal direct sum
		$p_oC^*(G)p_o \oplus (p_e-p_o)C^*(G)(p_e-p_o)$. Indeed, this decomposition follows from the fact that $p_oC^*(G)p_o$ is an ideal in $p_e C^*(G)p_e$. Since
		$p_e C^*(G)p_e$ is Morita-equivalent to $\langle p_e\rangle_{C^*(G)}$,
		and since $p_e C^*(G)p_e = p_oC^*(G)p_o \oplus (p_e-p_o)C^*(G)(p_e-p_o)$,
		the spectrum $(\widehat{G}_K)_1$ of $C^*(G,K)$, considered
		as a subspace of $\reallywidehat{\langle p_e\rangle_{C^*(G)}}$, is closed.
		But this implies that $[\pi]$ does not belong to the closure of $(\widehat{G}_K)_1$ when	considered as a subspace of $\widehat{G}$ as well.
		
		We have thus excluded the possibility that super-cuspidal and special representations	form points in $\widehat{G}$ that lie in the closure of $(\widehat{G}_K)_1$.	Since these points form the complement of $(\widehat{G}_K)_1$ in $\widehat{G}$, it follows that $(\widehat{G}_K)_1$ is closed in $\widehat{G}$.
	\end{proof}

\subsection{Proofs of Theorem \ref{thm:ideals} and \ref{thm:star_iso}}	
We are now ready to prove Theorem \ref{thm:ideals} and Theorem \ref{thm:star_iso}.
\begin{proof}[Proof of Theorem \ref{thm:ideals}]
Let $G$ be a non-compact, closed subgroup of $\mathrm{Aut}(T)$ that
acts transitively on $V(T)$ and on $\partial T$. Furthermore, suppose that
$G$ has Tits' independence property.

The uniqueness statement in Theorem \ref{thm:ideals} relies on Theorem \ref{thm:groupcstaralgebrastrees}. Let $C^*_\mu(G)$ be a group $C^*$-algebra
such that the dual space $C^*_\mu(G)^*$ is an ideal in $B(G)$. We have to show that $C^*_\mu(G)$ is an $L^{p+}$-group-$C^*$-algebra for a suitable
$p\in[2,\infty]$.

For this purpose, let $o\in V(T)$ be any vertex of $T$ and $K= G_o$.
Theorem \ref{GA:thm:decomposition_grp_alg_auto_grp} implies that $\widehat{G}\setminus \widehat{G}_r\subset (\widehat{G}_K)_1$. As was explained before, the group $G$ is $K$-amenable. From Theorem \ref{CR:thm:KK_eq_abs_grp_alg}, it follows that the canonical quotient map $s\colon C_\mu^*(G)\to C^*_r(G)$ induces
	an isomorphism $s_*\colon K_i(C^*_\mu(G))\to K_i(C^*_r(G)$
	for $i\in \{0,1\}$. By \cite[Lemma 3.3]{siebenand}, this in turn implies that the canonical quotient map $s \vert \colon C^*_\mu(K\backslash G/K)\to C^*_r(K\backslash G/K)$ induces an isomorphism $(s\vert)_*\colon K_i(C^*_\mu(K\backslash G/K))\to K_i(C^*_r(K\backslash G/K))$ for $i\in \{0,1\}$.
	Since $\mu_1$ generates the unital $C^*$-algebra
	$C^*_{\mu}(K\backslash G/K)$, the Gelfand transformation leads to a 
	*-iso\-mor\-phism from $C^*_\mu(K\backslash G/K)$ to
	$C(\sigma_{C^*_\mu(K\backslash G/K)}(\mu_1))$. Let $r\in \sigma_{C^*_\mu(K\backslash G/K)}(\mu_1)$ be
	the largest number in $\sigma_{C^*_\mu(K\backslash G/K)}(\mu_1)$. From Remark \ref{GA:rem:tree_char_vs_spec_of_mu} and the assumption that $G$ acts transitively on $V(T)$, it follows that
	$\sigma_{C^*_\mu(K\backslash G/K)}(\mu_1)\subset[-r,r]$ and that
	$-r$ belongs to $\sigma_{C^*_\mu(K\backslash G/K)}(\mu_1)$ as well.
	
	By Proposition \ref{GA:prp:spectrum_of_mu_in_radial_algs}, there is an
	element $p\in [2,\infty]$ with
	\[\sigma_{C^*_{L^{p+}}(K\backslash G/K)}(\mu_1)=[-r,r].\]
	It therefore suffices to show that $\sigma_{C^*_\mu(K\backslash G/K)}(\mu_1)=[-r,r]$. In order to do this, let $r_2$ be the spectral radius of $\mu_1$ in 
	$C^*_r(K\backslash G/K)$. Then we have $\sigma_{C^*_r(K\backslash G/K)}(\mu_1)
	= [-r_2,r_2]$, and the quotient map $s\vert$ translates
	to the restriction map $\mathrm{res}\colon C(\sigma_{C^*_\mu(K\backslash G/K)}(\mu_1))
	\to C([-r_2,r_2])$. The kernel of $\mathrm{res}$ is equal to
	$C_0(\sigma_{C^*_\mu(K\backslash G/K)}(\mu_1)\setminus [-r_2,r_2] )$.
	From the six-term exact sequence in K-theory and the fact that $\mathrm{res}$
	induces isomorphisms in K-theory, it follows that $C_0(\sigma_{C^*_\mu(K\backslash G/K)}(\mu_1)\setminus [-r_2,r_2] )$ has trivial K-theory. So, if there were an element
	$t \in [-r,r]$ that would not belong to $\sigma_{C^*_\mu(K\backslash G/K)}(\mu_1)$, then $\sigma_{C^*_\mu(K\backslash G/K)}(\mu_1)\setminus [-r_2,r_2]$ would contain a non-empty compact open subset, so that
	\[
		K_0(C_0(\sigma_{C^*_\mu(K\backslash G/K)}(\mu_1)\setminus [-r_2,r_2] ))\neq 0.
		\]
	Hence, we must have that $\sigma_{C^*_\mu(K\backslash G/K)}(\mu_1) = [-r,r]$, which completes the	proof.
\end{proof}
We also prove a version of Theorem \ref{thm:ideals} for an important class of groups that do not satisfy Tits' independence property, namely the groups $\mathrm{SL}(2,\mathbb{Q}_q)$. We refer to explicit results on the representation theory of these groups without recalling them explicitly.
\begin{thm}
	Let $q$ be an odd prime number, and let	$G= \mathrm{SL}(2,\mathbb{Q}_q)$. Let $C^*_{\mu}(G)$ be a group $C^*$-algebra of $G$ whose dual space is a $G$-invariant ideal of $B(G)$. Then there exists a unique element
	$p\in [2,\infty]$ such that
	$C^*_{L^{p+}}(G) = C^*_\mu(G)$.
\end{thm}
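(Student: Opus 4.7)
The plan is to mirror the proof of Theorem \ref{thm:ideals}, replacing the uses of Tits' independence property with classical facts from the representation theory of $G = \mathrm{SL}(2,\mathbb{Q}_q)$. Since $q$ is an odd prime, $G$ acts on its Bruhat-Tits tree $T$, which is homogeneous of degree $q+1\geq 4$, transitively on both $V(T)$ and $\partial T$. Fixing the standard vertex $o$, its stabiliser $K = \mathrm{SL}(2,\mathbb{Z}_q)$ is maximal compact, and $(G,K)$ is a Gelfand pair of type $\kappa = 1$ with $\delta = q$. Every result from Section \ref{sec:sphericalsubcstaralgebras} that does not invoke Tits' independence applies directly: in particular the spherical algebra description in Proposition \ref{GA:prop:spherical_algebra_auto_grp}, the asymptotic analysis of $h_z$ and $\varphi_{\gamma_o(z)}$ leading to Lemma \ref{EX:lemma:spher_func_asympt_tree}, and the sign-character identity in Remark \ref{GA:rem:tree_char_vs_spec_of_mu}, which only requires that $G$ preserves the bipartite structure of $T$.

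Two substitutes are required. First, an analogue of Proposition \ref{GA:prp:spectrum_of_mu_in_radial_algs} describing $\sigma_{C^*_{L^{p+}}(K\backslash G/K)}(\mu_1)$ in terms of the strip $\mathcal{P}$; its proof transfers verbatim once Amann's classification (Theorem \ref{GA:thm:tree_classification_pd_sph_func}) is replaced by the classical classification of positive definite spherical functions for $(\mathrm{SL}(2,\mathbb{Q}_q),K)$ (spherical principal and complementary series parametrised by the same strip $\mathcal{P}$, cf.~\cite{sallyIntroductionAdicFields1998}), and since $\mathrm{SL}(2,\mathbb{Q}_q)$ is a Kunze-Stein group, Theorem \ref{KS:cor_dual_Lpplus_algebra} still applies. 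Second, an analogue of the decomposition Theorem \ref{GA:thm:decomposition_grp_alg_auto_grp}: from the classical representation theory of $\mathrm{SL}(2,\mathbb{Q}_q)$, the only non-tempered irreducible unitary representations are the trivial representation and the spherical complementary series, hence $\widehat{G}\setminus \widehat{G}_r \subset (\widehat{G}_K)_1$; the remaining non-spherical irreducibles (discrete series, Steinberg, and supercuspidals) are square-integrable and form the open complement of $(\widehat{G}_K)_1$ in $\widehat{G}$, so $(\widehat{G}_K)_1$ is clopen, yielding the required decomposition.

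With these substitutes in hand, the remaining argument is identical to the proof of Theorem \ref{thm:ideals}. The Haagerup property for the rank-one group $G$ gives K-amenability, so by Theorem \ref{CR:thm:KK_eq_abs_grp_alg} the assumption that $C^*_\mu(G)^*$ is a $G$-invariant ideal of $B(G)$ makes $s\colon C^*_\mu(G) \to C^*_r(G)$ a KK-equivalence, which by \cite[Lemma 3.3]{siebenand} restricts to a KK-equivalence of the commutative spherical sub-$C^*$-algebras. Under the Gelfand transform, these become $C(\sigma_{C^*_\mu(K\backslash G/K)}(\mu_1))$ and $C(\sigma_{C^*_r(K\backslash G/K)}(\mu_1))$; the sign-character symmetry together with the hypothesis that $C^*_\mu(G)^*$ is closed under tensoring with unitary representations forces $\sigma_{C^*_\mu(K\backslash G/K)}(\mu_1)\subset[-r,r]$ with $\pm r$ attained, and the K-theoretic argument on the kernel $C_0(\sigma_{C^*_\mu(K\backslash G/K)}(\mu_1)\setminus [-r_2,r_2])$ of the restriction map then forces $\sigma_{C^*_\mu(K\backslash G/K)}(\mu_1) = [-r,r]$. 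By the first substitute, this interval is $\sigma_{C^*_{L^{p+}}(K\backslash G/K)}(\mu_1)$ for a unique $p\in[2,\infty]$, and since the spherical sub-$C^*$-algebras determine the ambient group $C^*$-algebras inside this class via Theorem \ref{CR:thm:KK_eq_abs_grp_alg} and the clopen decomposition, one concludes $C^*_\mu(G) = C^*_{L^{p+}}(G)$.

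The main obstacle is verifying the second substitute in the absence of Theorem \ref{GA:thm:discrete_reps_asymptotic}, whose proof relied on Tits' independence via Amann's geometric analysis of matrix coefficients. For $\mathrm{SL}(2,\mathbb{Q}_q)$ this must be extracted from explicit matrix-coefficient computations for the discrete series, Steinberg, and supercuspidal representations of $p$-adic $\mathrm{SL}(2)$, which are classically known to be square-integrable; once this input is available from the literature, the rest of the proof is a routine adaptation of the argument for Theorem \ref{thm:ideals}.
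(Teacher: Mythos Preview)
Your overall strategy is sound and matches the paper's, but there is a factual error that breaks a key step. You assert that $G=\mathrm{SL}(2,\mathbb{Q}_q)$ acts transitively on the vertices of its Bruhat--Tits tree, so that $\kappa=1$ and the sign character $s\mapsto(-1)^{|s|}$ is nontrivial. This is false: the image $\mathrm{PSL}(2,\mathbb{Q}_q)\subset\mathrm{Aut}(T_q)$ has exactly two orbits on $V(T_q)$ (only $\mathrm{PGL}(2,\mathbb{Q}_q)$ acts vertex-transitively), so $\kappa=2$ and $|s|$ is always even. Consequently the sign character is trivial, and your symmetry argument ``$\sigma_{C^*_\mu(K\backslash G/K)}(\mu_1)\subset[-r,r]$ with $\pm r$ attained'' collapses. (Incidentally $\delta=q^2$, not $q$.)

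The paper avoids this issue by not invoking any symmetry at all. Instead it works through the quotient $\overline{G}=\mathrm{PSL}(2,\mathbb{Q}_q)$, observes that the spherical Gelfand pairs $(G,K)$ and $(\overline{G},\overline{K})$ coincide, and then appeals directly to the explicit classification in \cite{sallyIntroductionAdicFields1998} (and \cite{choucrounAnalyseHarmoniqueGroupes1994}) to identify the kernel of $C^*(K\backslash G/K)\to C^*_r(K\backslash G/K)$ with the \emph{single} half-interval $\{\varphi_{\gamma_o(s)}\mid s\in[0,\tfrac12)\}$. Once one knows the non-tempered spherical spectrum sits on just one side of $\sigma_{C^*_r(K\backslash G/K)}(\mu_\kappa)$, the K-theory argument you describe goes through unchanged: the kernel of the restriction map has trivial K-theory, so the spectrum of $\mu_\kappa$ in $C^*_\mu(K\backslash G/K)$ cannot have gaps above the reduced spectral radius and must be an interval, which then matches $\sigma_{C^*_{L^{p+}}(K\backslash G/K)}(\mu_\kappa)$ for a unique $p$. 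Your remaining ingredients (Kunze--Stein, K-amenability, $\widehat{G}\setminus\widehat{G}_r\subset(\widehat{G}_K)_1$, and \cite[Lemma~3.3]{siebenand}) are exactly what the paper uses.
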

\begin{proof}
	The group $G$ acts on a homogeneous tree of degree $q+1$. To be more
	precise, $G$ is a double cover of the projective special linear group
	$\overline{G}=\mathrm{PSL}(2,\mathbb{Q}_q)$, which
	is known to be a non-compact, closed subgroup of the automorphism
	group of the homogeneous tree $T_q$ of degree $q+1$ (see \cite[Appendix 5)]{MR1152801}. It is also known that
	$\overline{G}$ acts transitively on the boundary of $T_q$
	(see \cite[p.~133]{MR1152801}).
	In particular, $\overline{G}$ is a Kunze-Stein group. It also admits Gelfand pairs. Indeed, the group
		\begin{align*}
			K:=\mathrm{SL}(2,\mathbb{Z}_q) = \left\{ \begin{pmatrix}
			a & b \\ 
			c & d
			\end{pmatrix} \in \mathrm{SL}(2,\mathbb{Q}_q)\; \middle| \;
			a,b,c,d\in \mathbb{Z}_q \right\},
		\end{align*}
		where $\mathbb{Z}_q\subset \mathbb{Q}_q$ denotes the ring of integers, is a maximal compact subgroup of $G$, and the image $\overline{K}$ of
		$K$ under the canonical quotient map $G\to \overline{G}$ gives a
		stabiliser group of some vertex $o$ of $T_q$ (see \cite[p. 133]{MR1152801}).
		
		Hence, $(\overline{G},\overline{K})$ is a Gelfand pair, from which it follows that $(G,K)$ is a Gelfand pair as well.
		
		It now follows from the classification of the irreducible unitary group
		representations of $G$ that  $\widehat{G} \setminus \widehat{G}_r \subset (\widehat{G}_K)_1$ (see e.g. \cite{gelcprimefandAutomorphicFunctionsTheory1963} or
		\cite{sallyIntroductionAdicFields1998}). Also, it is straightforward to verify that $(\widehat{\overline{G}}_{\overline{K}})_1 = (\widehat{G}_K)_1$. 
		
		Futhermore, the	restriction of every positive definite, spherical function for $(\mathrm{Aut}(T_q),\mathrm{Aut}(T_q)_o)$
		to $\overline{G}$ is a positive definite, spherical function for $(\overline{G},\overline{K})$.
		
		Combined with the results in \cite[Section 14]{sallyIntroductionAdicFields1998}
		(see also \cite[p.~80]{choucrounAnalyseHarmoniqueGroupes1994}),
		it is straightforward to verify that the kernel of the map
		$q \vert \colon C^*(K\backslash G/K)\to C^*_r(K\backslash G/K)$
		corresponds to
		$\lbrace \varphi_{\gamma_o(s)}\mid s\in [0,\frac{1}{2})\rbrace$.
		The theorem thus follows by arguments analogous to the arguments from the proof of the 
		previous theorem.
\end{proof}
\begin{rmk}
The case of groups $G$ not acting transitively on $V(T)$ requires further investigation. In this case it appears plausible that besides the above mentioned group $C^*$-algebras, there are also other group $C^*$-algebras coming from $G$-invariant ideals of the Fourier-Stieltjes algebra $B(G)$.
\end{rmk}

We end this section with a result (Theorem \ref{thm:star_iso}) indicating the subtle nature of canonical (non)-${}^*$-isomorphism of (exotic) group $C^*$-algebras. Indeed, it shows that even though we may have a ``continuum of exotic group $C^*$-algebras'', in the sense that they are pairwise not canonically ${}^*$-isomorphic, these algebras are still abstractly ${}^*$-isomorphic. 

\begin{proof}[Proof of Theorem \ref{thm:star_iso}]
Let $G$ be a non-compact, closed subgroup of $\mathrm{Aut}(T)$ that acts transitively on $\partial T$ and that satisfies Tits' independence property. Let $o\in V(T)$ be any vertex and $K= G_o$.

	By Theorem \ref{GA:thm:decomposition_grp_alg_auto_grp}, we have
	$C^*(G) = C^*(G)_{\mathrm{disc}} \oplus C^*(G,K)$, where
	the spectrum of $C^*(G)_{\mathrm{disc}}$ consists of $L^2$-representations.
	For every  group $C^*$-algebra $C^*_\mu(G)$, we have a similar
	decomposition $C^*_\mu(G) = C^*_\mu(G)_{\mathrm{disc}}\oplus C^*_\mu(G,K)$
	such that the canonical quotient map
	$q\colon C^*(G) \to C^*_\mu(G)$ transfers
	$C^*(G)_{\mathrm{disc}}$ to $C^*_\mu(G)_{\mathrm{disc}}$
	and $C^*(G,K)$ to $C^*_\mu(G,K)$, i.e.~we have $q(C^*(G)_{\mathrm{disc}})=
	C^*_\mu(G)_{\mathrm{disc}}$ and $q(C^*(G,K))
	=C^*_\mu(G,K)$. Note that $q$ induces a *-iso\-mor\-phism
	from $C^*(G)_{\mathrm{disc}}$ to $C^*_\mu(G)_{\mathrm{disc}}$.
	
	Now let $C^*_\mu(G)$ be a group $C^*$-algebra whose dual space is
	a proper ideal in $B(G)$. Furthermore, let $s\colon C^*_\mu(G) \to C^*_r(G)$ be the canonical quotient map. A similar argument as above shows
	that $s$ induces a *-isomorphism from $C^*_\mu(G)_{\mathrm{disc}}$
	to $C^*_r(G)_{\mathrm{disc}}$. Furthermore,
	$s$ restricts to a surjective *-homomorphism $\colon C^*_\mu(G,K)\to C^*_r(G,K)$.
	
	It remains to show that $C^*_\mu(G,K)$ and $C^*_r(G,K)$ are *-isomorphic. By Remark \ref{GA:rem:GP_spherical_ideal_morita_eq}, the algebra $C^*_\mu(G,K)$
	is Morita-equivalent to $C_\mu^*(K\backslash G/K)$ and 
	$C^*_r(G,K)$ is Morita-equivalent to $C^*_r(K\backslash G/K)$.
	In particular, it follows that $C^*_\mu(G,K)$ and $C^*_r( G,K)$ are	separable continuous trace $C^*$-algebras (see e.g. \cite[Corollary IV.1.4.20]{MR2188261}).
	Since $C_\mu^*(G)^*$ is an ideal of $B(G)$, every
	irreducible unitary group representation of $G$ that extends to $C^*_\mu(G)$ is
	infinite dimensional.
	
	From \cite[Corollary IV.1.7.22]{MR2188261}, it follows that
	$C^*_\mu( G,K)$ and $C^*_r( G,K)$ are stable $C^*$-algebras.
	An argument similar to that from the proof of the previous theorems shows
	that the Gelfand spaces $\Delta(C_\mu^*(K\backslash G/K))$ and 
	$\Delta(C_r^*(K\backslash G/K))$ are homeomorphic (both are perfect,
	compact intervals).
	From the Dixmier-Douady classification of continuous trace $C^*$-algebras, it follows that $C^*_\mu( G,K)$ and $C^*_r( G,K)$ are Morita-equivalent (see \cite[Theorem IV.1.7.11]{MR2188261}).
	This implies that $C^*_\mu( G,K)$ and $C^*_r(G,K)$
	are stably *-isomorphic, and since $C^*_\mu( G,K)$ and $C^*_r( G,K)$ are stable, they are actually *-iso\-mor\-phic.
\end{proof}

\end{document}